\documentclass[12pt,a4paper]{amsart}

\usepackage{graphicx} 
\usepackage{dcolumn} 
\usepackage{bm} 
\usepackage{color}
\usepackage[T1]{fontenc}
\usepackage{mathptmx}

\usepackage{amsthm}
\usepackage{amsfonts}
\usepackage{fancyhdr}
\usepackage{mathrsfs}
\usepackage{latexsym}
\usepackage{graphicx}
\usepackage{amsmath}
\usepackage{epstopdf}
\usepackage{enumitem}

\usepackage{hyperref}

\headheight=8pt \topmargin=0pt
\textheight=624pt \textwidth=432pt
\oddsidemargin=18pt \evensidemargin=18pt

\newtheorem{definition}{Definition}[section]
\newtheorem{lemma}{Lemma}[section]

\newtheorem{corollary}{Corollary}[section]

\newcommand{\ma}{\begin{pmatrix}}
\newcommand{\am}{\end{pmatrix}}

\newcommand{\be}[1]{\begin{equation} \label{#1}}
\newcommand{\ee}{\end{equation}}
\newcommand{\pdpd}[2]{\frac{\partial #1}{\partial #2}}
\newcommand{\dede}[2]{\frac{{\operatorname{d}} #1}{{\operatorname{d}} #2}}

\newcommand{\ii}{\mathrm{i}}
\newcommand{\R}{\mathbb{R}}
\newcommand{\C}{\mathbb{C}}

\renewcommand{\Im}{\operatorname{Im}}
\renewcommand{\Re}{\operatorname{Re}}

\def \iy{\infty}
\def \sm {\setminus}

\def \vphi {\varphi}

\newcommand{\gM}{\mathfrak{M}}

\def \bF{{\bf F}}
 \def\bH{{\bf H}} 
\def \bM{{\bf M}}

\def \bG{{\bf G}}

\def \bA{{\bf A}}
\def \bB{{\bf B}}
\def \bC{{\bf C}}

\def \bm1{{\bf m}_1}

\def \cK{\mathcal{K}}
\def \cR{\mathcal{R}}

\def \cP{{\mathcal P}}
\def \cS{{\mathcal S}}
\def \cN{{\mathcal N}}

\def\lb{\label}

\def\mR{{\mathfrak{R}}}

\def \mB{{\mathscr B}}

\def \vt{\vartheta}
\def \vp{\varphi}

\def \hm{\mu}
\def \s{\sigma}

\def \qq{\quad}

\def\[{\begin{equation}}
\def\]{\end{equation}}

\def\qq{\quad}

\theoremstyle{plain}
\newtheorem{theorem}{Theorem}[section]
\theoremstyle{definition} 
\newtheorem{remark}[theorem]{Remark}

\begin{document}

\markboth{M. V. de Hoop, and A. Iantchenko}{Wavenumber resonances for the Rayleigh system}

\title{Analysis of wavenumber resonances\\ for the Rayleigh system in a half space}

\author{Maarten V. de Hoop}
\address{Simons Chair in Computational and Applied Mathematics and Earth Science, Rice University, Houston, TX 77005}
\email{mdehoop@rice.edu}
\author[Alexei Iantchenko]{Alexei Iantchenko${}^1$
}
\footnotetext[1]{Corresponding author}
\address{Department of Materials Science and Applied Mathematics, Faculty of Technology and Society, Malm\"{o} University, Malm\"{o}, Sweden}
\email{ai@mau.se}

\begin{abstract}
We present a comprehensive analysis of wavenumber resonances or leaky modes associated with the Rayleigh operator in a half space containing a heterogeneous slab, being motivated by seismology. To this end, we introduce Jost solutions on an appropriate Riemann surface, a boundary matrix and a reflection matrix in analogy to the studies of scattering resonances associated with the Schr\"{o}dinger operator. We analyze their analytic properties and characterize the distribution of these wavenumber resonances. Furthermore, we show that the resonances appear as poles of the meromorphic continuation of the resolvent to the nonphysical sheets of the mentioned Riemann surface as expected.
\end{abstract}

\keywords{Surface waves; Rayleigh system;   Jost function; leaky modes; scattering resonances; Cartwright class.}


\subjclass{34L20, 34B24, 34B40, 34M05, 35A24, 35B08, 35B34, 35B40, 35P20, 35Q86, 47A10, 47A75, 74J15, 81Q10,  86A15.}

\maketitle

\section{Introduction}

Scattering resonances are associated with oscillations and rates of decay of solutions of a wave equation, and exist in certain settings. These settings determine a Riemann surface. Scattering resonances are the poles of a meromorphic continuation of the resolvent of the time-Fourier-transformed wave operator, in frequency $\omega$, say, to the unphysical sheets of the Riemann surface, and appear as Breit-Wigner bumps on the graph of the scattering phase or Green's function. The mathematical theory of scattering resonances is relatively new but already well developed. A modern review is given in a recent book by Dyatlov and Zworski \cite{DyatlovZworski2022}. In the analysis of inverse problems for the Schr\"{o}dinger operator, resonance frequencies have been used as the only data. It is indeed the inverse problem for the Rayleigh operator that motivated the present study.

In elasticity, and specifically seismology, for the Rayleigh operator, the notion of resonances is slightly different from the one of resonances associated with the Schr\"{o}dinger operator. We refer to these as wavenumber resonances while seismologists prefer the term leaky modes. The occurrence of leaky modes has been noted more than 60 years ago. Yet, progress in detecting and exploiting them in studying Earth's interior -- rather than exploiting Love and Rayleigh modes or surfaces waves \cite{Rayleigh1885} -- has been very limited, in part due to the absence of their comprehensive analysis. We present such an analysis here in dimension three, which opens the way for studying the corresponding inverse problem. More precisely, we study a wavenumber resonances problem associated with the Rayleigh system in the isotropic case, at a fixed frequency $\omega$, on a flat elastic half space. The coefficients in the system depend on the boundary normal coordinate and vary in a slab of finite thickness beneath a traction-free surface. We refer to the half space below the slab as the lower half space. Complications in the analysis arise from the fact that isotropic elasticity supports two distinct wave speeds.

Assuming isotropy when the stiffness tensor is determined by two Lam\'{e} parameters, we introduce Jost solutions, which appear as complex plane waves in the lower half space, as \textit{P}- and \textit{S}-polarized displacements and dependent on a spectral parameter, $\xi$ say, originating as the norm of the dual to the boundary or surface coordinates. The natural object for studying resonances is the boundary matrix representing tractions induced by the Jost solutions. The determinant of the boundary matrix is known as the Rayleigh determinant. The Rayleigh determinant as a function of $\xi$ has as its complex roots the resonance frequencies. The boundary matrix determines the Jost function or spectral data. In previous work \cite{MdHoopIantchenko2022}, we proved that the Lam\'{e} parameters in the slab can be uniquely recovered from this Jost function assuming that the density is known. In a follow-up paper we will present the corresponding inverse wavenumber resonances problem. A reflection matrix for the Rayleigh system, which can be expressed in terms of the boundary matrix, can be identified as the analogue of the scattering matrix in the inverse scattering problem for the Schr\"{o}dinger equation.

The Riemann surface is introduced and based on the Jost solutions restricted to the lower half space with constant Lam\'{e} parameters.
We analyze the analytic properties of the Jost solutions within the slab, in $\xi$, on this Riemann surface and study their asymptotic behaviors. Furthermore, we study the properties of the reflection and boundary matrices through identities, and determine how these matrices are related. We note that the reflection matrix does not determine the boundary matrix~\footnote{In comparison, in the case of the Schr\"{o}dinger operator, the scattering matrix determines the Jost function.}. We then introduce a function, $F$ say, as a product of the Rayleigh determinant and its three ``conjugates'' associated to three unphysical sheets of the mentioned Riemann surface (exact definition is given in (\ref{functionF})) and prove that $F$ is an entire function of exponential type belonging to a Cartwright class. The zeros of $F$ are projections of the wavenumber resonances on the complex plane. Then, using the known properties of zeros of Cartwright class functions, we establish the distribution of the wavenumber resonances.

We let $\cK$ denote the cut plane being the natural projection of the mentioned Riemann surface, $\mR$, defined in Section~\ref{s-R}. We let $\cN(r,F)$ denote the number of zeros of the mentioned function $F(-\ii z)$, $-\ii z \in \cK$, having modulus  $\leq r$, each zero being counted according to its multiplicity. With the Cartwright character of $F(-\ii z)$ proved in Section~\ref{ss-A} and general properties of the zeros of an entire function from a Cartwright class summarized in Section~\ref{ss-C}, we obtain the following main results. First, the wavenumber bound states and resonances, $\xi_n = -\ii z_n$, satisfy 
\begin{equation} \label{xisumcond}
   \sum \frac{|\Im \xi_n |}{|\xi_n|^2} < \infty .
\end{equation}
Second,
\[\label{Weylas}
   \cN(r,F) = \frac{16 H \, r}{\pi} (1 + o(1)) ,\qq r \rightarrow \infty ,
\]
where $H$ signifies the thickness of the slab. This can be compared with the analogous result for the Schr{\"o}dinger operator with compactly supported potential on the half line \cite[Theorem~2.1]{Korotyaev2004}. Third,  under the condition that Lam{\'e} moduli $\mu, \lambda \in C^N$ for some $N$ sufficiently large, each wavenumber resonance, $\xi_n$, $n = 1,2,\ldots$, with $\Re\xi_n < 0$ satisfies
\begin{equation} \label{3.14abisbis}
   \left|\xi_n^2 \left(\xi_n^{12} - c_2 \xi_n^{10} + c_4 \xi_n^{8}
   - c_6 \xi_n^{6} + c_8 \xi_n^4 - c_{10} \xi_n^2 + c_{12}\right)\right|
   \leq C_1 e^{-8 H \Re\xi_n}
\end{equation}
for some constants $c_{2j} \in \R$, $j = 1,\ldots 6$. As a consequence, for any $A > 0,$ there are only finitely many resonances in the region
\[\label{forbreg}
   0 > \Re\xi \geq -A - \frac{7}{4H} \log|\Im\xi| ,
\] 
specifying the forbidden domain for the Rayleigh wavenumber resonances. It is well known that for \\ Schr{\"o}dinger and Dirac operators with compactly supported potential, the scattering resonances in frequency ($\omega$) lie below a logarithmic curve in $\C_-$, see \cite[Corollary~2.3]{Korotyaev2004}, \cite[Theorem~1.3]{IantchenkoKorotyaev2014a} and \cite[Theorem~2.7]{IantchenkoKorotyaev2014b}. Bound (\ref{3.14abisbis}) shows that wavenumber resonances (identifying $\xi$ with $-\ii \omega$) with $\Re\xi_n < 0$ lie to the left of the logarithmic curve for $\Re\xi < 0$.

Furthermore, we derive an explicit formula for the kernel of the resolvent (Green's function) of the Rayleigh operator in the half space. Using the analytic properties of the Jost solutions on the Riemann surface, we define the analytic continuation of the resolvent from the physical sheet to the whole Riemann surface, and observe that the poles of the analytic continuation of the resolvent coincide with the zeros of the Rayleigh determinant, that is, the wavenumber resonances. 

For studies of leaky modes in seismology we refer to \cite{Phinney1961, Phinney1961b, Phinney1961c, Gupta1970, Chapman1972, Watson1972, SchroderScott2001, HarrisAchenbach2002}. Pilant \cite{Pilant1972} and Haddon \cite{Haddon1984, Haddon1986, Haddon1987} presented a complex frequency, complex wavenumber analysis of leaky modes. Lodge, Steblov and Gubbins \cite{Lodgeetal1999}, identified the fundamental (PL) leaky mode. Wu and Chen \cite{WuChen2017} carried out computations of leaky modes for anomalous, layered models. Li, Shi, Ren and Chen \cite{Lietal2021} succeeded in extracting multiple leaky mode dispersion observations from ambient noise cross-correlation data.

Concerning prior work, we mention the results  on distribution of scattering resonances on the real line by Zworski in \cite{Zworski1987}. Korotyaev \cite{Korotyaev2004} established the analytic properties of Jost solutions and distribution of resonances for one-dimensional scalar Schr{\"o}dinger operators on the half line with Dirichlet boundary condition, and on the line  \cite{Korotyaev2005}, leading to complete solution of inverse resonance problem with characterization. These problems are similar to the corresponding problem for Love wavenumber resonances and substantially simpler. For the Schr{\"o}dinger operator with compactly supported potential on a half line with Dirichlet boundary condition, the Jost function is an entire scalar function of frequency ($\omega$) on the complex plane and belongs to a Cartwright class. Cohen and Kappeler \cite{CohenKappeler1985} and Christiansen \cite{Christiansen2005} analyzed the case of steplike potentials.

For an un-formal  introduction to semiclassical inverse spectral and resonance problems in seismology  see the lectures given by the second author \cite{Iantchenko2022}, where in the last two chapters the complexity of the spectral and resonance problems for elastics medium is explained in more details.
\section{Rayleigh system}
\label{sec:2}

We let $Z$ be the boundary normal coordinate, $Z \in \mathbb{R}_{\le 0}$, and $\xi$ be the dual to the coordinates in the boundary of a half space. We consider the operator associated with Rayleigh waves in isotropic elastic media \cite{dHINZ},
\begin{equation} \label{Hamiltonian_Rayleigh}
   H_0(|\xi|)
     \left(\!\!\begin{array}{c} \varphi_1 \\[0.4cm] \varphi_3
                            \end{array}\!\!\right)
   = \left(\!\!\begin{array}{c} \displaystyle
     -\frac{\partial}{\partial Z} \left(\hat{\mu}
     \frac{\partial\varphi_1}{\partial Z}\right)
     - \ii |\xi| \left(\frac{\partial}{\partial Z}(\hat{\mu} \varphi_3)
     + \hat{\lambda} \frac{\partial}{\partial Z}\varphi_3\right)
     + (\hat{\lambda} + 2 \hat{\mu}) |\xi|^2 \varphi_1
   \\[0.25cm] \displaystyle
   -\frac{\partial}{\partial Z} \left((\hat{\lambda} + 2 \hat{\mu})
   \frac{\partial\varphi_3}{\partial Z}\right)
   - \ii |\xi| \left(\frac{\partial}{\partial Z}(\hat{\lambda} \varphi_1)
   + \hat{\mu} \frac{\partial}{\partial Z} \varphi_1\right)
   + \hat{\mu} |\xi|^2 \varphi_3 \end{array}\right) ,
\end{equation}
where
$$
   \hat{\lambda} = \frac{\lambda}{\rho} ,\qq
   \hat{\mu} = \frac{\mu}{\rho}
$$
with $\lambda$, $\mu$ denoting the Lam\'{e} parameters and $\rho$ the density of mass. In this equation, $H_0$ should be viewed as an operator-valued principal symbol. We assume that the parameters only depend on $Z$. We will use the notation $$\varphi = \ma \varphi_1 \\[0.4cm] \varphi_3 \am
.$$ We introduce the system of equations,
\begin{equation} \label{Rayleighsystem}
   (H_0(|\xi|) - \omega^2 I) \varphi = 0 ,\quad Z < 0 ,
\end{equation}
supplemented with the Neumann or traction-free boundary conditions at $Z = 0$,
\begin{align}
  a(\varphi) :=&\ \ii \hat{\lambda}(0^- |\xi| \vphi_1(0^-)
  + (\hat{\lambda}(0^-) + 2\hat{\mu}(0^-)) \pdpd{\vphi_3}{Z}(0^-)
  = 0 ,
\label{Rayleighboundary1}\\
  b(\varphi) :=&\ \ii |\xi|\hat{\mu}(0^-) \vphi_3(0^-)
  + \hat{\mu}\pdpd{\vphi_1}{Z}(0^-) = 0 .
\label{Rayleighboundary2}
\end{align}
We set $\rho \equiv 1$ and simplify the notation,
$$
  \lambda = \hat{\lambda} ,\ \mu = \hat{\mu} .
$$
From now on, we will use $\xi$ to denote both $|\xi| \in \R_+$ and its values in $\C$ following analytic continuation. Then we write $H_0 = H_0(\xi)$. We note that $H_0(\xi)$ only corresponds to the physical system for $\xi \in \R_+$. Starting from the elastic wave equation in dimension three, the equation for $\varphi_2$ decouples and describes Love waves.

\medskip\medskip

\noindent
We also consider the extension of $H_0(\xi)$ to $Z \in \R$. We write this extension as $\mathfrak{H}_0 = \mathfrak{H}_0(\xi)$, $\xi \in \C$; the extension ignores the boundary conditions. It is introduced by evenly extending its parameters, $\lambda$ and $\mu$, to $Z > 0$. 

\medskip\medskip

\noindent
We consider the case of an inhomogeneous isotropic elastic slab of thickness $H$ bonded to a homogeneous isotropic elastic half space with Lam\'e parameters $\lambda_0$ and $\mu_0$. We assume that the layer's Lam\'e parameters, $\lambda$ and $\mu$, are $C^3((-\infty,0])$ (three times continuously differentiable on $(-\infty,0]$) and are the constants $\lambda_0$ and $\mu_0$ for $Z<-H.$
 This is needed in the later analysis based on the Markushevich transform. We assume that
$$
   \mu \ge \alpha_0 > 0 ,\quad 2 \mu + 3 \lambda \ge \beta_0 > 0 ,
$$
signifying the \textit{strong ellipticity} condition \cite{Chen1991} as this appears in the existence and uniqueness of solutions of the boundary value problem for time-harmonic elastic waves.

To obtain a real-valued form of the Rayleigh system (\ref{Rayleighsystem}), we make the substitution following \cite{Stickler1986},
$$
   \psi_1 = \varphi_1 ,\qq  \psi_2 = -\ii \vp_3 ,\qq
   \text{writing}\qq \psi = \ma \psi_1 \\[0.4cm] \psi_2 \am .
$$
Then
\begin{equation} \lb{(1a)}
   (\hat{H}_0(\xi) - \omega^2 I) \psi
   := (P \psi')' + \xi (N \psi'
       - \left(N^{\rm T} \psi\right)')
       + \left(\omega^2 I - \xi^2 M\right) \psi = 0 ,
\end{equation}
where
\begin{equation}\lb{(1b)}
   P = \ma \mu & 0 \\ 0 & \lambda+2\mu \am ,\qq
   M = \ma \lambda+2\mu & 0 \\ 0 & \mu\am ,\qq
   N = \ma 0 & -\lambda \\ \mu & 0 \am
\end{equation}
and primes indicate derivatives with respect to $Z$. By the mentioned extension, we then introduce $\hat{\mathfrak{H}}_0(\xi)$.

\begin{remark}\label{r_ext}
Due to the form of (\ref{(1a)}) with $\hat{H}_0(\xi)$ replaced by $\hat{\mathfrak{H}}_0(\xi)$, it follows that if $\psi(Z,\xi)$ is a solution, then $\widetilde{\psi}(Z,\xi) := \psi(-Z,-\xi)$ is also solution. We note that this property is artificial and does not reflect the physics of the original elastic system from which the Rayleigh system is deduced.
\end{remark}

\noindent
The traction at the boundary takes the form
\begin{align}
   \hat{a}(\psi) &:= \ii \left(\lambda(0^-) \xi \psi_1(0^-)
        + (\lambda(0^-) + 2\mu(0^-)) \pdpd{\psi_2}{Z}(0^-)\right) ,\label{tract1}
\\
   \hat{b}(\psi) &:= -\xi \mu(0^-) \psi_2(0^-) + \mu(0^-)
                    \pdpd{\psi_1}{Z}(0^-) .\label{tract2}
\end{align}

\subsection*{Jost solutions in the lower half space}

The polarized Jost solutions of the Rayleigh system $(\hat{H}_0(\xi) - \omega^2) \psi = 0$, where $\hat{H}_0(\xi)$ is defined  in (\ref{(1a)}), in the lower half space, $Z < -H$, are given by
\begin{align}
   &\psi_{P,0}^\pm = \ma \psi_{P,0;1}^\pm \\ \psi_{P,0;2}^\pm \am
               = \ma -\xi \\ \pm \ii q_P\am e^{\pm \ii Z q_P} ,
\\
   &\psi_{S,0}^\pm = \ma \psi_{S,0;1}^\pm \\ \psi_{S,0;2}^\pm \am
               = \ma \pm \ii q_S  \\ - \xi  \am e^{\pm \ii Z q_S} .
\end{align}
Here,
\[\label{eq:quasimomenta}
   q_P(\xi) = \sqrt{\frac{\omega^2}{\sigma_0} - \xi^2}
\quad\text{and}\quad
   q_S(\xi) = \sqrt{\frac{\omega^2}{\hm_0} - \xi^2} ,
\]
in which $\sigma_0 := \lambda_0 + 2 \hm_0$. One refers to $q_P$ and $q_S$ as quasimomenta.

\begin{definition}\lb{Jostsol} We define Jost solutions of the Rayleigh equation (\ref{(1a)}), $$\Psi^{\pm} = [\psi_P^{\pm}\,\,\psi_S^{\pm}],$$
by matching $\Psi^{\pm}$ with $\Psi_0^{\pm}$ for $Z \le -H$, effectively imposing a radiation condition.
\end{definition}

\noindent
This definition uniquely determines the Jost solutions. We note that their boundary values will not yield a vanishing traction. 

From the boundary values of the ``downgoing'' Jost solutions, we form the \textit{boundary matrix}, $\mB$, given by
\begin{equation}
   \mB(\xi) = \ma \hat{a}(\psi_P^-)(\xi) & \hat{a}(\psi_S^-)(\xi) \\
             \hat{b}(\psi_P^-)(\xi) & \hat{b}(\psi_S^-)(\xi) \am
\end{equation}
consisting of the associated boundary tractions, and
\begin{equation} \label{eq:Rdet}
   \Delta(\xi) := \det\mB(\xi)
\end{equation}
is called the \textit{Rayleigh determinant}. The zeros of $\Delta(\xi)$ on the Riemann surface, $\cR$, defined in the next section are the \textit{wavenumber resonances}. We note that wavenumber resonances on the physical sheet, $\cR_{++}$ introduced below, correspond to the (physical) bound states.

The Jost solutions can be extended to $Z > 0$ as solutions of $(\hat{\mathfrak{H}}_0(\xi) - \omega^2 I) \psi = 0$ noting that the Lam\'{e} parameters will not be differentiable at $Z = 0$. However, the later analysis based on the Markushevich transform will not require this extension and, hence, this is not an issue.

\section{Branch cuts and Riemann surface}\label{s-R}

The Riemann surface playing a role in analyzing the Rayleigh system is determined by $q_S$ and $q_P$ in (\ref{eq:quasimomenta}). We denote by $\sqrt{z}$ the principal branch of the square root that is positive for $z>0$ and with the cut along the negative real axis.  We write $\xi \in\C$ for analytic continuation of $|\xi| \in \R_+$. Let $\omega>0$ be fixed. We define $q_S(\xi)$ by choosing the branch of $q_S(\xi) = \sqrt{\frac{\omega^2}{\hm_0} - \xi^2} = \ii \sqrt{\xi^2 - \frac{\omega^2}{\hm_0}}$ with
$$
   q_S(\xi) \in \ii \R_+\ \text{for real}\
   \xi > \frac{\omega}{\sqrt{\hm_0}}\qq\text{and}\qq
   q_S(\xi) \in \ii \R_-\ \text{for real}\
         \xi < - \frac{\omega}{\sqrt{\hm_0}} .
$$
Then 
$$
   \Im q_S(\xi) > 0\ \text{for}\
   \Re \xi > \frac{\omega}{\sqrt{\hm_0}}\qq\text{and}\qq
   \Im q_S(\xi) < 0\ \text{for}\ \Re \xi < -\frac{\omega}{\sqrt{\hm_0}} .
$$
We note that $\Im q_S(\xi) = 0$ for $\xi \in
\left[-\frac{\omega}{\sqrt{\hm_0}},\frac{\omega}{\sqrt{\hm_0}}\right]
\cup \ii \R$. We let
$$
   \cK_S := \C \setminus
   \left(\left[-\frac{\omega}{\sqrt{\hm_0}},
     \frac{\omega}{\sqrt{\hm_0}}\right] \cup \ii \R\right) .
$$
Then the map $q_S :\ \cK_S \to \cK_S ,\ \xi \to
\sqrt{\frac{\omega^2}{\hm_0} - \xi^2}$ is conformal and has the
asymptotic expansion,
\begin{equation} \label{limitS}
   q_S(\xi) = \ii \xi - \frac{\ii \omega^2}{2\hm_0\xi}
    + {\mathcal O}\left(\frac{1}{|\xi|^2}\right)\qq \text{as}\qq
    |\xi| \to \infty .
\end{equation}
We have
\begin{align}
   &q_S\left(\left[-\frac{\omega}{\sqrt{\hm_0}},
     \frac{\omega}{\sqrt{\hm_0}}\right]\right)
   = \left[-\frac{\omega}{\sqrt{\hm_0}},
     \frac{\omega}{\sqrt{\hm_0}}\right] ,
\label{slitmap} \\
   &q_S\left(\ii \R\right)
   = \left(-\infty,-\frac{\omega}{\sqrt{\hm_0}}\right] \cup
      \left[\frac{\omega}{\sqrt{\hm_0}},\infty\right) .
\label{comslitmap}
\end{align}
More precisely,
$$
   q_S(\ii \R_\pm) = \R_\mp \setminus
   \left(-\frac{\omega}{\sqrt{\hm_0}},
   \frac{\omega}{\sqrt{\hm_0}}\right) ,\qq
   q_S\left(\R_\pm \setminus
   \left(-\frac{\omega}{\sqrt{\hm_0}},
   \frac{\omega}{\sqrt{\hm_0}}\right)\right) = \ii \R_\pm .
$$
In particular, we have
\[\label{imq}
   \pm \Im(q_S(\xi)) >  0\qq \text{iff}\qq
   \xi \in \cK_{S,\pm} := \left\{\xi \in \C \setminus
   \left[-\frac{\omega}{\sqrt{\hm_0}},
     \frac{\omega}{\sqrt{\hm_0}}\right]\ :\ \pm\Re\xi > 0 \right\} .
\]
The Riemann surface for $q_S(\xi)$ is obtained by joining the upper
and lower rims of two copies of $$\C \sm
\left[\left(-\infty,-\frac{\omega}{\sqrt{\hm_0}}\right] \cup
\left[\frac{\omega}{\sqrt{\hm_0}},+\infty\right)\right]$$ cut along 
$\left(-\infty,-\frac{\omega}{\sqrt{\hm_0}}\right] \cup
      \left[\frac{\omega}{\sqrt{\hm_0}},+\infty\right)$ in the usual
        (crosswise) way.

Instead of this two-sheeted Riemann surface, it is more convenient to
work on the cut plane $\cK_S$ and half planes $\cK_{S,\pm}$ such that
$q_S\left(\cK_{S,\pm}\right) = \C_\pm := \{z \in \C\ :\ \pm \Im z >
0\}$. Let $\mathfrak{g}_+$ denote the upper rim of the cut
$\left[-\frac{\omega}{\sqrt{\hm_0}},\frac{\omega}{\sqrt{\hm_0}}\right]
\cup \ii \R$. The ``upper'' (physical) sheet for $q_S$ corresponds to
\begin{equation} \label{cKS}
   \left\{\xi \in \cK_S\ :\ \Re \xi > 0 \right\} \cup \mathfrak{g}_+ ,
\end{equation}
which we also write as $\cK_{S,+}$ by abuse of notation. We collect
below the following basic properties:
\begin{equation} \label{k-properties10}
\begin{aligned}
   \mbox{for}\ \xi \in \C \setminus
   \left(\left[-\frac{\omega}{\sqrt{\hm_0}},
     \frac{\omega}{\sqrt{\hm_0}}\right] \cup \ii \R\right) :&
   \qq q_S(\xi) = -q_S(-\xi) = -\overline{q_S(\overline{\xi})} ,
\\
   \mbox{for}\ \xi \in \left[-\frac{\omega}{\sqrt{\hm_0}},
     \frac{\omega}{\sqrt{\hm_0}}\right] :&
   \qq q_S(\xi \pm \ii 0)
   = \mp\left|\frac{\omega^2}{\hm_0}-\xi^2\right|^{1/2} ,
\\
   \mbox{for}\ \xi \in \ii\R :&
   \qq q_S(\xi \pm 0)
   = \mp\left|\frac{\omega^2}{\hm_0}-\xi^2\right|^{1/2} ,
\\
   \mbox{for}\ \xi \in \left(-\infty,
   -\frac{\omega}{\sqrt{\hm_0}}\right] \cup
      \left[\frac{\omega}{\sqrt{\hm_0}},+\infty\right) :&
   \qq q_S(\xi)
   = \pm \ii \left|\xi^2 - \frac{\omega^2}{\hm_0}\right|^{1/2} ,
   \qq \pm \xi \ge \frac{\omega}{\sqrt{\hm_0}} .
\end{aligned}
\end{equation}
Moreover, for $\xi$ in the cuts, we have
\begin{equation} \label{oncuts}
\begin{aligned}
   \mbox{for}\ \xi \in \left[0,\frac{\omega}{\sqrt{\hm_0}}\right] :&
   \qq q_S(\xi) = -\left|\frac{\omega^2}{\hm_0}-\xi^2\right|^{1/2} ,
\\
   \mbox{for}\ \xi \in \left[-\frac{\omega}{\sqrt{\hm_0}},0\right] :&
   \qq q_S(\xi) = \left|\frac{\omega^2}{\hm_0}-\xi^2\right|^{1/2} ,
\\
   \mbox{for}\ \xi \in \ii\R_\pm :&
   \qq q_S(\xi) = \mp\left|\frac{\omega^2}{\hm_0}-\xi^2\right|^{1/2} .
\end{aligned}
\end{equation}

By replacing $\mu_0$ with $\sigma_0 := \lambda_0 + 2 \hm_0$ we
get analogous properties for the quasimomentum
$$
   q_P(\xi) = \sqrt{\frac{\omega^2}{\sigma_0} - \xi^2}
   = \ii \sqrt{\xi^2 - \frac{\omega^2}{\sigma_0}} .
$$
$\cK_P$ and $\cK_{P,\pm}$ are defined in a manner similar to $\cK_S$
and $\cK_{S,\pm}$. We obtain the Riemann surface, $\cR$, for both
$q_P$ and $q_S$ by joining the Riemann surfaces for both quasimomenta
so that $q_P$ and $q_S$ are single-valued holomorphic functions of
$\xi$; $\cR$ is a four-fold cover of the complex plane. The sheets of
$\cR$,
$$
   {\cR} = {\cR}_{++} \cup {\cR}_{+-} \cup {\cR}_{-+} \cup {\cR}_{--}
   = \cup_{\sigma_1,\sigma_2} {\cR}_{\sigma_1,\sigma_2} ,\quad
   (\sigma_1,\sigma_2) = ({\rm sign} \Im q_P, {\rm sign} \Im q_S)
$$
are distinguished by the signs of the imaginary part of quasimomenta
$q_P$, $q_S$.

To a point $\xi \in \cR$ we may associate the two values $q_S(\xi)$,
$q_P(\xi)$ and can determine a mapping $\cR \to \cR$ by its action on
$q_S(\xi)$, $q_P(\xi)$. These mappings can be regarded as analogs of complex conjugation adapted to different sheets of the Riemann surface, and are used in order to define analytic continuation of certain identities initially defined on the branch cuts only. Thus, we define the mappings, $w_P$, $w_S$ and $w_{SP} :\ \cR \to \cR$
\begin{align}
   q_S(w_S(\xi)) = -q_S(\xi) ,&\qq q_P(w_S(\xi)) = q_P(\xi) ,
\label{S-jump}\\
   q_S(w_P(\xi)) = q_S(\xi) ,&\qq q_P(w_P(\xi)) = -q_P(\xi) ,
\label{P-jump}\\
   q_S(w_{SP}(\xi)) = -q_{S}(\xi) ,&\qq q_P(w_{SP}(\xi)) = -q_P(\xi). 
\label{SP-jump} 
\end{align}
These relations, between the sheets of the Riemann surface, map a
point $\xi \in\cR$ to another point in $\cR$ with the same projection,
$\Pi(\xi) \in \C$.

We identify $\cR_{++}$ where $\Im
q_P > 0$, $\Im q_S > 0$ with the physical (or ``upper'') sheet for
$q_S$ (cf.~(\ref{imq}-\ref{cKS})),
$$
   \cK_{S,\pm} = \left\{\xi \in \cK_S = \C \setminus
   \left(\left[-\frac{\omega}{\sqrt{\hm_0}},
     \frac{\omega}{\sqrt{\hm_0}}\right] \cup \ii \R\right)\ :\
   \Re \xi  > 0 \right\} .
$$
On $\cR_{++}$ we have $\Im q_P > \Im q_S$. Then
$$
   \Im q_P - \Im q_S = \Im (q_P - q_S)
   = \Im \frac{q_P^2 - q_S^2}{q_P + q_S}
   = -\omega^2 \frac{\lambda_0 + \mu_0}{(\lambda_0 + 2\mu_0) \mu_0}
     \Im \frac{1}{q_P + q_S} > 0 ,
$$
that is, $\Im (q_P + q_S)^{-1} < 0$. On the unphysical sheet, $\cR_{--}$, we have $\Im q_P < \Im q_S$. Then
$$
   \Im q_P - \Im q_S = -\omega^2
   \frac{\lambda_0 + \mu_0}{(\lambda_0 + 2 \mu_0) \mu_0}
   \Im \frac{1}{q_P + q_S} < 0 ,
$$
that is, $\Im (q_P + q_S)^{-1} > 0$. 

On the unphysical sheet, $\cR_{+-}$, where $\Im q_P > 0$ and $\Im q_S < 0$, we have $\Im q_P >
-\Im q_S$. Then
$$
   \Im q_P - \Im q_S = -\omega^2
   \frac{\lambda_0 + \mu_0}{(\lambda_0 + 2 \mu_0) \mu_0}
   \Im \frac{1}{q_P + q_S} > 0 ,
$$
that is, $\Im (q_P + q_S)^{-1} < 0$.

On the unphysical sheet, $\cR_{-+}$, where $\Im q_P < 0$ and $\Im q_S > 0$, we have $\Im q_S >
-\Im q_P$. Then
$$
   \Im q_P - \Im q_S = -\omega^2
   \frac{\lambda_0 + \mu_0}{(\lambda_0 + 2 \mu_0) \mu_0}
   \Im \frac{1}{q_P + q_S} < 0 ,
$$
that is, $\Im (q_P + q_S)^{-1} > 0$. We have the property,
$$
   \Re q_P \Im q_P = \Re q_S \Im q_S .
$$
For the later analysis, we introduce a function
\begin{multline} \label{eq:gammadef}
   \gamma(\xi) = \max\left\{
   \left[|\Im q_P(\xi)|-\Im q_P(\xi)\right],
   \frac12\left[|\Im (q_P(\xi) - q_S(\xi))|
     - \Im (q_P(\xi) - q_S(\xi))\right],\right.
\\
   \left.
   \frac12\left[|\Im (q_P(\xi) + q_S(\xi))|
     - \Im (q_P(\xi) + q_S(\xi))\right]\right\} .
\end{multline}
On $\cR_{++}$ we have $\Im q_P > \Im q_S > 0$ and, hence, $\gamma(\xi)
= 0$. On  $\cR_{--}$ we have $\Im q_P < \Im q_S < 0$, then
\begin{equation*}
   \gamma(\xi) = \max\left\{
   -2 \Im q_P(\xi),
   -\Im (q_P(\xi) - q_S(\xi)), \right.
   \left.
   -\Im (q_P(\xi) + q_S(\xi))\right\}
   \sim 2 |\xi|\quad\text{as}\ |\xi| \rightarrow\infty .
\end{equation*}
\noindent On $\cR_{+-}$ we have $\Im q_S < 0 < \Im q_P$ and, hence, $\gamma(\xi)
= 0$.  On $\cR_{-+}$ we have $\Im q_P < 0 < \Im q_S$, then
\begin{equation*}
   \gamma(\xi) = \max\left\{
   -2 \Im q_P(\xi),
   -\Im (q_P(\xi) - q_S(\xi)), \right.
   \left.
   -\Im (q_P(\xi) + q_S(\xi))\right\}
   \sim 2 |\xi|\quad\text{as}\ |\xi| \rightarrow \infty.
\end{equation*}


\section{Conjugation properties of the Jost solutions and the
  boundary matrix}

Here, we analyze the symmetry properties of the Jost solutions on the cuts of the complex plane introduced in the previous section. For a complex function, $f$ say, on the projection of the Riemann surface $\cR$ to the cut plane $\cK$ we define the map $f \to f^*$ with $f^*(\xi) = \overline{f(\overline{\xi})}$. We obtain

\begin{lemma}[Conjugation of Jost solutions]\label{l-conjJ}
On the Riemann surface, $\cR$, the following holds true
\begin{align}
   &\psi_P^\pm(Z,w_{P}(\xi)) = \psi_P^\pm(Z,w_{PS}(\xi)) = \psi_P^\mp(Z,\xi) ,
\label{symmetry-1}\\
   &\psi_S^\pm(Z,w_{S}(\xi)) = \psi_S^\pm(Z,w_{PS}(\xi)) = \psi_S^\mp(Z,\xi) .
\label{symmetry-2}
\end{align}
On the projection of the Riemann surface $\cR$ to the cut plane $\cK$, we have
\begin{equation}
 (\Psi^{\pm})^*(Z,\xi)
   = \Psi^{\pm}(Z,\xi),\qq \xi\in\cK .
\label{symmetryproj}
\end{equation}
On the branch cut, $\xi\in\left[-\frac{\omega}{\sqrt{\sigma_0}},\frac{\omega}{\sqrt{\sigma_0}}\right]$,
\begin{equation}\lb{Jost_real_cut}
   \overline{\Psi^\pm(Z,\xi)}=\Psi^\mp(Z,\xi)
\end{equation}
and on the branch cut, $\xi \in \ii\R$,
\begin{equation}\lb{Jost_im_cut} \overline{\Psi^\pm(Z,\xi)}=\Psi^\pm(Z,\overline{\xi})=\Psi^\pm(Z,-\xi).
\end{equation}
\end{lemma}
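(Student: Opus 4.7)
The lemma reduces to two observations. First, the Rayleigh equation (\ref{(1a)}) has real coefficients and depends on the spectral parameter only through its projection $\Pi(\xi) \in \C$; consequently, the sheet-preserving maps $w_P$, $w_S$, $w_{SP}$ and complex conjugation paired with $\xi \mapsto \overline{\xi}$ both send solutions to solutions of the same (or an easily identified) equation. Second, by Definition~\ref{Jostsol}, each Jost solution is uniquely pinned down by its explicit plane-wave form on $Z \le -H$. Every identity therefore reduces to a direct check on $\psi_{P,0}^\pm$ and $\psi_{S,0}^\pm$ in the lower half space, followed by propagation into the slab via uniqueness of the Cauchy problem for (\ref{(1a)}).

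To obtain (\ref{symmetry-1}), I would substitute (\ref{P-jump}) into the explicit form and read off $\psi_{P,0}^\pm(Z, w_P(\xi)) = (-\xi, \mp\ii q_P(\xi))^{\rm T} e^{\mp\ii Z q_P(\xi)} = \psi_{P,0}^\mp(Z, \xi)$. A parallel calculation using (\ref{S-jump}) handles $\psi_{S,0}^\pm(Z, w_S(\xi))$ in (\ref{symmetry-2}), and (\ref{SP-jump}) covers both $\psi_{P,0}^\pm \circ w_{SP}$ and $\psi_{S,0}^\pm \circ w_{SP}$. Since $w_P(\xi)$, $w_S(\xi)$, $w_{SP}(\xi)$ project to the same point as $\xi$, the operator $\hat{H}_0$ is unchanged, so both sides of each asserted equality solve the same ODE and coincide on $(-\infty, -H]$; uniqueness forces equality for all $Z$.

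For the conjugation properties, reality of $P$, $M$, $N$ from (\ref{(1b)}) and of $\omega^2$ ensures that $\overline{\psi(\cdot, \overline{\xi})}$ solves (\ref{(1a)}) at $\xi$ whenever $\psi(\cdot, \overline{\xi})$ does. For (\ref{symmetryproj}) I would verify on the lower half space, using $\overline{q_P(\overline{\xi})} = -q_P(\xi)$ from (\ref{k-properties10}) (and its $q_P$-analogue), that $\overline{\psi_{P,0}^\pm(Z, \overline{\xi})} = \psi_{P,0}^\pm(Z, \xi)$, and similarly for $S$; uniqueness lifts this to every $Z$. The cut identities (\ref{Jost_real_cut}) and (\ref{Jost_im_cut}) are then specializations: on the real cut $[-\omega/\sqrt{\sigma_0}, \omega/\sqrt{\sigma_0}]$ both $q_P$ and $q_S$ take real boundary values and $\xi$ is real, so conjugating $\psi_{P,0}^\pm$ simply flips the sign of the $\pm\ii q_P$ factor and delivers $\psi_{P,0}^\mp$; on $\xi \in \ii\R$ the quasimomenta are again real while $\overline{\xi} = -\xi$, and combined with $q_P(-\xi) = -q_P(\xi)$ this yields $\overline{\psi_{P,0}^\pm(Z, \xi)} = \psi_{P,0}^\pm(Z, -\xi) = \psi_{P,0}^\pm(Z, \overline{\xi})$.

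The only genuine obstacle is bookkeeping: one must carefully track which boundary value of $q_P(\xi)$ and $q_S(\xi)$ is in use on each rim of each cut and check that the sign rules (\ref{k-properties10})--(\ref{SP-jump}) produce the correct signs when inserted into the exponential factors. Once these conventions are fixed, the argument is uniform across all five identities, driven by the single structural fact that the coefficients of (\ref{(1a)}) are sheet-independent functions of $\xi$ while the Jost solutions are determined by their asymptotics for $Z \le -H$.
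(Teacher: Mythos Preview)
Your proposal is correct and follows essentially the same approach as the paper: verify each identity on the explicit plane-wave Jost solutions for $Z \le -H$, observe that the Rayleigh operator (\ref{(1a)}) is invariant under the relevant transformation (sheet maps or conjugation), and then invoke uniqueness of the Cauchy problem to extend to all $Z$. If anything, you are slightly more thorough than the paper, which writes out only the arguments for (\ref{symmetryproj}), (\ref{Jost_real_cut}), and (\ref{Jost_im_cut}) and leaves (\ref{symmetry-1})--(\ref{symmetry-2}) implicit.
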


\begin{proof}
For $\xi\in\cK$ we observe that equation (\ref{(1a)}) is invariant under operation $\psi \to \psi^*$. As $\overline{q_\bullet(\xi)}=-q_\bullet(\overline{\xi})$ and
for $Z<-H$ $$\overline{\psi_P^\pm(Z,\overline{\xi})}= \ma -\xi \\ \pm \ii q_P(\xi)\am e^{\pm \ii Z q_P}=\psi_P^\pm(Z,\xi),\qq  \overline{\psi_S^\pm(Z,\overline{\xi})}=\ma \pm \ii q_S  \\  \xi  \am e^{\pm \ii Z q_S}=\psi_S^\pm(Z,\xi).$$
Then (\ref{symmetryproj}) follows. 

For $\xi\in\left[-\frac{\omega}{\sqrt{\sigma_0}},\frac{\omega}{\sqrt{\sigma_0}}\right]$,
$$\overline{(\hat{H}_0(\xi) -\omega^2)\Psi}=( \hat{H}_0(\xi) -\omega^2)\overline{\Psi},$$
and for $Z\leq -H$ $$\overline{\psi_P^\pm(Z,\xi)}= \ma-\xi \\ \mp \ii q_P\am e^{\mp \ii Z q_P}=\psi_P^\mp(Z,\xi),\qq  \overline{\psi_S^\pm(Z,\xi)}=\ma \mp \ii q_S  \\ -  \xi  \am e^{\mp \ii Z q_S}=\psi_S^\mp(Z,\xi).$$
Then we get (\ref{Jost_real_cut}). 

For $\xi\in\ii\R$, also
$$\overline{( \hat{H}_0(\xi) -\omega^2)\Psi}=(\hat{H}_0(\xi) -\omega^2)\overline{\Psi},$$
and for $Z\leq -H$ $$\overline{\psi_P^\pm(Z,\xi)}= \ma \xi \\ \mp \ii q_P\am e^{\mp \ii Z q_P}=\psi_P^\pm(Z,-\xi),\qq  \overline{\psi_S^\pm(Z,\xi)}=\ma \mp \ii q_S  \\  \xi  \am e^{\mp \ii Z q_S}=\psi_S^\pm(Z,-\xi)$$ using (\ref{oncuts}). Then we get (\ref{Jost_im_cut}).
\end{proof}

Next, we reconsider the boundary matrix of tractions. This matrix satisfies the following identities:

\begin{lemma}[Conjugation of boundary matrix] 
On the projection of the Riemann surface $\cR$ to the cut plane $\cK$, we have
\begin{equation}\lb{Bconj}\mB^*(\xi)=\ma -\hat{a}(\psi_P^-)(\xi) &-\hat{a}(\psi_S^-)(\xi)\\ \hat{b}(\psi_P^-)(\xi) &\hat{b}(\psi_S^-)(\xi)\am,\qq \Delta^*(\xi) = -\Delta(\xi) .
\end{equation}
On the branch cut, $\xi\in\left[-\frac{\omega}{\sqrt{\sigma_0}},\frac{\omega}{\sqrt{\sigma_0}}\right]$,
\begin{equation}\lb{Bconj_real} \overline{\mB(\xi)} =\ma -\hat{a}(\psi_P^+)(\xi) &-\hat{a}(\psi_S^+)(\xi)\\ \hat{b}(\psi_P^+)(\xi) &\hat{b}(\psi_S^+)(\xi)\am
\end{equation}
and on the branch cut, $\xi \in \ii\R$,
\begin{equation}\lb{Bconj_im}\overline{\mB(\xi)}=\ma -\hat{a}(\psi_P^-)(\overline{\xi}) &-\hat{a}(\psi_S^-)(\overline{\xi})\\ \hat{b}(\psi_P^-)(\overline{\xi}) &\hat{b}(\psi_S^-)(\overline{\xi})\am=\ma -\hat{a}(\psi_P^-)(-\xi) &-\hat{a}(\psi_S^-)(-\xi)\\ \hat{b}(\psi_P^-)(-\xi) &\hat{b}(\psi_S^-)(-\xi)\am.
\end{equation}
\end{lemma}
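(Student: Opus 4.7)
The plan is a direct substitution argument: insert the three conjugation identities for $\Psi^\pm$ from Lemma \ref{l-conjJ} into the explicit formulas (\ref{tract1})--(\ref{tract2}) for $\hat{a}$ and $\hat{b}$, using that the Lam\'e parameters $\lambda$ and $\mu$ are real. The structural reason all three identities share the asymmetric pattern $\ma -a & -b \\ c & d \am$ is that $\hat{a}$ carries an explicit factor of $\ii$ while $\hat{b}$ does not; complex conjugation therefore flips the sign of every entry in the first row of $\mB$ and preserves the second.

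For (\ref{Bconj}) I would apply (\ref{symmetryproj}), $\overline{\Psi^\pm(Z,\overline{\xi})} = \Psi^\pm(Z,\xi)$, to each of the four entries $\hat{a}(\psi_P^-)$, $\hat{a}(\psi_S^-)$, $\hat{b}(\psi_P^-)$, $\hat{b}(\psi_S^-)$ of $\mB^*$ separately. In $\hat{a}$ the factor $\ii$ conjugates to $-\ii$ while $\overline{\xi}$ returns to $\xi$ and each conjugated Jost component returns to the original, giving $\hat{a}^*(\psi_P^-)(\xi) = -\hat{a}(\psi_P^-)(\xi)$; the analogous computation for $\hat{b}$ preserves the sign because of the absence of an explicit $\ii$. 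The determinant identity $\Delta^*(\xi) = -\Delta(\xi)$ then follows from the elementary $2\times 2$ computation
\[
   \det\ma -a & -b \\ c & d \am = -(ad-bc).
\]
For (\ref{Bconj_real}) the argument is identical except that it uses (\ref{Jost_real_cut}), so each $\psi_\bullet^-$ conjugates to $\psi_\bullet^+$ while the sign pattern is unchanged, producing (\ref{Bconj_real}) as stated.

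The case (\ref{Bconj_im}) is the only one requiring a bit more care. On the imaginary axis, (\ref{Jost_im_cut}) gives $\overline{\Psi^\pm(Z,\xi)} = \Psi^\pm(Z,-\xi)$, and the identity $\overline{\xi} = -\xi$ now appears explicitly in the $\xi$-linear terms of both $\hat{a}$ and $\hat{b}$. There are therefore two sign sources to track inside $\hat{a}$: the flip $\ii \to -\ii$, and the flip $\overline{\xi} \to -\xi$ multiplying the first Jost component. When these are combined with the simultaneous argument shift $\xi \mapsto -\xi$ inside the Jost solution, the algebra assembles into $-\hat{a}(\psi_\bullet^-)(-\xi)$ in the first row and $\hat{b}(\psi_\bullet^-)(-\xi)$ in the second, which is exactly the first matrix in (\ref{Bconj_im}); the second equality in (\ref{Bconj_im}) is immediate from $\overline{\xi} = -\xi$. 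I do not expect any genuine obstacle: the lemma is a bookkeeping corollary of Lemma \ref{l-conjJ} together with the explicit traction formulas, the only delicate point being the careful coordination of the two sign sources on the imaginary cut.
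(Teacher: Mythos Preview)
Your proposal is correct and follows essentially the same route as the paper: both proofs insert the conjugation identities (\ref{symmetryproj}), (\ref{Jost_real_cut}), (\ref{Jost_im_cut}) from Lemma~\ref{l-conjJ} into the explicit traction formulas (\ref{tract1})--(\ref{tract2}), exploiting that the factor $\ii$ in $\hat{a}$ produces the sign flip in the first row while $\hat{b}$ is sign-preserving. The paper's write-up is slightly more explicit in displaying each of the four entries separately, but the underlying computation and structure are identical to yours.
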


\begin{proof}
Property (\ref{Bconj}) follows from (\ref{symmetryproj}) and from
\begin{align*}
&\hat{a}(\psi)^*(\xi)=-\ii\left(\lambda(0^-) \xi \psi_1(0^-,\xi) + (\lambda(0^-) + 2\mu(0^-)) \pdpd{\psi_2}{Z}(0^-,\xi)\right)=-\hat{a}(\psi)(\xi),
\\
&\hat{b}(\psi)^*(\xi)=-\xi \mu(0^-)\psi_2(0^-,\xi) + \mu(0^-)\pdpd{\psi_1}{Z}(0^-,\xi) = \hat{b}(\psi)(\xi)
\end{align*}
for $\xi\in\cK$.

For $\xi\in\left[-\frac{\omega}{\sqrt{\sigma_0}},\frac{\omega}{\sqrt{\sigma_0}}\right]$, using (\ref{Jost_real_cut}), we get
\begin{align*}
\overline{\hat{a}(\psi^\pm_P)}&=-\ii\left(\lambda(0^-) \xi \overline{\psi_{P;1}^\pm(0^-)} + (\lambda(0^-) + 2\mu(0^-)) \pdpd{\overline{\psi_{P;2}^\pm}}{Z}(0^-)\right)\\&=-\ii\left(\lambda(0^-) \xi \psi_{P;1}^\mp(0^-) + (\lambda(0^-) + 2\mu(0^-)) \pdpd{\psi_{P;2}^\mp}{Z}(0^-)\right) =-\hat{a}(\psi^\mp_P),\qq \overline{\hat{a}(\psi^\pm_S)}=-\hat{a}(\psi^\mp_S),
\\
\overline{\hat{b}(\psi^\pm_P)}&=-\xi \mu(0^-)\overline{\psi_{P;2}^\pm(0^-)} + \mu(0^-)\pdpd{\overline{\psi_{P;1}^\pm}}{Z}(0^-) = \hat{b}(\psi^\mp_P),\qq \overline{\hat{b}(\psi^\pm_S)}=\hat{b}(\psi^\mp_P).
\end{align*}
Then, for $\xi\in\left[-\frac{\omega}{\sqrt{\sigma_0}},\frac{\omega}{\sqrt{\sigma_0}}\right],$
\begin{align*}\overline{\mB(\xi)}=&\overline{\ma \hat{a}(\psi_P^-)(\xi) &\hat{a}(\psi_S^-)(\xi)\\ \hat{b}(\psi_P^-)(\xi) &\hat{b}(\psi_S^-)(\xi)\am} =\ma -\hat{a}(\psi_P^+)(\xi) &-\hat{a}(\psi_S^+)(\xi)\\ \hat{b}(\psi_P^+)(\xi) &\hat{b}(\psi_S^+)(\xi)\am\\=&\ma-1 & 0\\ 0 &1\am\ma \hat{a}(\psi_P^+)(\xi) &\hat{a}(\psi_S^+)(\xi)\\ \hat{b}(\psi_P^+)(\xi) &\hat{b}(\psi_S^+)(\xi)\am.\end{align*} and we get (\ref{Bconj_real}).

For $\xi\in\ii\R,$ using (\ref{Jost_im_cut}), we get 
\begin{align*}
\overline{\hat{a}(\psi^\pm_P)(\xi)}
&=-\ii\left(\lambda(0^-) (-\xi) \psi_{P;1}^\pm(0,-\xi) + (\lambda(0^-) + 2\mu(0^-)) \pdpd{\psi_{P;2}^\pm}{Z}(0^-,-\xi)\right)\\&=-\hat{a}(\psi^\pm_P)(-\xi),\qq \overline{\hat{a}(\psi^\pm_S)(\xi)}=-\hat{a}(\psi^\pm_S)(-\xi),
\\
\overline{\hat{b}(\psi^\pm_P)(\xi)}
&=-(-\xi) \mu(0^-) \psi_{P;2}^\pm(0^-,-\xi) + \mu(0^-)\pdpd{\psi_{P;1}^\pm}{Z}(0,-\xi)\\ &  =\phantom{-} \hat{b}(\psi^\pm_P)(-\xi),\qq \overline{\hat{b}(\psi^\pm_S)}(\xi)=\phantom{-}\hat{b}(\psi^\pm_P)(-\xi).
\end{align*}
Then, for $\xi\in\ii\R,$
$$\overline{\mB(\xi)}=\overline{\ma \hat{a}(\psi_P^-)(\xi) &\hat{a}(\psi_S^-)(\xi)\\ \hat{b}(\psi_P^-)(\xi) &\hat{b}(\psi_S^-)(\xi)\am} =\ma -\hat{a}(\psi_P^-)(-\xi) &-\hat{a}(\psi_S^-)(-\xi)\\ \hat{b}(\psi_P^-)(-\xi) &\hat{b}(\psi_S^-)(-\xi)\am$$
and we get (\ref{Bconj_im}).
\end{proof}

\subsection*{Decomposition into entire functions}

It is convenient to decompose the Jost functions, analytic on the Riemann surface, into the pairs of entire functions on $\C$ \cite[Section~5.2]{IantchenkoKorotyaev2013a}. These functions allow useful representations of the numerators of the reflection coefficients (\ref{Num}) and the Rayleigh determinant  (\ref{Rd}), satisfy algebraic property (\ref{l_algebra}), and will play an important role in the solution of the inverse problem. We postpone the proof of analytic properties of these functions until Section~\ref{ss-A}.

We introduce, with a slight abuse of notation for $\varphi$,
$$
   \vartheta_P = \frac12 \left(\psi_{P}^+ + \psi_{P}^-\right) ,\
   \varphi_P = \frac{1}{2q_P} \left(\psi_{P}^+ - \psi_{P}^-\right) ,\qq
   \vartheta_S = \frac12 \left(\psi_{S}^+ + \psi_{S}^-\right) ,\
   \varphi_S = \frac{1}{2q_S} \left(\psi_{S}^+ - \psi_{S}^-\right) .
$$

These functions are entire. Clearly,
\begin{equation} \label{eq:JtoEnt}
   \psi_{P}^\pm = \vartheta_{P} \pm q_P \varphi_{P} ,\qq
   \psi_{S}^\pm = \vartheta_{S} \pm q_S \varphi_{S} .
\end{equation}

\begin{remark}\lb{entirecond_0}
In case $H = 0$, we have a Rayleigh system with constant Lam{\'e} parameters, $\lambda_0$, $\mu_0$. Then, for $Z < 0$,
\begin{eqnarray*}
   & &\vartheta_{P,0}(Z,\xi) = \frac12\left(\psi_{P,0}^+ + \psi_{P,0}^-\right)(Z,\xi)
   = \left(\begin{array}{c} -\xi \cos(q_P Z) \\
     -\ii q_P \sin(Z q_P) \end{array}\right) ,
\\     
   & &\varphi_{P,0}(Z,\xi) = \frac{1}{2 q_P} \left(\psi_{P,0}^+ - \psi_{P,0}^-\right)(Z,\xi)
   = \left(\begin{array}{c} -\ii \frac{\xi}{q_P} \sin(q_P Z) \\
     \ii \cos(q_P Z) \end{array}\right) ,
\\
   & &\vartheta_{S,0}(Z,\xi) = \frac12\left(\psi_{S,0}^+ + \psi_{S,0}^-\right)(Z,\xi)
   = \left(\begin{array}{c} -q_S \sin(q_S Z) \\
     -\xi \cos(q_{S} Z) \end{array}\right) ,
\\
   & &\varphi_{S,0}(Z,\xi) = \frac{1}{2 q_S} \left(\psi_{S,0}^+ - \psi_{S,0}^-\right)(Z,\xi)
   = \left(\begin{array}{c} \ii \cos(q_S Z) \\
     -\ii \frac{\xi}{q_S} \sin(q_S Z) \end{array}\right) ,
\end{eqnarray*}
which yield the boundary conditions,
\begin{eqnarray*}
   \vartheta_{P,0}(0^-) = \ma -\xi \\ 0 \am ,\qq
   \pdpd{\vartheta_{P,0}}{Z}(0^-) = \ma 0 \\ -\ii q_P^2 \am &,\qq&
   \varphi_{P,0}(0^-) = \ma 0 \\ \ii \am ,\qq
   \pdpd{\varphi_{P,0}}{Z}(0^-) = \ma -\ii \xi \\0 \am ,
\\
   \vartheta_{S,0}(0^-) = \ma 0 \\ -\xi \am ,\qq
   \pdpd{\vartheta_{S,0}}{Z}(0^-) = \ma -q_S^2 \\ 0\am &,\qq&
   \varphi_{S,0}(0^-) = \ma \ii  \\ 0 \am ,\qq
   \pdpd{\varphi_{S,0}}{Z}(0^-) = \ma 0 \\ -\ii \xi \am .
\end{eqnarray*}
\end{remark}
\medskip
In general case, relating to the  homogeneous case in  Remark \ref{entirecond_0}, we get
\[\lb{entirecond}
   (\vartheta_P,\varphi_P,\vartheta_S,\varphi_S)
   = (\vartheta_{P,0},\varphi_{P,0},\vartheta_{S,0},\varphi_{S,0})\qq
   \text{for}\qq Z \le -H ,
\]
which can be considered as boundary conditions. So, we have
\begin{remark}\lb{r-entirecond}
Functions $\vartheta_P,\varphi_P,\vartheta_S,\varphi_S$ are (unique) solutions to the Rayleigh equation (\ref{(1a)}) satisfying the conditions
(\ref{entirecond}), which implies that they are entire on $\C.$
\end{remark}
\medskip

\noindent
We introduce the following notation relevant to the boundary matrix,
\begin{align}&\gamma_1:=\hat{a}(\vartheta_P)=\frac12\left(\hat{a}(\psi_{P}^+)+\hat{a}(\psi_{P}^-)\right),\qq\gamma_3:=\hat{a}(\varphi_{P})=\frac{1}{2q_P}\left(\hat{a}(\psi_{P}^+)-\hat{a}(\psi_{P}^-)\right),\label{eq:defgam13}\\
  &\gamma_5:=\hat{a}(\vartheta_{S})=\frac12\left(\hat{a}(\psi_{S}^+)+\hat{a}(\psi_{S}^-)\right),\qq\gamma_7:=\hat{a}(\varphi_{S})=\frac{1}{2q_S}\left(\hat{a}(\psi_{S}^+)-\hat{a}(\psi_{S}^-)\right)
\end{align}  
and
\begin{align}&\gamma_2:=\hat{b}(\vartheta_P)=\frac12\left(\hat{b}(\psi_{P}^+)+\hat{b}(\psi_{P}^-)\right),\qq\gamma_4:=\hat{b}(\varphi_{P})=\frac{1}{2q_P}\left(\hat{b}(\psi_{P}^+)-\hat{b}(\psi_{P}^-)\right),\\
  &\gamma_6:=\hat{b}(\vartheta_{S})=\frac12\left(\hat{b}(\psi_{S}^+)+\hat{b}(\psi_{S}^-)\right),\qq\gamma_8:=\hat{b}(\varphi_{S})=\frac{1}{2q_S}\left(\hat{b}(\psi_{S}^+)-\hat{b}(\psi_{S}^-)\right) .
\label{eq:defgam68}
\end{align} It follows immediately that $\gamma_1,\ldots,\gamma_8$ are entire as well.

\begin{remark}
The elements of the boundary matrix admit the decompositions into entire functions,
\begin{align*}&\hat{a}(\psi_{P}^+)=\gamma_1+q_P\gamma_3,\qq  \hat{a}(\psi_{P}^-)=\gamma_1-q_P\gamma_3,\qq \hat{a}(\psi_{S}^+)=\gamma_5+q_S\gamma_7,\qq \hat{a}(\psi_{S}^-)=\gamma_5-q_S\gamma_7\\
&\hat{b}(\psi_{P}^+)=\gamma_2+q_P\gamma_4,\qq  \hat{b}(\psi_{P}^-)=\gamma_2-q_P\gamma_4,\qq \hat{b}(\psi_{S}^+)=\gamma_6+q_S\gamma_8,\qq \hat{b}(\psi_{S}^-)=\gamma_6-q_S\gamma_8.
\end{align*}
\end{remark}

\medskip

\noindent
Furthermore, we introduce the $2 \times 2$ determinants
\begin{align*}d_1&:=\gamma_1\gamma_6-\gamma_5\gamma_2,\qq d_2:=-\gamma_3\gamma_6+\gamma_5\gamma_4,\\
d_3&:=-\gamma_1\gamma_8+\gamma_7\gamma_2,\qq 
d_4:=\gamma_3\gamma_8-\gamma_7\gamma_4 ,
\end{align*}
and
\begin{align*}
-\tfrac{1}{2} \cP&:= \gamma_3\gamma_2-\gamma_1\gamma_4,\qq
-\tfrac{1}{2} \cS:= \gamma_7\gamma_6-\gamma_5\gamma_8.
\end{align*}
We have the identity
\[\lb{Num}
   \cP \cS = 4 (d_1 d_4 - d_2 d_3) .
\]
We find that the Rayleigh determinant (cf.~(\ref{eq:Rdet})) can be expressed in the determinants introduced above,
\[\lb{Rd}
   \Delta = d_1 + q_P d_2 + q_S d_3 + q_P q_S d_4 .
\]

\begin{remark}
In the case of a constant half space, when $H = 0$, we have
\begin{align*}
  &d_1 = d_{1,0} =  \hat{a}(\vt_{P,0}) \hat{b}(\vt_{S,0})
  - \hat{a}(\vt_{S,0}) \hat{b}(\vt_{P,0})
  = \ii \hm_0^2 \left(\frac{\omega^2}{\hm_0} - 2 \xi^2\right) ,
  \\[-0.15cm]
  &d_2 = d_{2,0} = -\hat{a}(\vp_{P,0}) \hat{b}(\vt_{S,0})
  + \hat{a}(\vt_{S,0}) \hat{b}(\vp_{P,0}) = 0 ,
  \\
  &d_3 = d_{3,0} = -\hat{a}(\vt_{P,0}) \hat{b}(\vp_{S,0})
  + \hat{a}(\vp_{S,0}) \hat{b}(\vt_{P,0}) = 0 ,
  \\
  &d_4 = d_{4,0} = \hat{a}(\vp_{P,0}) \hat{b}(\vp_{S,0})
  - \hat{a}(\vp_{S,0}) \hat{b}(\vp_{P,0}) = \ii 4 \hm_0^2 \xi^2 ,
\end{align*}
while
\begin{equation*}
   \Delta(\xi) = \Delta_0(\xi) = \ii \hm_0^2 \left(\left(\frac{\omega^2}{
             \hm_0} - 2 \xi^2\right)^2 + 4 q_P q_S \xi^2\right) .
\end{equation*}
\end{remark}

\begin{lemma} \label{l_algebra}
The following algebraic relation holds true,
$$
\frac12\cS\ma \gamma_1\\ \gamma_2\\\gamma_3\\ \gamma_4\am=\left(\begin{array}{ccccccccc}  -d_3 & 0 & -d_1& 0\\
0 & -d_3 & 0 &-d_1\\
d_4 & 0&d_2& 0\\ 0 & d_4 & 0&d_2 \end{array}\right) \ma \gamma_5\\ \gamma_6\\ \gamma_7\\ \gamma_8  \am .
$$
\end{lemma}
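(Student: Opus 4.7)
The identity is an algebraic relation among the eight entire functions $\gamma_1,\ldots,\gamma_8$, and does not require any properties of the Jost solutions beyond the definitions of the $d_i$ and $\cS$. The plan is to reduce the matrix equation to four scalar equations in the $\gamma_j$ and recognize each of them as an instance of the universal Plücker identity for three vectors in $\C^2$.

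First, I would organize the eight functions into four column vectors in $\C^2$:
\[
   v_P = \ma \gamma_1 \\ \gamma_2 \am ,\qq
   u_P = \ma \gamma_3 \\ \gamma_4 \am ,\qq
   v_S = \ma \gamma_5 \\ \gamma_6 \am ,\qq
   u_S = \ma \gamma_7 \\ \gamma_8 \am .
\]
Then I would simply rewrite the defining formulas as $2\times 2$ determinants, namely
\[
   d_1 = \det[v_P,v_S],\qq
   d_2 = \det[v_S,u_P],\qq
   d_3 = \det[u_S,v_P],\qq
   d_4 = \det[u_P,u_S],
\]
and $\tfrac12\cS = \det[v_S,u_S]$, $\tfrac12\cP = \det[v_P,u_P]$.

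Next, I would invoke the elementary fact that any three vectors in $\C^2$ are linearly dependent: for $a,b,c \in \C^2$ one has
\[
   \det[a,b]\,c + \det[b,c]\,a + \det[c,a]\,b = 0 .
\]
Applying this with $(a,b,c) = (v_S,u_S,v_P)$ yields $\tfrac12\cS\, v_P = -d_3\, v_S - d_1\, u_S$, which reading componentwise gives rows 1 and 2 of the claimed matrix identity. Applying it with $(a,b,c) = (v_S,u_S,u_P)$ and using $\det[u_S,u_P] = -d_4$, $\det[u_P,v_S] = -d_2$ yields $\tfrac12\cS\, u_P = d_4\, v_S + d_2\, u_S$, which gives rows 3 and 4.

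There is no real obstacle: once the packaging into $2\times 2$ determinants is made, the statement is purely an algebraic tautology, and the only bookkeeping to watch is the sign conventions in the definitions of $d_2$ and $d_3$ (which carry an extra minus sign compared to the ``natural'' ordering of arguments in the determinant). Alternatively, if one prefers a direct verification, each of the four scalar equations expands to a cancellation such as $-d_3\gamma_5 - d_1\gamma_7 = \gamma_1(\gamma_5\gamma_8 - \gamma_7\gamma_6) = \tfrac12\cS\gamma_1$, obtained by distributing and collecting terms; this is routine and I would relegate it to a short calculation rather than carrying it out in full.
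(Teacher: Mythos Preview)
Your proof is correct. The paper does not give a detailed argument, stating only that ``the proof is straightforward''; your packaging of the $\gamma_j$ into the four $\C^2$-vectors $v_P,u_P,v_S,u_S$ and invocation of the Pl\"ucker (Cramer) identity $\det[a,b]\,c+\det[b,c]\,a+\det[c,a]\,b=0$ is a clean and fully rigorous way to realize that straightforward verification, with the sign bookkeeping for $d_2,d_3$ handled correctly.
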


The proof is straightforward.

 We now discuss some more properties of $\gamma_1,\ldots,\gamma_8$. We will always assume that $\omega>0$ is fixed.

\begin{lemma} \label{l-prop_delta}
On the branch cut, $\xi \in \left[-\frac{\omega}{\sqrt{\sigma_0}},\frac{\omega}{\sqrt{\sigma_0}}\right]$, we have
\begin{equation} \label{gamma_real_cut}
\gamma_1,\gamma_4,\gamma_5,\gamma_8\in\ii\R,\qq  \gamma_2,\gamma_3,\gamma_6,\gamma_7\in \R\qq \text{and}\qq
d_1,d_4\in\ii\R,\qq d_2,d_3,\cP,\cS\in\R.
\end{equation}
On the branch cut, $\xi \in \ii \R$, the following holds true,
\begin{align}&\overline{\gamma_1(\xi)}= -\gamma_1(-\xi),\qq \overline{\gamma_2(\xi)}= \gamma_2(-\xi),\qq \overline{\gamma_3(\xi)}= \gamma_3(-\xi),\qq \overline{\gamma_4(\xi)}= -\gamma_4(-\xi)\lb{gamma_im_cut}\\
  &\overline{\gamma_5(\xi)}= -\gamma_5(-\xi),\qq \overline{\gamma_6(\xi)}= \gamma_6(-\xi),\qq \overline{\gamma_7(\xi)}= \gamma_7(-\xi),\qq \overline{\gamma_8(\xi)}= -\gamma_8(-\xi)
\end{align}
and
\begin{align}
  \overline{d_1(\xi)}=-d_1(-\xi),\qq \overline{d_2(\xi)}=d_2(-\xi),\qq \overline{d_3(\xi)}=d_3(-\xi),\qq \overline{d_4(\xi)}=-d_4(-\xi) ;
\end{align}
furthermore,
\begin{align}
  \overline{\cP(\xi)}=\cP(-\xi),\qq
  \overline{\cS(\xi)}=\cS(-\xi)
\end{align}
and
\begin{align}
  \overline{\Delta(\xi)}=-\Delta(-\xi).
\end{align}
\end{lemma}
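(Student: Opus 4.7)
The plan is to derive the stated identities directly from the defining formulas (\ref{eq:defgam13})--(\ref{eq:defgam68}) by complex conjugation, using the traction-conjugation identities already extracted in the proof of the Conjugation of Boundary Matrix lemma, namely $\overline{\hat a(\psi_\bullet^\pm)(\xi)}=-\hat a(\psi_\bullet^\mp)(\xi)$ and $\overline{\hat b(\psi_\bullet^\pm)(\xi)}=\hat b(\psi_\bullet^\mp)(\xi)$ for $\xi$ on the real cut, and $\overline{\hat a(\psi_\bullet^\pm)(\xi)}=-\hat a(\psi_\bullet^\pm)(-\xi)$, $\overline{\hat b(\psi_\bullet^\pm)(\xi)}=\hat b(\psi_\bullet^\pm)(-\xi)$ for $\xi\in\ii\R$. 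The reality/parity of $d_1,\ldots,d_4$, $\cP$, $\cS$ and $\Delta$ will then drop out by substitution into their defining expressions and into (\ref{Rd}).

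For $\xi\in[-\omega/\sqrt{\sigma_0},\omega/\sqrt{\sigma_0}]$ I first note that both quasimomenta are real there, since $q_P^2=\omega^2/\sigma_0-\xi^2\ge 0$ and $q_S^2=\omega^2/\mu_0-\xi^2>0$ as $\sigma_0>\mu_0$. Conjugating the symmetric combinations then gives $\overline{\gamma_1}=-\gamma_1$ and $\overline{\gamma_2}=\gamma_2$ in one line, while conjugating the antisymmetric combinations gives $\overline{q_P\gamma_3}=q_P\gamma_3$ and $\overline{q_P\gamma_4}=-q_P\gamma_4$; since $q_P\in\R$, this yields $\gamma_3\in\R$ and $\gamma_4\in\ii\R$. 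The same argument with $q_S\in\R$ handles $\gamma_5,\gamma_6,\gamma_7,\gamma_8$. Feeding these types into the definitions of $d_1,\ldots,d_4$ and $\cP,\cS$ and observing that each is a sum of two products of the form (real)$\times$(real) or (imaginary)$\times$(imaginary) gives the claimed reality statements in (\ref{gamma_real_cut}).

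On the imaginary cut $\ii\R$ I will use in addition the sign law $q_\bullet(-\xi)=-q_\bullet(\xi)$, $\bullet\in\{P,S\}$, which is read off from (\ref{oncuts}): both quasimomenta are real there and change sign across $\xi=0$. Conjugating the symmetric combinations produces $\overline{\gamma_1(\xi)}=-\gamma_1(-\xi)$, $\overline{\gamma_2(\xi)}=\gamma_2(-\xi)$ directly; for an antisymmetric combination one picks up an extra factor $\overline{q_\bullet(\xi)}/q_\bullet(-\xi)=q_\bullet(\xi)/q_\bullet(-\xi)=-1$, which for $\gamma_3$ cancels the minus introduced by $\overline{\hat a}=-\hat a(-\xi)$ and yields $\overline{\gamma_3(\xi)}=\gamma_3(-\xi)$, while for $\gamma_4$ it combines with $\overline{\hat b}=\hat b(-\xi)$ to give $\overline{\gamma_4(\xi)}=-\gamma_4(-\xi)$. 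The same mechanism supplies the remaining four identities in (\ref{gamma_im_cut}). The assertions about $d_i$, $\cP$, $\cS$ follow by direct substitution into their definitions; for $\Delta$ one invokes (\ref{Rd}) together with $q_P(-\xi)=-q_P(\xi)$ and $q_S(-\xi)=-q_S(\xi)$, and the four terms combine to give the overall sign $\overline{\Delta(\xi)}=-\Delta(-\xi)$.

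The computation is routine bookkeeping and there is no genuine obstacle. The one point demanding care is keeping consistent track of the branch of $q_P,q_S$ on each cut: on both cuts $q_P,q_S$ are real, and on the imaginary cut one must use $q_\bullet(-\xi)=-q_\bullet(\xi)$ to reconstruct $\gamma_k(-\xi)$ from the $q_\bullet(\xi)^{-1}$-weighted antisymmetric combinations of $\hat a(\psi_\bullet^\pm)(-\xi)$ and $\hat b(\psi_\bullet^\pm)(-\xi)$. I expect the write-up to parallel closely the structure of the Conjugation of Boundary Matrix lemma proof immediately preceding the statement.
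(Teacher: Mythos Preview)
Your proposal is correct and takes essentially the same approach as the paper: the paper's proof states that the lemma is ``a straightforward consequence of Lemma~\ref{l-conjJ}'' and then illustrates with exactly the traction-conjugation identities $\overline{\hat a(\psi_\bullet^\pm)}=-\hat a(\psi_\bullet^\mp)$, $\overline{\hat b(\psi_\bullet^\pm)}=\hat b(\psi_\bullet^\mp)$ on the real cut, deducing $\overline{\gamma_j}=\pm\gamma_j$ just as you do. Your write-up is more explicit about the role of the reality of $q_P,q_S$ on both cuts and the sign flip $q_\bullet(-\xi)=-q_\bullet(\xi)$ on $\ii\R$, which the paper leaves implicit, but the route is identical.
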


The proof is a straightforward consequence of Lemma \ref{l-conjJ}. 
For example,  from (\ref{Jost_real_cut}) it follows for $\xi\in\left[-\frac{\omega}{\sqrt{\sigma_0}},\frac{\omega}{\sqrt{\sigma_0}}\right]$ that 
\begin{align*}
\overline{\hat{a}(\psi^\pm_P)}=-\hat{a}(\psi^\mp_P),\qq \overline{\hat{a}(\psi^\pm_S)}=-\hat{a}(\psi^\mp_S),\qq
\overline{\hat{b}(\psi^\pm_P)} = \hat{b}(\psi^\mp_P),\qq \overline{\hat{b}(\psi^\pm_S)}=\hat{b}(\psi^\mp_P)
\end{align*} and then \begin{align*}&\overline{\gamma_1}= -\gamma_1,\qq \overline{\gamma_2}= \gamma_2,\qq \overline{\gamma_3}= \gamma_3,\qq \overline{\gamma_4}= -\gamma_4\\
&\overline{\gamma_5}= -\gamma_5,\qq \overline{\gamma_6}= \gamma_6,\qq \overline{\gamma_7}= \gamma_7,\qq \overline{\gamma_8}= -\gamma_8.
\end{align*}

Concluding this section, subjecting the Rayleigh determinant to conjugations, we find

\begin{lemma}
The Rayleigh determinant satisfies
$$
\Delta(\xi)\Delta(w_{PS}(\xi))-\Delta(w_P(\xi))\Delta(w_{S}(\xi))=q_P q_S\cP\cS,\qq\xi\in\cR .
$$
\end{lemma}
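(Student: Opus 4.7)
The plan is to reduce the identity to a purely algebraic computation by exploiting the representation \eqref{Rd} of the Rayleigh determinant in terms of the entire functions $d_1, d_2, d_3, d_4$ together with the identity \eqref{Num} relating $\cP \cS$ to $d_1 d_4 - d_2 d_3$. The key observation is that $d_1, \ldots, d_4$ are built from $\gamma_1, \ldots, \gamma_8$, which are entire in $\xi \in \C$, so they are single-valued on $\cR$ and invariant under the sheet maps $w_P, w_S, w_{PS}$. All the sheet-dependence of $\Delta$ is carried by the quasimomenta $q_P$ and $q_S$, which change sign according to \eqref{S-jump}–\eqref{SP-jump}.

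First, I would record the four specializations of \eqref{Rd} obtained by applying the sheet maps:
\begin{align*}
\Delta(\xi) &= d_1 + q_P d_2 + q_S d_3 + q_P q_S d_4, \\
\Delta(w_P(\xi)) &= d_1 - q_P d_2 + q_S d_3 - q_P q_S d_4, \\
\Delta(w_S(\xi)) &= d_1 + q_P d_2 - q_S d_3 - q_P q_S d_4, \\
\Delta(w_{PS}(\xi)) &= d_1 - q_P d_2 - q_S d_3 + q_P q_S d_4.
\end{align*}

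Next, I would group the terms in pairs to make the products tractable. Writing $A := d_1 + q_P q_S d_4$, $B := q_P d_2 + q_S d_3$, $C := d_1 - q_P q_S d_4$, $D := q_P d_2 - q_S d_3$, the first product becomes $\Delta(\xi) \Delta(w_{PS}(\xi)) = (A+B)(A-B) = A^2 - B^2$, while the second is $\Delta(w_P(\xi)) \Delta(w_S(\xi)) = (C-D)(C+D) = C^2 - D^2$. A short expansion gives $A^2 - C^2 = 4 q_P q_S \, d_1 d_4$ and $D^2 - B^2 = -4 q_P q_S \, d_2 d_3$, so
\[
\Delta(\xi)\Delta(w_{PS}(\xi)) - \Delta(w_P(\xi)) \Delta(w_S(\xi))
= 4 q_P q_S \bigl( d_1 d_4 - d_2 d_3 \bigr).
\]

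Finally, invoking the identity \eqref{Num}, namely $\cP \cS = 4(d_1 d_4 - d_2 d_3)$, the right-hand side equals $q_P q_S \cP \cS$, which is the claimed identity. I do not expect any serious obstacle here: the only non-trivial input is \eqref{Num}, which is already established, and the sheet symmetry of $d_j$ is immediate from their expressions in terms of the entire functions $\gamma_1, \ldots, \gamma_8$. The computation is valid throughout $\cR$ because both sides are meromorphic in $\xi$ and agree by an algebraic identity in $q_P, q_S$ that holds pointwise on every sheet.
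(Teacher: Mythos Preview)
Your proof is correct and follows essentially the same approach as the paper: both use the decomposition \eqref{Rd} to write each of the four sheet-transformed determinants as a sum/difference of the same four building blocks, factor the two products as differences of squares, subtract, and then invoke \eqref{Num}. Your grouping via $A,B,C,D$ is just a slightly tidier bookkeeping of the same computation.
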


\begin{proof}
We have
\begin{multline*}
\Delta(\xi)\Delta(w_{PS}(\xi))=\left(d_1+q_Pq_S d_4\right)^2-\left(q_Pd_2+q_Sd_3\right)^2\\
=d_1^2-q_P^2d_2^2-q_S^2d_3^2+q_P^2q_S^2 d_4^2+2\left(d_1d_4-d_2d_3\right)q_Pq_S
\end{multline*}
and
\begin{multline*}
\Delta(w_P(\xi))\Delta(w_{S}(\xi))
= \left( d_1-q_Pd_2+q_Sd_3-q_Pq_Sd_4\right)\left( d_1+q_Pd_2-q_Sd_3-q_Pq_Sd_4\right)
\\
=d_1^2-q_P^2d_2^2-q_S^2d_3^2+q_P^2q_S^2 d_4^2-2\left(d_1d_4-d_2d_3\right)q_Pq_S .
\end{multline*}
Hence,
\begin{align*}
&\Delta(\xi)\Delta(w_{PS}(\xi))-\Delta(w_P(\xi))\Delta(w_{S}(\xi))=4\left(d_1d_4-d_2d_3\right)q_Pq_S
\end{align*} and as $d_1d_4-d_2d_3=\frac14\cP\cS$, while these equalities all hold for $\xi \in \cR$, we get the statement of the Lemma.
\end{proof}

\section{Reflection matrix and its properties}

\subsection{Reflected Jost solutions}

We send in from below the Jost solutions, $\psi^+_P$ or $\psi^+_S$,
generating the ``reflected'' solutions, $g^+_P$ or $g^+_S$ respectively, at $Z = 0$. We express the reflected solutions in terms of the
Jost solutions,
\begin{equation} \label{reflected_waves}
   g^+_P = \psi^+_P + R_2 \psi_P^- - q_P R_1 \psi_S^- ,\qq
   g^+_S = \psi^+_S + q_S \widetilde{R}_1 \psi^-_P + \widetilde{R}_2 \psi^-_S
\end{equation}
with the reflection coefficients $R_1$, $R_2$, $\widetilde{R}_1$ and $\widetilde{R}_2$ to be determined. We form the reflection matrix
\[\lb{R-m}
   \mR  = \ma R_2 & q_S \widetilde{R}_1 \\
             -q_P R_1 & \widetilde{R}_2 \am .
\]
We note that in \cite{CdV2006} the reflection matrix (in the homogeneous case) was defined as the transpose of $\mR$. Our choice is motivated by analogy of the scattering matrix on the whole line, connecting the incoming waves with the outgoing ones, as for example in \cite[2.4.2]{DyatlovZworski2022}.

The reflection coefficients are obtained by imposing the traction-free boundary conditions,
\[\lb{Cond1}
   \left\{\begin{array}{cc}
   R_2 \hat{a}(\psi^-_P) - q_P R_1 \hat{a}(\psi^-_S) =& -\hat{a}(\psi^+_P) ,\\
   R_2 \hat{b}(\psi^-_P) - q_P R_1 \hat{b}(\psi^-_S) =& -\hat{b}(\psi^+_P)
   \end{array}\right.
\]
and
\[\lb{Cond2}
   \left\{\begin{array}{cc}
   q_S \widetilde{R}_1 \hat{a}(\psi^-_P)
       + \widetilde{R}_2 \hat{a}(\psi^-_S) =& -\hat{a}(\psi^+_S) ,\\
   q_S \widetilde{R}_1 \hat{b}(\psi^-_P)
       + \widetilde{R}_2 \hat{b}(\psi^-_S) =& -\hat{b}(\psi^+_S) .
   \end{array}\right.
\]
We straightforwardly obtain
\begin{align}
   &R_2 = \frac{1}{\Delta}
   \det\ma -\hat{a}(\psi_P^+) & \hat{a}(\psi_S^-) \\ -\hat{b}(\psi_P^+) & \hat{b}(\psi_S^-)\am ,\qq
   R_1 = \frac{1}{q_P \Delta}
   \det\ma \hat{a}(\psi_P^-) & \hat{a}(\psi_P^+) \\ \hat{b}(\psi_P^-) & \hat{b}(\psi_P^+)\am ,
\label{detref}\\
   &\widetilde{R}_2 = \frac{1}{\Delta}
   \det\ma -\hat{a}(\psi_P^-) & \hat{a}(\psi_S^+) \\ -\hat{b}(\psi_P^-) & \hat{b}(\psi_S^+)\am ,\qq
   \widetilde{R}_1 = \frac{1}{q_S \Delta}
   \det\ma -\hat{a}(\psi_S^+) & \hat{a}(\psi_S^-) \\ -\hat{b}(\psi_S^+) & \hat{b}(\psi_S^-)\am .
\end{align} 
Revisiting the mappings $w_P$, $w_S$ and $w_{SP}$ once again, we find

\begin{lemma} \label{lem:RDel}
For $\xi \in \cR$, the following holds true,
\begin{equation*}
   R_1(\xi) = \frac{\cP(\xi)}{\Delta(\xi)} ,\qq
   \widetilde{R}_1(\xi) = \frac{\cS(\xi)}{\Delta(\xi)} ,\qq
   \widetilde{R}_2(\xi) = -\frac{\Delta(w_S(\xi))}{\Delta(\xi)} ,\qq
   R_2(\xi) = -\frac{\Delta(w_P(\xi))}{\Delta(\xi)} .
\end{equation*}
\end{lemma}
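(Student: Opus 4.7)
The plan is to verify each of the four identities by direct substitution into the determinantal formulas (\ref{detref}), using the decomposition of the boundary tractions of $\psi_{P,S}^\pm$ into the entire functions $\gamma_1,\dots,\gamma_8$. The key observation is that the $\gamma_j$'s depend only on $\xi$ (not on the choice of sheet), while the sign changes in $q_P$ and $q_S$ under the sheet-exchange maps $w_P$, $w_S$ are exactly what is needed to convert the off-diagonal $2\times 2$ determinants in (\ref{detref}) into the expressions $q_P\cP$, $q_S\cS$, $-\Delta(w_P(\xi))$, $-\Delta(w_S(\xi))$.

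For $R_1$, I would substitute $\hat a(\psi_P^\pm)=\gamma_1\pm q_P\gamma_3$, $\hat b(\psi_P^\pm)=\gamma_2\pm q_P\gamma_4$ into the numerator and expand: the terms of order $q_P^0$ and $q_P^2$ cancel, leaving $2q_P(\gamma_1\gamma_4-\gamma_3\gamma_2)=q_P\cP$, which divided by $q_P\Delta$ yields $\cP/\Delta$. The computation for $\widetilde{R}_1$ is identical after swapping the index sets $\{1,2,3,4\}\leftrightarrow\{5,6,7,8\}$ and $q_P\leftrightarrow q_S$, producing $q_S\cS$ in the numerator.

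For $R_2$ and $\widetilde R_2$ I would expand the mixed determinants in (\ref{detref}) in the same manner and collect the result by powers of $q_P$ and $q_S$. Using the definitions of $d_1,d_2,d_3,d_4$, the numerator of $R_2$ becomes $-d_1+q_Pd_2-q_Sd_3+q_Pq_Sd_4$, and the numerator of $\widetilde R_2$ becomes $-d_1-q_Pd_2+q_Sd_3+q_Pq_Sd_4$. Since $\Delta(\xi)=d_1+q_Pd_2+q_Sd_3+q_Pq_Sd_4$ and the $d_j$'s are entire in $\xi$ (so invariant under $w_P,w_S$), applying $w_P$ (which flips the sign of $q_P$ only) and $w_S$ (which flips the sign of $q_S$ only) to this representation of $\Delta$ yields exactly $-\Delta(w_P(\xi))$ and $-\Delta(w_S(\xi))$ respectively.

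None of the steps should present a real obstacle: the identities are essentially bookkeeping, with the only care needed in tracking signs during the expansion of the $2\times 2$ determinants and in matching the resulting linear combinations of $d_1,\dots,d_4$ to the correct sheet-transformed $\Delta$. The analytic continuation to all of $\cR$ is automatic once the identities are established as polynomial identities in $q_P,q_S$ with $\gamma_j$-valued coefficients, since both sides are meromorphic on $\cR$ and agree on a set of uniqueness.
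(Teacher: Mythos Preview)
Your proposal is correct and follows precisely the computation the paper has in mind: the paper itself gives no explicit proof of this lemma, stating it immediately after introducing the decompositions $\hat a(\psi_\bullet^\pm),\hat b(\psi_\bullet^\pm)$ in terms of the $\gamma_j$'s and the determinants $d_1,\dots,d_4,\cP,\cS$, with only the phrase ``Revisiting the mappings $w_P$, $w_S$ and $w_{SP}$ once again, we find''. Your expansion of the numerators in (\ref{detref}) via the $\gamma_j$-decomposition, followed by matching against $\Delta(w_P(\xi))=d_1-q_Pd_2+q_Sd_3-q_Pq_Sd_4$ and $\Delta(w_S(\xi))=d_1+q_Pd_2-q_Sd_3-q_Pq_Sd_4$, is exactly the intended verification.
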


\begin{remark}
In the homogeneous half space case, when $H = 0$, 
\begin{multline*}
    \mR = \frac{1}{\Delta_0(\xi)}\ma -\Delta_0(w_P(\xi))&q_S\cS(\xi) \\  -q_P\cP(\xi)&-\Delta_0(w_S(\xi))\am
\\
    =\frac{1}{\Delta_0(\xi)}\ma -\Delta_0(w_P(\xi))&-\mu_0^2q_S4|\xi|\left(\frac{\omega^2}{\mu}-2 \xi^2\right)\\\mu_0^2q_P4|\xi|\left(\frac{\omega^2}{\mu}-2 \xi^2\right) &-\Delta_0(w_S(\xi))\am,
\end{multline*}
where
\begin{align*}
  \Delta_0(w_P(\xi)) = \Delta_0(w_S(\xi))=\ii \mu_0^2 \left(\left(\frac{\omega^2}{\mu{}_0}-2 \xi^2\right)^2-4 \xi^2q_Pq_S\right).
\end{align*}
We have that $\det\mR = 1$, which is not true in the general inhomogeneous case.
\end{remark}

We note that the elements of the reflection matrix can be expressed in
terms of the entire functions through (\ref{eq:JtoEnt}).

\subsection{ Representation of reflection matrix in terms of boundary matrix}

Equations (\ref{Cond1})-(\ref{Cond2}) can be conveniently written in matrix form,
$$\mB \mR=- \ma \hat{a}(\psi_P^+) &\hat{a}(\psi_S^+)\\ 
\hat{b}(\psi_P^+) &\hat{b}(\psi_S^+)\am.$$
We then obtain

\begin{lemma} We have the representation
\begin{equation}
   \mR(\xi) = -\mB^{-1}(\xi) \mB(w_{PS}(\xi)) ,\qq
\xi \in \cR.
\end{equation}
The determinants of the reflection matrix and the boundary matrix are related according to
$$
   \det\mR(\xi)
   = \frac{\Delta(w_{PS}(\xi))}{\Delta(\xi)}
   = (\Delta(\xi))^{-2}
   \left(\Delta(w_P(\xi)) \Delta(w_S(\xi)) + q_P(\xi) q_S(\xi) \cS(\xi) \cP(\xi) \right) .
$$
\end{lemma}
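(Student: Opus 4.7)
The plan is to observe that the matrix equation $\mB\mR = -\begin{pmatrix}\hat a(\psi_P^+) & \hat a(\psi_S^+) \\ \hat b(\psi_P^+) & \hat b(\psi_S^+)\end{pmatrix}$, already assembled from the boundary conditions (\ref{Cond1})--(\ref{Cond2}), has its right-hand side equal to $-\mB(w_{PS}(\xi))$, after which both claimed formulas follow in one or two lines.

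First I would identify the right-hand side with $-\mB(w_{PS}(\xi))$. By Lemma~\ref{l-conjJ}, the mapping $w_{PS}$ swaps the Jost solutions, $\psi_P^\pm(Z,w_{PS}(\xi)) = \psi_P^\mp(Z,\xi)$ and $\psi_S^\pm(Z,w_{PS}(\xi)) = \psi_S^\mp(Z,\xi)$. The crucial point to check is that the traction functionals $\hat a(\cdot)$ and $\hat b(\cdot)$, defined in (\ref{tract1})--(\ref{tract2}), depend on $\xi$ only through the projection $\Pi(\xi)\in\mathbb{C}$, and $\Pi(w_{PS}(\xi)) = \Pi(\xi)$; hence applying $\hat a$, $\hat b$ to the swapped Jost solutions yields exactly $\hat a(\psi_P^+)(\xi)$, $\hat a(\psi_S^+)(\xi)$, $\hat b(\psi_P^+)(\xi)$, $\hat b(\psi_S^+)(\xi)$. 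Therefore
\[
   \mB(w_{PS}(\xi)) = \begin{pmatrix} \hat a(\psi_P^+)(\xi) & \hat a(\psi_S^+)(\xi) \\[2pt] \hat b(\psi_P^+)(\xi) & \hat b(\psi_S^+)(\xi) \end{pmatrix}.
\]
Inverting $\mB(\xi)$ (valid at any $\xi\in\cR$ that is not a wavenumber resonance, i.e.\ $\Delta(\xi)\neq 0$) then gives $\mR(\xi) = -\mB^{-1}(\xi)\,\mB(w_{PS}(\xi))$, establishing the first assertion. The identity extends to resonant $\xi$ by continuity, or by interpreting the formula in the rational function sense.

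Next I would take determinants. Since $\mR$ is $2\times 2$, $\det(-\mB^{-1}\mB(w_{PS}(\xi))) = \det\mB(w_{PS}(\xi))/\det\mB(\xi) = \Delta(w_{PS}(\xi))/\Delta(\xi)$, yielding the first equality. For the second equality, I would invoke the preceding lemma, which states $\Delta(\xi)\Delta(w_{PS}(\xi)) - \Delta(w_P(\xi))\Delta(w_S(\xi)) = q_P q_S \cP\cS$; solving for $\Delta(w_{PS}(\xi))$ and dividing by $\Delta(\xi)$ gives exactly $(\Delta(\xi))^{-2}\bigl(\Delta(w_P(\xi))\Delta(w_S(\xi)) + q_P q_S \cS\cP\bigr)$.

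The only delicate step is the bookkeeping in the first paragraph: one must verify carefully that what is called ``$\mB$ evaluated at $w_{PS}(\xi)$'' really agrees with the matrix of tractions of the ``upgoing'' Jost solutions at $\xi$, which rests on the observation that $\hat a$ and $\hat b$ are functions on the base $\mathbb{C}$ (not on $\cR$), while the swap of $\pm$ comes entirely from the sheet action of $w_{PS}$ on the Jost data. Once that identification is made, the rest of the proof is essentially algebraic bookkeeping using the previously established identity for $\Delta(\xi)\Delta(w_{PS}(\xi))$.
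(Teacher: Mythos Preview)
Your proposal is correct and follows the same route the paper implicitly takes: the matrix identity $\mB\mR = -\begin{pmatrix}\hat a(\psi_P^+)&\hat a(\psi_S^+)\\\hat b(\psi_P^+)&\hat b(\psi_S^+)\end{pmatrix}$ is stated immediately before the lemma, and the recognition of the right-hand side as $-\mB(w_{PS}(\xi))$ via Lemma~\ref{l-conjJ} together with the projection-invariance of $\hat a,\hat b$ is exactly the intended step. The determinant chain via Lemma~4.5 (which does precede this lemma in the paper) is also the natural argument; alternatively one may read the second expression for $\det\mR$ directly from Lemma~\ref{lem:RDel} by computing $R_2\widetilde R_2 + q_Pq_S R_1\widetilde R_1$, but this amounts to the same thing.
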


\medskip

\noindent
On the branch cut, $\xi\in\left[-\frac{\omega}{\sqrt{\sigma_0}},\frac{\omega}{\sqrt{\sigma_0}}\right]$, $\omega>0,$
$$
   \mR(\xi) = \mB^{-1}(\xi) \ma 1 & 0 \\ 0 & -1\am \overline{\mB(\xi)} ,
$$
which follows immediately from (\ref{Bconj_real}).

\subsection{Flux normalization and the Rayleigh determinant revisited}

We introduce the flux-normalized Jost solutions,
$$
   \widetilde{\psi}^{\pm}_P = -(\omega q_P)^{-1/2} \psi^{\pm}_P ,\qq
   \widetilde{\psi}^{\pm}_S =-\ii  (\omega q_S)^{-1/2} \psi^{\pm}_S
$$
and then
$$
   \tilde{g}_{P}^+=-\left(\omega q_P\right)^{-1/2}g_{P}^+,\qq \tilde{g}_{S}^+=-\ii \left(\omega q_S\right)^{-1/2}g_{S}^+ .
$$
The reflection matrix then takes the form
$$
 \widetilde{\mR} =
   \ma R_2 & \ii \sqrt{q_P q_S} \widetilde{R}_1 \\ \\
        \ii \sqrt{q_P q_S} R_1 & \widetilde{R}_2 \am .
$$
 The fundamental property of the Rayleigh boundary value problem is that $\widetilde{\mR}(\xi)$ is   unitary  for $\xi$ real and  $\Im q_S(\xi) = 0.$ So for $\xi\in\left[-\frac{\omega}{\sqrt{\sigma_0}}\frac{\omega}{\sqrt{\s_0}}\right]$,  $\omega>0,$
$$\frac{1}{|\Delta(\xi)|^2}\ma -\Delta(w_P(\xi))&\ii \sqrt{q_Pq_S}\cS(\xi) \\  \\ \ii \sqrt{q_Pq_S}\cP(\xi)&-\Delta(w_S(\xi))\am\ma \overline{-\Delta(w_P(\xi))}&\overline{\ii \sqrt{q_Pq_S}\cP(\xi)} \\  \\ \overline{\ii \sqrt{q_Pq_S}\cS(\xi)}&-\overline{\Delta(w_S(\xi))}\am=I_2 ,$$
which is equivalent to the following  identities:
\begin{equation} \label{RUid-1}
   |\sqrt{q_Pq_S}|^2|\cS(\xi)|^2+|\Delta(w_P(\xi))|^2=|\Delta(\xi)|^2 ,
\end{equation}
\begin{equation} \label{RUid-2}
   |\sqrt{q_Pq_S}|^2|\cP(\xi)|^2+|\Delta(w_S(\xi))|^2=|\Delta(\xi)|^2
\end{equation}
and
\begin{equation} \label{RUid-3}
   \ii \overline{\sqrt{q_Pq_S}}\Delta(w_P(\xi))\overline{\cP(\xi)}-\ii\sqrt{q_Pq_S}\overline{\Delta(w_S(\xi))}\cS(\xi)=0 .
\end{equation}
Here, by Lemma \ref{l-prop_delta},
\begin{align*}|\Delta|^2&=(d_1+q_Pd_2+q_Sd_3+q_Pq_Sd_4)(-d_1+q_Pd_2+q_Sd_3-q_Pq_Sd_4)\\
&=(q_Pd_2+q_Sd_3)^2-(d_1+q_Pq_Sd_4)^2,\\
|\Delta(w_P(\xi))|^2&= (d_1-q_Pd_2+q_Sd_3-q_Pq_Sd_4)(-d_1-q_Pd_2+q_Sd_3+q_Pq_Sd_4)\\
&=(q_Pd_2-q_Sd_3)^2-(d_1-q_Pq_Sd_4)^2,\\
|\Delta(w_S(\xi))|^2&= (d_1+q_Pd_2-q_Sd_3-q_Pq_Sd_4)(-d_1+q_Pd_2-q_Sd_3+q_Pq_Sd_4)\\
&=(q_Pd_2-q_Sd_3)^2-(d_1-q_Pq_Sd_4)^2=|\Delta(w_P(\xi))|^2.
\end{align*}
Using (\ref{RUid-1}) and (\ref{RUid-2}), we find that $\cS^2(\xi) = \cP^2(\xi)$. Moreover

\begin{lemma}  For $\xi\in\left[-\frac{\omega}{\sqrt{\s_0}},\frac{\omega}{\sqrt{\s_0}}\right],$  $\omega>0,$ the following holds true, $$\Delta(w_P(\xi))=- \overline{\Delta(w_S(\xi))},\qq \Delta(\xi)=- \overline{\Delta(w_{PS}(\xi))}.$$
\end{lemma}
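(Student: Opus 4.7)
The plan is to prove both identities by direct substitution, using the expansion of the Rayleigh determinant from (\ref{Rd}) together with the symmetry data collected in Lemma~\ref{l-prop_delta}.

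First, I would recall that on the cut $\xi \in \left[-\frac{\omega}{\sqrt{\sigma_0}},\frac{\omega}{\sqrt{\sigma_0}}\right]$ we have $|\xi| \leq \frac{\omega}{\sqrt{\sigma_0}} < \frac{\omega}{\sqrt{\mu_0}}$ (since $\sigma_0 = \lambda_0 + 2\mu_0 > \mu_0$ by strong ellipticity), so both quasimomenta
$$
   q_P(\xi) = \pm\left|\tfrac{\omega^2}{\sigma_0} - \xi^2\right|^{1/2},
   \qquad
   q_S(\xi) = \pm\left|\tfrac{\omega^2}{\mu_0} - \xi^2\right|^{1/2}
$$
are real (with signs fixed by (\ref{oncuts})). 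The maps $w_P$, $w_S$, $w_{PS}$ act on $\cR$ by flipping $q_P$, $q_S$, or both, respectively, while the entire functions $d_1,\ldots,d_4$ depend only on the projection $\Pi(\xi)$.

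Next, from the representation (\ref{Rd}), namely $\Delta(\xi) = d_1 + q_P d_2 + q_S d_3 + q_P q_S d_4$, I would record the four versions
\begin{align*}
   \Delta(\xi)          &= d_1 + q_P d_2 + q_S d_3 + q_P q_S d_4 ,\\
   \Delta(w_P(\xi))     &= d_1 - q_P d_2 + q_S d_3 - q_P q_S d_4 ,\\
   \Delta(w_S(\xi))     &= d_1 + q_P d_2 - q_S d_3 - q_P q_S d_4 ,\\
   \Delta(w_{PS}(\xi))  &= d_1 - q_P d_2 - q_S d_3 + q_P q_S d_4 .
\end{align*}
By Lemma~\ref{l-prop_delta} on the cut we have $d_1, d_4 \in \ii\R$ and $d_2, d_3 \in \R$, and since $q_P, q_S \in \R$, taking complex conjugates simply flips the signs of the $d_1$- and $d_4$-terms. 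Thus
$$
   -\overline{\Delta(w_S(\xi))} = -(-d_1 + q_P d_2 - q_S d_3 + q_P q_S d_4)
   = d_1 - q_P d_2 + q_S d_3 - q_P q_S d_4 = \Delta(w_P(\xi)),
$$
and similarly
$$
   -\overline{\Delta(w_{PS}(\xi))} = -(-d_1 - q_P d_2 - q_S d_3 - q_P q_S d_4)
   = \Delta(\xi),
$$
which yields both claimed identities.

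The proof is essentially bookkeeping; the only place where anything can go wrong is the interplay between the conjugation rules for $d_1,\ldots,d_4$ on the cut and the sign flips induced by $w_P, w_S, w_{PS}$. These are already packaged in Lemma~\ref{l-conjJ} and the first part of Lemma~\ref{l-prop_delta}, so no additional analytic input is required.
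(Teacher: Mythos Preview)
Your proof is correct and follows essentially the same approach as the paper: both use the decomposition $\Delta = d_1 + q_P d_2 + q_S d_3 + q_P q_S d_4$, the fact that $q_P,q_S\in\R$ on this cut, and the conjugation properties $d_1,d_4\in\ii\R$, $d_2,d_3\in\R$ from Lemma~\ref{l-prop_delta} to compare $\Delta(w_\bullet(\xi))$ with the complex conjugates directly. Your write-up is in fact a bit more explicit than the paper's.
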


\begin{proof}
As for  $\xi\in\left[-\frac{\omega}{\sqrt{\s_0}},\frac{\omega}{\sqrt{\s_0}}\right]$, we have that $d_1,d_4\in\ii\R$ and $d_2,d_3,q_P,q_S\in\R$,  identity $\Delta(w_P(\xi))=- \overline{\Delta(w_S(\xi))}$ follows from 
$$- \overline{\Delta(w_S(\xi))} =d_1-q_Pd_2+q_Sd_3-q_Pq_Sd_4=d_1-q_Pd_2+q_Sd_3-q_Pq_Sd_4.$$
Identity $\Delta(\xi)=- \overline{\Delta(w_{PS}(\xi))}$
follows from $$ d_1+q_Pd_2+q_Sd_3+q_Pq_Sd_4=\overline{-d_1+q_Pd_2+q_Sd_3-q_Pq_Sd_4}=-\overline{(d_1-q_Pd_2-q_Sd_3+q_Pq_Sd_4)}.$$
\end{proof}

\begin{corollary}We have $\cS(\xi)=-\cP(\xi)$ for $\xi\in\C,$  $\omega>0.$ 
\end{corollary}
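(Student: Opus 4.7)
My proposal proceeds in two stages: first extract the weaker identity $\cS^2=\cP^2$ (already essentially noted by the authors), then use the remaining unitarity relation (\ref{RUid-3}) together with the preceding lemma to fix the sign.

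Stage 1 (sign up to ambiguity). On the interval $\xi\in\left[-\tfrac{\omega}{\sqrt{\sigma_0}},\tfrac{\omega}{\sqrt{\sigma_0}}\right]$ we have $q_P,q_S>0$ and, by Lemma \ref{l-prop_delta}, $\cP,\cS\in\R$. Subtracting (\ref{RUid-1}) from (\ref{RUid-2}) and using the identity $|\Delta(w_S(\xi))|^2=|\Delta(w_P(\xi))|^2$ computed just before the preceding lemma, I obtain $q_Pq_S\,(\cS^2-\cP^2)=0$, so $(\cS-\cP)(\cS+\cP)\equiv 0$ on the interval. Since the two factors are restrictions of entire functions, their zero sets are discrete unless one of them vanishes identically.

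Stage 2 (fixing the sign). I would exploit the cross identity (\ref{RUid-3}). On the interval $\sqrt{q_Pq_S}\in\R$, so (\ref{RUid-3}) reduces to
\[
   \Delta(w_P(\xi))\,\overline{\cP(\xi)}-\overline{\Delta(w_S(\xi))}\,\cS(\xi)=0.
\]
Substituting the preceding lemma's identity $\Delta(w_P(\xi))=-\overline{\Delta(w_S(\xi))}$ and using $\overline{\cP(\xi)}=\cP(\xi)$ on the interval yields
\[
   \overline{\Delta(w_S(\xi))}\,\bigl[\cS(\xi)+\cP(\xi)\bigr]=0.
\]
Now $\Delta(w_S(\cdot))$ is the restriction of an entire function on $\cR$ that is not identically zero (it reduces to the explicit nonvanishing $\Delta_0(w_S(\xi))=\ii\mu_0^2((\omega^2/\mu_0-2\xi^2)^2-4\xi^2q_Pq_S)$ when $H=0$, and in the general case perturbs smoothly from it), so its zeros on the interval are isolated. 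Hence $\cS+\cP$ vanishes on a dense subset of the interval and, by continuity, throughout it.

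Conclusion. Because $\cS$ and $\cP$ are entire in $\xi$ (they are polynomial combinations of the entire functions $\gamma_1,\ldots,\gamma_8$), the function $\cS+\cP$ is entire, and vanishing on an interval forces $\cS+\cP\equiv 0$ on all of $\C$. The main obstacle is simply removing the sign ambiguity left by $\cS^2=\cP^2$; once one couples (\ref{RUid-3}) with the conjugation identity $\Delta(w_P)=-\overline{\Delta(w_S)}$ the argument is essentially automatic. As a sanity check one may verify the sign on the homogeneous case $H=0$, where direct computation from the explicit $\gamma_i$ gives $\cP_0=-4\mu_0\xi(\omega^2-2\mu_0\xi^2)=-\cS_0$, consistent with the conclusion.
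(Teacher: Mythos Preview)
Your proof is correct and follows essentially the same route as the paper: both use the cross identity (\ref{RUid-3}) together with the conjugation relation $\Delta(w_P)=-\overline{\Delta(w_S)}$ from the preceding lemma to force $\cS+\cP=0$ on the branch cut, then extend to all of $\C$ by analyticity of the entire functions involved. A minor point: by (\ref{oncuts}) the quasimomenta $q_P,q_S$ are real with the \emph{same sign} on the interval (not necessarily both positive), but this does not affect your argument since only $q_Pq_S>0$ and $\sqrt{q_Pq_S}\in\R$ are actually used.
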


\begin{proof}From $\overline{\sqrt{q_Pq_S}}\Delta(w_P(\xi))\overline{\cP(\xi)}-\sqrt{q_Pq_S}\overline{\Delta(w_S(\xi))}\cS(\xi)=0$ and $\cP,\cS \in\R$ for \\ $\xi\in\left[-\frac{\omega}{\sqrt{\s_0}},\frac{\omega}{\sqrt{\s_0}}\right]$, it follows that
$$\overline{\sqrt{q_Pq_S}}\cP(\xi)=-\sqrt{q_Pq_S}\cS(\xi)=0,$$
which by (\ref{oncuts}) using that $\xi\in\left[-\frac{\omega}{\sqrt{\s_0}},\frac{\omega}{\sqrt{\s_0}}\right]$, $q_S$ and $q_P$ have the same sign, implies  $\cS(\xi)=-\cP(\xi)$  on the branch cut $\left[-\frac{\omega}{\sqrt{\s_0}},\frac{\omega}{\sqrt{\s_0}}\right]$, and, hence, everywhere.
\end{proof}

Now, consider $ \xi\in\left(-\frac{\omega}{\sqrt{\mu_0}}, -\frac{\omega}{\sqrt{\sigma_0}}\right)\cup\left(\frac{\omega}{\sqrt{\sigma_0}}, \frac{\omega}{\sqrt{\mu_0}}\right)$,  $\omega>0,$ where $q_S\in\R,$  $q_P\in\ii \R.$ 
Here, \\ $\overline{\Delta(\xi)}=-\Delta(w_S(\xi))$ and as 
$$\widetilde{R}_2=-\frac{\Delta(w_S(\xi))}{\Delta (\xi)},$$
we have $$|\widetilde{R}_2(\xi) |^2=\frac{\Delta(w_S(\xi))}{\Delta (\xi)}\frac{\Delta (\xi)}{\Delta(w_S(\xi))}=1,$$ showing that for  $ \xi\in\left(-\frac{\omega}{\sqrt{\mu_0}}, -\frac{\omega}{\sqrt{\sigma_0}}\right)\cup\left(\frac{\omega}{\sqrt{\sigma_0}}, \frac{\omega}{\sqrt{\mu_0}}\right),$ the amplitude of the $S$-to-$S$ reflection is one.

\section{Resolvent, its analytic continuation and poles} 

 We begin with expressing the Green's function or kernel of the resolvent in terms of the Jost solutions in

\begin{theorem}\label{Th_resolvent}
Let (cf. (\ref{reflected_waves})) 
$g_P^+=\psi_P^++R_2\psi_P^--q_PR_1\psi_S^-,\ g_S^+=\psi_S^++q_S\widetilde{R}_1\psi_P^-+\widetilde{R}_2\psi_S^-$. Then the Green's function for operator 
(\ref{(1a)}) subject to the traction-free boundary condition (with traction defined in (\ref{tract1}), (\ref{tract2})) is given by
\[\lb{resolvent}G(Z,Z',\xi)=\frac{1}{2 \ii \omega^2}\left\{\begin{array}{lc}\frac{1}{q_P}\psi_{P}^-(Z) (g_{P}^+(Z'))^{\rm T}+\frac{1}{q_S}\psi_{S}^-(Z)(g_{S}^+(Z'))^{\rm T},& Z< Z'<0,\\ \\\frac{1}{q_P}g_{P}^+(Z) ( \psi_{P}^-(Z'))^{\rm T}+\frac{1}{q_S}g_{S}^+(Z)(\psi_{S}^-(Z'))^{\rm T},& Z'< Z<0.\end{array} \right.\]  
\end{theorem}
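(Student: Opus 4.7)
The plan is to verify directly that the right-hand side of (\ref{resolvent}) is the integral kernel of the resolvent of $\hat{H}_0(\xi)-\omega^2 I$ with the traction-free condition (\ref{tract1})-(\ref{tract2}) at $Z=0^-$ and the outgoing radiation condition as $Z\to -\infty$. Four items must be checked: (i) $G$ solves the homogeneous Rayleigh equation for $Z\ne Z'$; (ii) $G$ satisfies the traction-free boundary condition at $Z=0^-$; (iii) $G$ has the prescribed downgoing plane-wave behaviour as $Z\to -\infty$; and (iv) the distributional identity $(\hat{H}_0(\xi)-\omega^2)G(\cdot,Z',\xi)=\delta(\cdot-Z')\,I_2$ holds.

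Items (i)-(iii) are immediate from the construction: each branch of (\ref{resolvent}) is a linear combination of the Rayleigh solutions $\psi_{P/S}^-$ and $g_{P/S}^+$, hence annihilated by $(\hat{H}_0(\xi)-\omega^2)$; on $Z\in(Z',0)$ the kernel is a combination of $g_P^+$ and $g_S^+$, which by (\ref{Cond1})-(\ref{Cond2}) satisfy $\hat{a}=\hat{b}=0$ at $Z=0^-$; and on $Z<\min(-H,Z')$ the kernel is a combination of $\psi_P^-,\psi_S^-$, which by Definition~\ref{Jostsol} match the downgoing plane waves on $Z\le -H$.

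Item (iv) is the core. Writing $G=H(Z'-Z)A(Z,Z')+H(Z-Z')B(Z,Z')$ with $A,B$ smooth solutions of the Rayleigh equation in $Z$, and computing $(\hat{H}_0(\xi)-\omega^2)G$ in the distributional sense, the identity reduces to the pair of matrix conditions at $Z=Z'$
\[
B(Z',Z')-A(Z',Z')=0,\qquad P(Z')\bigl(B'_Z(Z',Z')-A'_Z(Z',Z')\bigr)=I_2,
\]
originating from the coefficients of $\delta'(Z-Z')$ and $\delta(Z-Z')$ in $(PG'_Z)'$ respectively (the $N$-contributions in the $\delta$-coefficient cancel once the first condition is used). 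To verify these, introduce the traction vector $T(\psi):=P\psi'-\xi N^T\psi$, which at $Z=0^-$ equals $(\hat{b}(\psi),-\ii\hat{a}(\psi))^T$ by (\ref{tract1})-(\ref{tract2}); a short calculation using $P=P^T$ and $M=M^T$ shows that the bilinear form $W(\phi,\psi):=\phi^T T(\psi)-\psi^T T(\phi)$ is constant in $Z$ for any pair of Rayleigh solutions. Evaluating $W$ on the plane-wave basis on $Z\le -H$ and extending by constancy yields $W(\psi_\bullet^-,\psi_\circ^+)=2\ii\omega^2 q_\bullet\,\delta_{\bullet\circ}$ and $W(\psi_\bullet^\epsilon,\psi_\circ^\epsilon)=0$ for $\epsilon=\pm$, and then via (\ref{reflected_waves}) together with $T(g_\bullet^+)(0)=0$ the constant diagonal Wronskians
\[
W(\psi_\bullet^-,g_\circ^+)=2\ii\omega^2 q_\bullet\,\delta_{\bullet\circ},\qquad W(g_\bullet^+,g_\circ^+)=0,\qquad \bullet,\circ\in\{P,S\}.
\]
The factor $(2\ii\omega^2 q_\bullet)^{-1}$ in (\ref{resolvent}) is precisely the inverse of this diagonal Wronskian. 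The continuity $B-A=0$ then follows from the Lagrangian identities $W(\psi_P^-,\psi_S^-)=W(g_P^+,g_S^+)=0$, combined with the corollary $\cS(\xi)=-\cP(\xi)$ (equivalently $R_1+\widetilde{R}_1=0$) that kills the cross-term in the jump; the derivative jump $P(B'_Z-A'_Z)=I_2$ is the matrix form of the Wronskian identity obtained after the substitution $P\psi'=T(\psi)+\xi N^T\psi$.

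The main obstacle is promoting the scalar Wronskian identities to the $2\times 2$ matrix identities required for the jumps of $G$ and $G'_Z$. A convenient bookkeeping is to arrange the solutions into frames $\Psi^-=[\psi_P^-,\psi_S^-]$, $\mathbf{G}^+=[g_P^+,g_S^+]$, with $Q=\mathrm{diag}(q_P,q_S)$, so that (\ref{resolvent}) reads $G(Z,Z',\xi)=(2\ii\omega^2)^{-1}\Psi^-(Z_<)\,Q^{-1}\,\mathbf{G}^+(Z_>)^T$ (the $Z>Z'$ branch being its transpose, via the symmetry $G(Z,Z')=G(Z',Z)^T$). The matrix Wronskian equation $(\Psi^-)^T T(\mathbf{G}^+)-T(\Psi^-)^T\mathbf{G}^+=2\ii\omega^2 Q$, together with the Lagrangian relations $(\mathbf{G}^+)^T T(\mathbf{G}^+)=T(\mathbf{G}^+)^T\mathbf{G}^+$ and $(\Psi^-)^T T(\Psi^-)=T(\Psi^-)^T\Psi^-$, then encodes both jump conditions. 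The construction holds on the physical sheet at every $\xi$ with $\Delta(\xi)\ne 0$; the poles of $G$ at the zeros of $\Delta$ are precisely the wavenumber resonances addressed in the subsequent analysis.
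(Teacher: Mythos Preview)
Your proof is correct but follows a genuinely different route from the paper. The paper proceeds \emph{constructively}: it evenly extends the Lam\'e parameters to the whole line, invokes Stickler's Green's function formula for $\hat{\mathfrak{H}}_0(\xi)$ on $\R$, computes the matrix Wronskian $W_{1,2}=2\ii\omega^2\,\mathrm{diag}(q_P,q_S)$ explicitly (the even extension is what makes $U_1$ coincide with the plane wave $U_{1,0}$ for $Z\le -H$ and renders this computation tractable), and then corrects by a reflected kernel $G_r$ to enforce the traction-free condition. Your argument is a \emph{direct verification}: you postulate the formula and check homogeneity off the diagonal, the boundary and radiation conditions, and the $\delta$-jump, with the constant bilinear form $W(\phi,\psi)=\phi^{\rm T}T(\psi)-\psi^{\rm T}T(\phi)$ as the central tool. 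The verification route is more self-contained (no artificial extension to $\R$, no appeal to Stickler), while the paper's route has the advantage of \emph{producing} the formula rather than merely confirming it.

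One remark on your continuity step: the appeal to $\cS=-\cP$ (equivalently $R_1+\widetilde{R}_1=0$) is not needed. The Lagrangian relations $W(\psi_\bullet^-,\psi_\circ^-)=W(g_\bullet^+,g_\circ^+)=0$ together with the diagonal cross-Wronskian $W(\psi_\bullet^-,g_\circ^+)=2\ii\omega^2 q_\bullet\,\delta_{\bullet\circ}$ already force $\Psi^-Q^{-1}(\mathbf{G}^+)^{\rm T}$ to be symmetric for every $Z$: in the first-order symplectic formulation with $\hat\Phi=\bigl[\begin{smallmatrix}\Psi^-&\mathbf{G}^+\\ T(\Psi^-)&T(\mathbf{G}^+)\end{smallmatrix}\bigr]$ and $\hat\Phi^{\rm T}J\hat\Phi=\bigl[\begin{smallmatrix}0&2\ii\omega^2 Q\\-2\ii\omega^2 Q&0\end{smallmatrix}\bigr]$, the off-diagonal blocks of $\hat\Phi\hat\Phi^{-1}=I_4$ give precisely $\Psi^-Q^{-1}(\mathbf{G}^+)^{\rm T}=\mathbf{G}^+Q^{-1}(\Psi^-)^{\rm T}$ and $T(\mathbf{G}^+)Q^{-1}(\Psi^-)^{\rm T}-T(\Psi^-)Q^{-1}(\mathbf{G}^+)^{\rm T}=2\ii\omega^2 I_2$. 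This is in fact what your final paragraph asserts; the earlier mention of $\cS=-\cP$ is then redundant. (If instead you expand $\mathbf{G}^+=\Psi^++\Psi^-\mR$ and argue termwise, you would also need the symmetry of $\Psi^-Q^{-1}(\Psi^+)^{\rm T}$ for all $Z$, which is not covered by $\cS=-\cP$ alone; the symplectic argument avoids this detour.)
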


\begin{remark}
The theorem shows that the form of the kernel of the resolvent in the nonhomogeneous case is the same as the one in the constant case (corresponding to  $H=0$, $\lambda(Z)=\lambda_0,$ $\mu(Z)=\mu_0$). That is, we recover the standard formula in this case \cite{Secher1998}. 
\end{remark}

\begin{proof} 
As announced in Section~\ref{sec:2}, we extend 
$\lambda$ and $\mu$ as even functions from $\R_-$ to $\R$ and introduce $\hat{\mathfrak{H}}_0(\xi)$; see also  Remark \ref{r_ext}. Thus we consider the (nonphysical) differential equation on the whole line,
\[\lb{(1a_ext)}
(  \hat{\mathfrak{H}}_0(\xi)-\omega^2)u:=(Pu')'+\xi (Nu'-(N^{\rm T}u)')+(\omega^2-\xi^2 M)u=0,\qq Z\in\R.
\]
Now we follow the construction of Stickler  
\cite{Stickler1986} valid for this equation for general (not necessarily even) Lam{\'e} parameters and $\xi \in\R$. The symmetry of the Lam{\'e} parameters allows us to evaluate the relevant Wronskian, denoted by $W_{12}$, explicitly (see Lemma \ref{l-W} below). As the (nonphysical) resolvent kernel on the whole line does not respect the traction condition at $Z=0$, we subtract a ``reflected'' kernel constructed using the reflected solutions in (\ref{reflected_waves}) to obtain the physical kernel of the physical resolvent.

As in \cite{Stickler1986} we consider the Jost solutions, $U_1$, $U_2$, $U_3$, $U_4$, of (\ref{(1a_ext)}) satisfying the following conditions outside the interval $[-H,H]$:
\begin{align}&
U_1 = U_{1,0} = a_1(\xi) e^{\ii \sigma(\xi) Z},\qq 
U_4 = U_{4,0} = a_4(\xi) e^{-\ii \sigma(\xi) Z}\qq
\text{for}\,\, Z \geq H,\label{>H}\\
&
U_2 = U_{2,0} = a_2(\xi) e^{-\ii \sigma(\xi) Z},\qq 
U_3 = U_{3,0} = a_3(\xi) e^{\ii \sigma(\xi) Z}\qq\text{for}\,\, Z\leq -H,\label{<-H}
\end{align}
where
$$
a_1=\ma -\xi &\ii q_S\\ \ii q_P & -\xi \am,\qq a_4=\ma -\xi &-\ii q_S\\ -\ii q_P & -\xi \am,\qq a_2=a_4,\qq a_3=a_1,\qq \sigma=\ma q_P &0 \\ 0 & q_S\am,
$$
where $U_2$ corresponds to $\Psi$ for $Z \le 0$. We now use the invariance of the differential equation and find that
$$
   U_{1,0}(-Z,-\xi) = -U_{1,0}(Z,\xi) ,\qq 
   U_{4,0}(-Z,-\xi) = -U_{4,0}(Z,\xi)
$$
so that
\begin{align}
   U_1(Z,\xi) = -U_3(-Z,-\xi) ,\qq
   U_4(Z,\xi) = -U_2(-Z,-\xi) .
\end{align}
Moreover, using that $a_m(-\xi) = -a_m(\xi)$, $m = 1,\dots,4$ (recalling that $q_S(-\xi) = -q_S(\xi)$ as was stated in the first line of
(\ref{k-properties10}) and also in (\ref{oncuts})), and that $\sigma(-\xi) = -\sigma(\xi)$ we get 

\begin{lemma}
The Jost solutions $U_1$, $U_2$, $U_3$ and $U_4$ of (\ref{(1a_ext)}) satisfy
\[\lb{symmerty}
   U_1(Z,\xi) = U_{1,0}(Z,\xi)=a_1(\xi) e^{\ii \sigma(\xi) Z} ,\ U_4(Z,\xi) = U_{4,0}(Z,\xi) = a_4(\xi) e^{-\ii \sigma(\xi) Z}\ \text{for}\,\, Z \ge H .
\]
\end{lemma}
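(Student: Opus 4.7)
The plan is to deduce the two stated identities as a direct consequence of the already-derived symmetries $U_1(Z,\xi) = -U_3(-Z,-\xi)$ and $U_4(Z,\xi) = -U_2(-Z,-\xi)$, combined with the known Jost behaviour (\ref{<-H}) for $U_2$ and $U_3$ on $Z\le -H$ and the parities of $a_m$ and $\sigma$ that have just been recorded.

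First I would fix $Z \ge H$, which places $-Z \le -H$ in the region where (\ref{<-H}) applies directly. By (\ref{<-H}),
\[
   U_3(-Z,-\xi) = a_3(-\xi)\, e^{\ii \sigma(-\xi)(-Z)} .
\]
Next I would invoke $a_3(-\xi) = -a_3(\xi)$ (the identity $a_m(-\xi) = -a_m(\xi)$) and $\sigma(-\xi) = -\sigma(\xi)$ to rewrite the right hand side as $-a_3(\xi)\, e^{\ii \sigma(\xi) Z}$. Then, using the symmetry identity $U_1(Z,\xi) = -U_3(-Z,-\xi)$ together with the identification $a_3 = a_1$ from the matrices listed after (\ref{<-H}), I would conclude
\[
   U_1(Z,\xi) = a_1(\xi)\, e^{\ii \sigma(\xi) Z} = U_{1,0}(Z,\xi), \qquad Z \ge H .
\]
An identical computation, now based on $U_4(Z,\xi) = -U_2(-Z,-\xi)$ together with (\ref{<-H}) for $U_2$ and the identification $a_2 = a_4$, produces $U_4(Z,\xi) = a_4(\xi)\, e^{-\ii \sigma(\xi) Z}$ on the same half-line, completing the lemma.

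The argument is purely algebraic, so there is no genuine obstacle; the only substantive input is the invariance of the extended equation (\ref{(1a_ext)}) under $(Z,\xi) \to (-Z,-\xi)$ recorded in Remark \ref{r_ext}, which rests on the even extension of $\lambda$ and $\mu$. This invariance ensures that $-U_3(-\cdot,-\cdot)$ is itself a solution of (\ref{(1a_ext)}), and uniqueness of the Jost solution determined by its prescribed asymptotic form on $Z\ge H$ then forces the equality $U_1 = -U_3(-\cdot,-\cdot)$ globally (and similarly for $U_4$), after which the parity computation above gives the explicit formulas claimed.
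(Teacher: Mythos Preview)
Your proposal is correct and follows essentially the same route as the paper: the text immediately preceding the lemma records the symmetry identities $U_1(Z,\xi)=-U_3(-Z,-\xi)$, $U_4(Z,\xi)=-U_2(-Z,-\xi)$ together with the parities $a_m(-\xi)=-a_m(\xi)$ and $\sigma(-\xi)=-\sigma(\xi)$, and then asserts the lemma as an immediate consequence. Your write-up simply spells out the algebraic substitution in detail, including the identification $a_3=a_1$, $a_2=a_4$, and the uniqueness argument for the global validity of the symmetry identities that the paper leaves implicit.
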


We define the Wronskian
$$W_{n,m}=U_m^{\rm T} P U_\lambda'- \left(U_m^{\rm T}\right)' P U_\lambda+\xi U_m^{\rm T} \left(N-N^{\rm T}\right) U_n,\qq m, n = 1,2,3,4.$$
Then $$W_{n,m}' = 0 . $$
Moreover, 
\begin{align*}
W_{n,n}=0,\qq W^{\rm T}_{n,m}=-W_{m,n} .
\end{align*}
By (\ref{symmerty}), $$ U_1(Z,\xi)=U_{1,0}(Z,\xi)=a_1(\xi) e^{\ii \sigma(\xi) Z}\qq\mbox{for}\,\, Z\leq -H.$$
Then, as the Wronskian $W_{12}$ is constant, we can compute it for $Z\leq -H$ where $U_1,U_2$ are explicitly  known and we get

\begin{lemma}\label{l-W}
The $1,2$-component of the Wronskian is given by $$W_{1,2}=2\ii\omega^2 \ma q_P & 0\\ 0 & q_S\am.
$$
\end{lemma}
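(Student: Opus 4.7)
The plan is to use the constancy $W_{1,2}'=0$ just noted, so that $W_{1,2}$ can be evaluated in the region $Z\le -H$, where $U_1$ and $U_2$ are known explicitly by (\ref{symmerty}) and (\ref{<-H}) to be $U_1=a_1 e^{\ii\sigma Z}$ and $U_2=a_2 e^{-\ii\sigma Z}$. Substituting these into the defining formula for $W_{1,2}$ and using that $\sigma$ and $e^{\pm\ii\sigma Z}$ are diagonal and mutually commuting, one obtains
\begin{equation*}
W_{1,2} = e^{-\ii\sigma Z}\,\mathcal{A}\,e^{\ii\sigma Z},\qquad \mathcal{A} := \ii\bigl(\sigma\,a_2^{\rm T} P a_1 + a_2^{\rm T} P a_1\,\sigma\bigr) + \xi\,a_2^{\rm T}(N-N^{\rm T})a_1.
\end{equation*}
The constancy of $W_{1,2}$ in $Z$ forces $\mathcal{A}$ to commute with $e^{\pm\ii\sigma Z}$, hence to be diagonal, and then $W_{1,2}=\mathcal{A}$.

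I would then compute the two matrix products $a_2^{\rm T} P a_1$ and $a_2^{\rm T}(N-N^{\rm T})a_1$ entry by entry from the explicit expressions of $a_1$, $a_2$, $P$, and from the observation that $N-N^{\rm T}=(\lambda_0+\mu_0)J$, where $J$ is the antisymmetric matrix with $J_{12}=-1$, $J_{21}=1$. The quasimomentum relations $(\lambda_0+2\mu_0)q_P^2=\omega^2-(\lambda_0+2\mu_0)\xi^2$ and $\mu_0 q_S^2=\omega^2-\mu_0\xi^2$ reduce the diagonal entries of $a_2^{\rm T} P a_1$ to $\omega^2\mp(\lambda_0+\mu_0)\xi^2$, while its off-diagonal entries come out as $\pm\ii\xi\bigl[(\lambda_0+2\mu_0)q_P-\mu_0 q_S\bigr]$. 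Correspondingly, $\xi\,a_2^{\rm T}(N-N^{\rm T})a_1$ has diagonal entries $\pm 2\ii(\lambda_0+\mu_0)\xi^2 q_\bullet$ and off-diagonal entries $\pm(\lambda_0+\mu_0)\xi(q_P q_S-\xi^2)$.

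The crucial identity producing the off-diagonal cancellation in $\mathcal{A}$ is
\begin{equation*}
(q_P+q_S)\bigl[(\lambda_0+2\mu_0)q_P-\mu_0 q_S\bigr] = (\lambda_0+\mu_0)(q_P q_S-\xi^2),
\end{equation*}
itself an immediate consequence of the same two quasimomentum relations. It guarantees that the off-diagonal contributions from $\ii(\sigma a_2^{\rm T} P a_1 + a_2^{\rm T} P a_1\,\sigma)$ exactly cancel those from $\xi\,a_2^{\rm T}(N-N^{\rm T})a_1$. On the diagonal, the $(\lambda_0+\mu_0)\xi^2 q_\bullet$ contributions from the two pieces cancel as well, leaving $\mathcal{A}=\operatorname{diag}\bigl(2\ii\omega^2 q_P,\,2\ii\omega^2 q_S\bigr)=2\ii\omega^2\sigma$, which is the claim.

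The main obstacle is purely the $2\times 2$ bookkeeping of the three matrix products together with the recognition of the displayed off-diagonal identity; once that identity is in hand, both the off-diagonal cancellation and the collapse of the diagonal to $2\ii\omega^2 q_\bullet$ follow by direct substitution of the quasimomentum relations.
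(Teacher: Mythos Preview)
Your proof is correct and follows exactly the approach indicated in the paper: use the constancy $W_{1,2}'=0$ to evaluate the Wronskian in the region $Z\le -H$, where both $U_1$ and $U_2$ are given by the explicit plane-wave formulas, and then carry out the $2\times2$ matrix computation. The paper states only the idea and the result without displaying the intermediate calculation, whereas you supply the explicit entrywise computation and the off-diagonal identity $(q_P+q_S)[(\lambda_0+2\mu_0)q_P-\mu_0 q_S]=(\lambda_0+\mu_0)(q_P q_S-\xi^2)$ that makes the cancellation transparent.
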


\noindent
Now, Stickler \cite{Stickler1986} found the following formula  for the Green's function on the whole line 
\begin{equation}\lb{(24)}G_i(Z,Z',\xi)=\left\{\begin{array}{lr}
U_1(Z,\xi) W_{1,2}^{-1}(\xi)U_2^{\rm T}(Z',\xi), & Z> Z',\\
U_2(Z,\xi) (W_{1,2}^{-1} (\xi))^{\rm T} U_1^{\rm T}(Z',\xi), & Z< Z'.
\end{array}\right.
\end{equation}

We return to the vanishing traction boundary condition at $Z=0$, and impose it by introducing a contribution $G_r$ to the resolvent kernel.
We write $U_2=[u_{2,P},u_{2,S}]=[\psi_P^-\,\,\psi_S^-]$, $U_3=[u_{3,P},u_{3,S}]=[\psi_P^+\,\,\psi_S^+].$  The reflected waves (\ref{reflected_waves} )  at $Z=0$ (satisfying the Neumann condition) are then given by
$$g_P^+=u_{3,P}+R_2u_{2,P}-q_PR_1u_{2,S}, \qq g_S^+=u_{3,S}+q_S\widetilde{R}_1u_{2,P}+\widetilde{R}_2u_{2,S} .$$
We put $G_1:=[g_P^+\,\,g_S^+]$ and $U_2=\Psi^{-}.$  Then we may follow the standard refection method \cite[Section~2, p.706]{Secher1998} and the Appendix in \cite{DermenjianGaitan2000} in the case of constant Lam{\'e} parameters. On the half-line $\R^-$ we construct the resolvent (respecting the Neumann condition) in the form
$G=G_i+G_r,$ where $G_r$ satisfies
$(\hat{H}_0(\xi)-\omega^2) G_r=0$ and
$$ \hat{a}(G_r)=-\hat{a}(G_i),\qq \hat{b}(G_r)=-\hat{b}(G_i).$$
Carrying out the necessary calculations, we find that
\begin{equation}\label{24BC}G(Z,Z',\xi)=\left\{\begin{array}{lr}
G_1(Z,\xi) W_{12}^{-1}(\xi)U_2^{\rm T}(Z',\xi), & Z'<Z<0,\\
U_2(Z,\xi) (W_{12}^{-1} (\xi))^{\rm T}G_1^{\rm T}(Z',\xi), & Z< Z'<0.
\end{array}\right.
\end{equation}
Now, using the explicit form of the Wronskian $W_{1,2}$ given in Lemma \ref{l-W} we can write (\ref{24BC}) as in the statement of the theorem.
\end{proof} 

\medskip

\noindent
The explicit formula for the kernel of the resolvent, $G(Z,Z',\xi)$, and the analytic properties of the Jost solutions and by implication the boundary matrix presented in the next section, show that the resolvent has an analytic continuation into the Riemann surface $\cR$. Moreover, (apart of the branch points for $q_P$, $q_S$) the poles of the resolvent are exactly the roots of the Rayleigh determinant, $\Delta$, that is, the wavenumber resonances.

\section{Analytic properties of the boundary matrix}

Here, we finally study the analytic properties of the boundary matrix $\mB(\xi)$. To this end, we transform the Rayleigh system to a Schr{\"o}dinger form using the  Markushevich substitution (see Appendix~\ref{sec:Mark} and previous work \cite{MdHoopIantchenko2022}). This substitution relates the Jost solutions $\Psi^\pm(Z,\xi)$ to the Jost solutions $\bF^\pm(x,\xi)$ of the Schr{\"o}dinger type problem \cite{MdHoopIantchenko2022} with $x = -Z \in \R_+$, 
\[\lb{MarkTr}\Psi^\pm=[\psi_P^\pm\,\,\psi_S^\pm]=-\left(\xi\frac{\mu_0}{\omega^2}\right)^{-1}\ma 1  & 0\\ 0 & -1\am \gM^{-1}  \left(\bF^\mp\right) \ma 1 & 0\\ 0 & -1\am,\]
where first-order matrix-valued differentional operator $\gM^{-1}$ is defined in (\ref{1rw(2.10)new}). The Schr{\"{o}}dinger form is given by
\[\lb{MarkSchr}
\left\{\begin{array}{rl}-F''+VF+Q_0F&=-\xi^2F,\qq x\in\R_+\\
F'+\Theta(\xi) F&=0,\qq x=0 .
\end{array}\right.
\]
Here, $$V(x) = Q(x) - Q_0(x)$$ stands for the perturbation potential which satisfies $V(x) = 0$ for $x \geq H$. The potential $Q$ and the reference (background) potential $Q_0$ are defined in (\ref{1rw(2.8)new}) and (\ref{Q0}), respectively; $\Theta$ is defined in (\ref{1rw(2.15)new}).

The boundary matrix $\mB(\xi)$ is explicitly related to the Jost function, given by  $$\bF_\Theta (\xi):=\bF'(0,\xi)+\Theta(\xi) \bF(0,\xi),\qq \bF = \bF^+;$$ that is,
\begin{align} \label{BFTheta}
 \bF_\Theta(\xi) =\frac{1}{ 2\mu(0)\omega^2}\ma -\mu(0)& 0 \\ \\ -2\mu_0\frac{\mu'(0)}{\mu(0)} &2\mu_0\xi  \am \mB(\xi) \ma \ii & 0 \\ \\ 0 & -1 \am.
\end{align}
Hence, analytic properties of the Jost function $\bF_\Theta(\xi)$, which we derive below using the Schr{\"o}dinger form of equation in (\ref{MarkSchr}), are directly translated to those of $\mB(\xi)$.  

\subsection{Analytic properties of the Jost solutions on the Riemann surface}

The matrix Jost solutions of (\ref{MarkSchr}) are determined by (radiation) boundary conditions
$$\bF^+(x,\xi)=\left[F^+_P(x,\xi)\,\, F^+_S(x,\xi)\right]=\bF^+_0(x,\xi)=[F^+_{P,0}(x,\xi)\,\, F^+_{S,0}(x,\xi)],\qq x\geq H,$$ 
where $F^+_{P,0},$ $F^+_{S,0}$ are solutions to (\ref{MarkSchr}) for $x>H$ of the form 
\begin{align*}&F^\pm_{P,0}(x,\xi)= \left(\begin{array}{c}
\left(-\frac{c_0}{2}G_{11}^H(x-H)+G_{21}^H\right) \pm \ii q_P(\xi)\frac{\mu_0}{\omega^2}G_{11}^H 
\\ \\ 
\left(-\frac{c_0}{2}G_{12}^H(x-H)+G_{22}^H\right) \pm \ii q_P(\xi)\frac{\mu_0}{\omega^2}G_{12}^H
\end{array}\right)e^{\pm \ii x q_P(\xi)},\qq c_0 = \frac{\lambda_0 + \mu_0}{\lambda_0 + 2 \mu_0},\\
& F^\pm_{S,0}(x,\xi)=-\frac{\mu_0\xi}{\omega^2}
\left(\begin{array}{c}
G_{11}^H
 \\  \\
G_{12}^H
\end{array}\right)e^{\pm \ii x q_S(\xi)},
\end{align*}
in which $G^H$ is given in Appendix~\ref{sec:Mark}. They are defined on the cut complex plane, $\cK$.

The Jost solutions satisfy the Volterra type integral equation, writing all the arguments noting that $V$ does not depend on $\xi$
\[\lb{Volterra}\bF^\pm(x,\xi)=\bF_{0}^\pm(x,\xi)-\int_x^\infty \bG(x,y;\xi) V(y)\bF^\pm(y,\xi)dy,\]
where $\bG(x,y;\xi)$ is the Green's function; each column of $\bG(.,y;\xi)$  satisfies \[\lb{AStLunpert}-\bF''+Q_0\bF=-\xi^2\bF\]
and the conditions $$\bG(x,x;\xi)={\mathbf 0},\qq\frac{\partial}{\partial x}\bG(x,y;\xi)|_{y=x}={\mathbf I}_2.$$
 The expression for
$Q_0$ is
\begin{align*}
  Q_0(x) =& \omega^2 \ma \displaystyle - \frac{1}{\mu_0} & 0 \\
  0 & \displaystyle -\frac{1}{\lambda_0 + 2 \mu_0} \am
\nonumber\\[0.25cm] &\qquad
  + \omega^2\frac{c_0}{\mu_0}
  \ma \displaystyle
  -G_{12}^H \left[-\frac{c_0}{2}G_{11}^H(x - H) + G_{21}^H\right] &
  \displaystyle 
  G_{11}^H \left[-\frac{c_0}{2} G_{11}^H(x - H) + G_{21}^H\right]
  \\[0.25cm]
  \displaystyle
  -G_{12}^H \left[-\frac{c_0}{2} G_{12}^H(x - H) + G_{22}^H\right] &
  \displaystyle 
  G_{12}^H \left[-\frac{c_0}{2} G_{11}^H(x - H) + G_{21}^H\right]
  \am.
\end{align*}
The Green's function is entire in $\xi\in\C$ and has the form given in

\begin{lemma}\label{l-ABC}
We have
\begin{multline}\bG(x,y;\xi)
= \bA(x)\frac{\sin((x-y)q_P(\xi))}{q_P(\xi)}
\\
+\bB(y)\frac{\sin((x-y)q_S(\xi))}{q_S(\xi)}+\bC\frac{\cos( (x-y)q_S(\xi))-\cos((x-y) q_P(\xi))}{\omega^2},
\end{multline}
where
\begin{align}
\bA(x) =& \left(\begin{array}{cc} G_{12}^H\left[\frac{c_0}{2}G_{11}^H(x-H)-G_{21}^H\right] &
G_{11}^H\left[-\frac{c_0}{2}G_{11}^H(x-H)+G_{21}^H\right]
\\ \\
G_{12}^H\left[\frac{c_0}{2}G_{12}^H(x-H)-G_{22}^H \right] &G_{11}^H\left[-\frac{c_0}{2}G_{12}^H(x-H)+G_{22}^H\right]
\end{array}\right),
\\
\bB(y) =& \left(\begin{array}{cc}
 G_{11}^H\left[\frac{c_0}{2}(-y+H)G_{12}^H+G_{22}^H\right] & -G_{11}^H\left[\frac{c_0}{2}(-y+H)G_{11}^H+G_{21}^H\right]
 \\ \\
 G_{12}^H\left[\frac{c_0}{2}(-y+H)G_{12}^H+G_{22}^H\right] & -G_{12}^H\left[\frac{c_0}{2}(-y+H)G_{11}^H+G_{21}^H\right]
\end{array}\right),
\\
\bC =& \left(\begin{array}{cc}
\mu_0G_{12}^HG_{11}^H &-\mu_0(G_{11}^H)^2 
\\ \\
\mu_0(G_{12}^H)^2 &-\mu_0G_{12}^HG_{11}^H 
\end{array}\right)
\end{align}
with the property that
\[\label{ABC}
\bA(x)+\bB(y) +\bC\cdot(y-x)\frac{ c_0}{2\mu_0}=\mathbf{I}_2
.\]
\end{lemma}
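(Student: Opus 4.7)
The plan is to verify the stated formula by direct substitution, confirming the three defining properties of $\bG$ --- continuity $\bG(x,x;\xi)=\mathbf{0}$, normalization $\partial_x\bG(x,y;\xi)|_{y=x}=\mathbf{I}_2$, and the equation $\partial_x^2\bG=(Q_0(x)+\xi^2)\bG$ satisfied by each column --- together with the algebraic identity (\ref{ABC}). The ansatz is motivated by the entire combinations $\tfrac{1}{2}(F^+_{\bullet,0}+F^-_{\bullet,0})$ and $\tfrac{1}{2\ii q_\bullet}(F^+_{\bullet,0}-F^-_{\bullet,0})$ of the reference Jost solutions, which give a $\cos/\sin$-type fundamental system associated to the quasimomenta $q_P$ and $q_S$: the $S$-amplitude $b=(G_{11}^H,G_{12}^H)^{\rm T}$ is $x$-independent while the $P$-amplitude $a(x)$ is affine in $x$, so a Green's function assembled from these solutions naturally takes the stated form with $\bA$ depending on $x$, $\bB$ on $y$, and $\bC$ constant.

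The structural input is a rank-one factorization of each coefficient matrix,
$$\bC=\mu_0\,b\,(G_{12}^H,-G_{11}^H),\qq \bA(x)=a(x)\,(-G_{12}^H,G_{11}^H),\qq \bB(y)=b\,(\beta_2(y),-\beta_1(y)),$$
with $\beta_1(y)=\tfrac{c_0}{2}(H-y)G_{11}^H+G_{21}^H$ and $\beta_2(y)=\tfrac{c_0}{2}(H-y)G_{12}^H+G_{22}^H$, read off from the definitions. Identity (\ref{ABC}) then follows by an entry-by-entry computation: $\bA(x)+\bB(y)$ decomposes as a constant-in-$(x,y)$ diagonal part equal to $\det G^H\cdot\mathbf{I}_2=\mathbf{I}_2$ plus a linear-in-$(x-y)$ piece equal to $\tfrac{c_0}{2\mu_0}(x-y)\bC$, which exactly cancels the contribution $\tfrac{c_0}{2\mu_0}(y-x)\bC$. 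Here we invoke $\det G^H=1$, a standard property of the transfer matrix of the Markushevich transform (cf.~Appendix~\ref{sec:Mark}), together with the definition $c_0=(\sigma_0-\mu_0)/\sigma_0$.

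Continuity is immediate, since both $\sin$-factors and the $\cos$-difference vanish at $y=x$. Differentiating the ansatz in $x$ and setting $y=x$, the $\sin$-terms contribute $\bA(x)+\bB(x)$ (any term involving the $x$-derivative of the matrix prefactor is killed by the surviving $\sin(0)=0$), and the derivative of the $\bC$-term is a combination of $\sin$'s that vanishes at $y=x$; identity (\ref{ABC}) at $y=x$ then yields the normalization.

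For the ODE, apply $\partial_x^2-Q_0(x)-\xi^2$ to the ansatz and use $q_P^2+\xi^2=\omega^2/\sigma_0$ and $q_S^2+\xi^2=\omega^2/\mu_0$. Linear independence of $\sin((x-y)q_{P,S})/q_{P,S}$ and $\cos((x-y)q_{P,S})$ in $y$ splits the equation into four matrix identities:
\begin{align*}
\bA''(x)&=\bigl[Q_0(x)+\tfrac{\omega^2}{\sigma_0}\mathbf{I}_2\bigr]\bA(x),
 & Q_0(x)\bB(y)&=-\tfrac{\omega^2}{\mu_0}\bB(y),\\
Q_0(x)\bC&=-\tfrac{\omega^2}{\mu_0}\bC,
 & 2\bA'(x)&=-\tfrac{1}{\sigma_0}\bC-\tfrac{1}{\omega^2}Q_0(x)\bC.
\end{align*}
By the rank-one factorizations, the first identity reduces to $\bA''=0$ (valid because $a$ is affine) together with the spectral relation $Q_0(x)a(x)=-(\omega^2/\sigma_0)a(x)$; the middle two reduce to $Q_0(x)b=-(\omega^2/\mu_0)b$; and the fourth reduces to $a'(x)=-(c_0/2)b$ via the elementary arithmetic $1/\mu_0-1/\sigma_0=c_0/\mu_0$. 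Both spectral relations are verified component-wise using $\det G^H=1$ and the definition of $c_0$. The main obstacle throughout is this algebraic bookkeeping; once the rank-one structure of $\bA,\bB,\bC$ is identified and $\det G^H=1$ is invoked, every identity collapses to an elementary scalar check.
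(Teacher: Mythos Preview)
Your proof is correct. The paper states Lemma~\ref{l-ABC} without proof, treating it as a computational fact to be verified; your direct substitution via the rank-one factorizations $\bA(x)=a(x)(-G_{12}^H,G_{11}^H)$, $\bB(y)=b(\beta_2(y),-\beta_1(y))$, $\bC=\mu_0\,b\,(G_{12}^H,-G_{11}^H)$ is exactly the natural way to carry this out, reducing all four coefficient identities to the two spectral relations $Q_0(x)a(x)=-(\omega^2/\sigma_0)a(x)$ and $Q_0(x)b=-(\omega^2/\mu_0)b$, which in turn collapse to $\det G^H=1$ and the arithmetic of $c_0$.
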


\noindent
Upon factoring out the complex exponentials in the Jost solutions, we obtain the Faddeev solutions
\begin{align*}
&H_P^\pm= e^{\mp \ii x q_P} F_P^\pm,\qq
 H_{P,0}^\pm= e^{\mp \ii x q_P} F_{P,0}^\pm,
\\
&H_S^\pm= e^{\mp \ii x q_P} F_S^\pm,\qq
 H_{S,0}^\pm= e^{\mp \ii x q_P} F_{S,0}^\pm.
\end{align*}
We find that
\begin{equation}\lb{Faddeev0}
  H_{P,0}^+ = 
\left(\begin{array}{c}
G_{21}^H \pm \ii q_P\frac{\mu_0}{\omega^2}G_{11}^H 
\\ \\
G_{22}^H \pm \ii q_P\frac{\mu_0}{\omega^2}G_{12}^H\end{array}\right)\qq\text{and}\qq
 H_{S,0}^+ =-\frac{\mu_0 \xi}{\omega^2}
e^{\ii (q_S-q_P)x}\left(\begin{array}{c}
G_{11}^H 
\\ \\
G_{12}^H
\end{array}\right).
\end{equation}

With the Green's function,
\begin{equation}
   \widetilde{\mathbf G}(x,y;\xi)=e^{\ii (y-x) q_P(\xi)} \bG(x,y;\xi),
\end{equation}
we obtain the Volterra equation 
\begin{equation}
   \bH^+(x,\xi)=\bH_0^+(x,\xi)-\int_x^\infty \widetilde{\mathbf G}(x,y;\xi) V(y) \bH^+(y;\xi)dy
\lb{V_P}
\end{equation}
for $\bH^+:=[H_P^+\,\, H_S^+ ]$, replacing (\ref{Volterra}). Employing classical techniques (see, for example \cite[Section~2.4]{BerezinShubin1991}), the growth of $\widetilde{\mathbf G}(x,y;\xi)$ in $\xi \in \cR$ can be estimated:

\begin{lemma}\lb{l-kernel}
Let $\omega>0$ be fixed. Then, for $x < y < H$ (within the slab)
\begin{align*}&\|\widetilde{\bG}(x,y;\xi)\|<\frac{C}{\max\{|\xi|,1\}} e^{(H-x)_+\gamma(\xi)},\end{align*}
where the constant $C$ only depends on the coefficients of the matrices $\bA$, $\bB$ and $\bC$ and $\|.\|$ is max matrix norm; $\gamma$ was given in (\ref{eq:gammadef}).
\end{lemma}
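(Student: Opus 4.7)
The plan is to split $\widetilde{\bG}(x,y;\xi) = e^{\ii(y-x)q_P(\xi)}\bG(x,y;\xi)$ into its three summands according to Lemma~\ref{l-ABC} and to bound each one after incorporating the exponential prefactor. The entries of $\bA(x), \bB(y), \bC$ involve only the fixed matrix $G^H$ and at most linear polynomial factors in $x$ or $y$, hence they are uniformly bounded for $x, y \in [0, H]$; it therefore suffices to estimate three scalar multipliers. Writing the trigonometric functions in exponential form, I obtain the identities
\begin{align*}
e^{\ii(y-x)q_P}\frac{\sin((x-y)q_P)}{q_P} &= \frac{1 - e^{2\ii(y-x)q_P}}{2\ii q_P}, \\
e^{\ii(y-x)q_P}\frac{\sin((x-y)q_S)}{q_S} &= \frac{e^{\ii(y-x)(q_P - q_S)} - e^{\ii(y-x)(q_P + q_S)}}{2\ii q_S}, \\
e^{\ii(y-x)q_P}\bigl(\cos((x-y)q_S) - \cos((x-y)q_P)\bigr) &= \tfrac{1}{2}(e^{\ii(y-x)(q_P - q_S)} - 1)(1 - e^{\ii(y-x)(q_P + q_S)}).
\end{align*}
The factorization in the third line is crucial and is the only way to expose a small factor in the absence of any explicit $1/q$.

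Next, I would apply the elementary bound $|e^{\ii w} - 1| \leq \min(2, |w|)\, e^{[-\Im w]_+}$, where $[-\Im w]_+ = \tfrac{1}{2}(|\Im w| - \Im w)$. Since $x < y < H$ forces $y - x \leq (H-x)_+$, each of the exponents $(y-x)\cdot 2[-\Im q_P]_+$, $(y-x)[-\Im(q_P - q_S)]_+$, and $(y-x)[-\Im(q_P + q_S)]_+$ is bounded by $(H-x)_+\,\gamma(\xi)$ directly from the definition~(\ref{eq:gammadef}). For the first two summands this yields envelopes $C\,e^{(H-x)_+\gamma(\xi)}/|q_P|$ and $C\,e^{(H-x)_+\gamma(\xi)}/|q_S|$; since $|q_P|$ and $|q_S|$ are each comparable to $\max\{|\xi|, 1\}$ (away from the branch points), these two terms already have the stated form.

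The delicate step is the third summand, in which two envelope factors multiply and no $1/q$ is visible. I address both issues at once. First, using the sharper inequality $|e^{\ii w} - 1| \leq |w|\, e^{[-\Im w]_+}$ on the factor $e^{\ii(y-x)(q_P - q_S)} - 1$ extracts $(y - x)|q_P - q_S|$, and the identity $|q_P - q_S| = \omega^2|\sigma_0^{-1} - \mu_0^{-1}|/|q_P + q_S|$ turns this into a bound of order $1/\max\{|\xi|, 1\}$. Second, setting $a := \Im(q_P - q_S)$ and $b := \Im(q_P + q_S)$, a short case analysis on the signs of $a, b$ combined with the relation $a + b = 2 \Im q_P$ shows $[-a]_+ + [-b]_+ \leq \gamma(\xi)$ in every case, so the product of the two exponential envelopes still fits inside $e^{(H-x)_+\gamma(\xi)}$ rather than doubling the exponent.

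Collecting the three bounds and multiplying by the uniform bounds on $\bA, \bB, \bC$ yields the claimed inequality. The main obstacle is precisely this third summand, where one must simultaneously (i) extract the missing $1/|\xi|$ from the small difference $q_P - q_S$ via the factorization, and (ii) verify that the two exponential envelopes arising from that product do not compose into $e^{2(H-x)_+\gamma(\xi)}$. Both points hinge on the specific weighted form of~(\ref{eq:gammadef}) together with the arithmetic relation $\Im(q_P - q_S) + \Im(q_P + q_S) = 2\Im q_P$; indeed $\gamma$ is engineered so that exactly this case analysis closes.
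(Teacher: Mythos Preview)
Your argument is correct and supplies exactly the detail the paper itself omits: the paper does not prove Lemma~\ref{l-kernel} explicitly but only invokes ``classical techniques'' with a reference to \cite[Section~2.4]{BerezinShubin1991}. Your decomposition via Lemma~\ref{l-ABC}, the exponential identities, the use of $|e^{\ii w}-1|\le \min(2,|w|)\,e^{[-\Im w]_+}$, and especially the case analysis showing $[-a]_+ + [-b]_+ \le \gamma(\xi)$ via $a+b=2\Im q_P$ are all sound and are indeed how the estimate is obtained in practice; the factorization of the $\bC$-term and the extraction of $|q_P-q_S|=\omega^2|\sigma_0^{-1}-\mu_0^{-1}|/|q_P+q_S|$ are the right moves.

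One small point worth tightening in your write-up: the phrase ``$|q_P|$ and $|q_S|$ are each comparable to $\max\{|\xi|,1\}$ (away from the branch points)'' is not quite how the first two summands are handled uniformly. Near a branch point $q_P\to 0$ (resp.\ $q_S\to 0$) the comparison fails, but the bound survives because you can switch to the $|w|$-branch of $\min(2,|w|)$, whereupon the $1/q_P$ (resp.\ $1/q_S$) cancels and you are left with $(y-x)\le H$; since the branch points sit at bounded $|\xi|$, the factor $(H-x)_+$ alone already dominates $C/\max\{|\xi|,1\}$ there. You clearly have this in mind (you wrote $\min(2,|w|)$), but stating the two regimes explicitly would make the argument airtight.
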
 

Now, we can solve the Volterra equation (\ref{V_P}) by iteration and get a successive approximation of the solution. We follow essentially the procedure described in \cite[Section 2.4]{BerezinShubin1991} on the physical sheet and in \cite{Korotyaev2004, IantchenkoKorotyaev2014a, IantchenkoKorotyaev2014b} on the unphysical sheets. By estimating the terms using Lemma \ref{l-kernel} and changing the order of integrations in multiple integrals we gain a factorial factor in the denominator which makes the series converge uniformly on bounded sets excluding the branch points. On the unphysical sheets special care is needed to address the increasing exponential factors. This is where the boundedness of the zero order iteration, $\bH_0^+$, with columns given in (\ref{Faddeev0}) is used, together with Lemma \ref{l-kernel}.

Returning to the Jost solutions and  using the Riemann-Lebesgue lemma, we get

\begin{theorem}\label{th-Jost-solutions} Let  $\omega>0$ be fixed. Let $\xi\in\cR$ or its projection $\xi\in\Pi(\cR)\sim \cK$ and $\gamma$ be defined as in (\ref{eq:gammadef}).
For any fixed $x \geq 0$, the Jost solution, $\bF^+(x,\xi)$, is analytic on $\cR$ and $\cK$, of exponential type, satisfying
\begin{equation}
\bF^+(x,\xi) = \bF_0^+(x,\xi) - \int_x^H \bG(x,y) V(y)\bF_0^+(y,\xi)dy+\sum_{k=2}^\infty \bF_k(x,\xi),
\end{equation}
where
\begin{equation}
\bF_k(x,\xi)=\frac{|\xi|}{k!}\mathcal{O}\left(\frac{1}{\max\{|\xi|,1\}} \right)^ke^{(H-x)_+\gamma(\xi)}e^{-\Im q_S(\xi) x}.\end{equation}
In the limit  $|\xi| \rightarrow \infty$, $\xi \in \cK_{++}$, 
\begin{align}
\bF^+(x,\xi)=&-e^{-x\xi} \xi \frac{\mu_0}{\omega^2}\ma G_{11}^H&G_{11}^H\\ \\
G_{12}^H&G_{12}^H\am
\lb{xilimitJost}\\
&\left.+
e^{-x\xi} \frac{\mu_0}{\omega^2} \left\{\ma \frac12 G_{11}^H\left(c_0 H-x\right)+G_{21}^H& -\frac12x G_{11}^H\\ \\ \frac12 G_{12}^H\left(c_0 H-x\right)+G_{22}^H&  -\frac12xG_{12}^H  \am-\frac{1}{2}\frac{\mu_0}{\omega^2}\int_x^HV(y) \ma G_{11}^H &G_{11}^H\\ \\
G_{12}^H &G_{12}^H\am dy\right\}\right.\nonumber\\[0.25cm]
&+ o\left(1\right)
e^{-x \xi}
\nonumber
\end{align}
and
\begin{align}(\bF^+)'(x,\xi)=&e^{-x\xi}\xi^2\frac{\mu_0}{\omega^2}\ma G_{11}^H&G_{11}^H\\ \\
G_{12}^H&G_{12}^H\am\lb{xilimitDJostmoredoublebis}\\&-e^{-x\xi}\xi\left\{\ma \frac12 G_{11}^H\left(c_0 H-x\right)+G_{21}^H& -\frac12x G_{11}^H\\ \\ \frac12 G_{12}^H\left(c_0 H-x\right)+G_{22}^H&  -\frac12xG_{12}^H  \am-\frac{1}{2}\frac{\mu_0}{\omega^2}\int_x^H V(y)  \ma G_{11}^H &G_{11}^H\\ \\
G_{12}^H &G_{12}^H\am dy\right\}\nonumber\\[0.25cm]
&+ o\left(|\xi| \right)e^{-x\xi}.\nonumber\end{align}
\end{theorem}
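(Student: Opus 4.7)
The plan is to prove all three claims (analyticity of $\bF^+$ on $\cR$ and $\cK$, the Neumann-series representation with the stated bound on the $\bF_k$, and the two large-$\xi$ asymptotics on $\cK_{++}$) by iterating the Volterra equation (\ref{V_P}) for the Faddeev-type matrix $\bH^+=[H_P^+\;H_S^+]$ and then restoring the plane-wave factors to return to $\bF^+$. Concretely, I would set $\bH^+=\sum_{k\ge 0}\bH_k$, with $\bH_0=\bH_0^+$ as given in (\ref{Faddeev0}) and
\[
\bH_{k+1}(x,\xi)=-\int_x^H\widetilde{\bG}(x,y;\xi)\,V(y)\,\bH_k(y,\xi)\,dy,
\]
the upper limit being $H$ because $V$ is supported in $[0,H]$. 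Applying Lemma \ref{l-kernel} term-by-term and re-ordering the $k$-fold iterated integral onto the simplex $x<y_1<\cdots<y_k<H$, the standard argument in \cite[Sec.~2.4]{BerezinShubin1991} (on the physical sheet) and in \cite{Korotyaev2004,IantchenkoKorotyaev2014a,IantchenkoKorotyaev2014b} (on the unphysical ones) yields a bound of the form
\[
\|\bH_k(x,\xi)\|\le\frac{1}{k!}\left(\frac{C\,\|V\|_{L^1(x,H)}}{\max\{|\xi|,1\}}\right)^{\!k}e^{(H-x)_+\gamma(\xi)}\,\sup_{y\in[x,H]}\|\bH_0^+(y,\xi)\|.
\]
Since every $\bH_k$ is analytic in $\xi\in\cR$ (the $\xi$-dependence of $\widetilde{\bG}$ is through the entire functions $\cos(\cdot q_{P,S})$ and $\sin(\cdot q_{P,S})/q_{P,S}$ from Lemma \ref{l-ABC}, while $\bH_0^+$ is analytic on $\cR$ away from the branch points), the series converges uniformly on compacta and defines an analytic extension of $\bH^+$, hence of $\bF^+$, with exponential type controlled by $\gamma(\xi)+|\Im q_S(\xi)|x$. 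Reinstating the factor $e^{-\Im q_S(\xi)x}$ translates the above bound into the announced estimate on $\bF_k$.

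For the large-$\xi$ asymptotics on $\cK_{++}$ I would use (\ref{limitS}) to write $q_P(\xi),q_S(\xi)=\ii\xi+O(|\xi|^{-1})$, which gives $\gamma(\xi)=0$ and $e^{\ii x q_{P,S}(\xi)}=e^{-x\xi}\bigl(1+O(|\xi|^{-1})\bigr)$. Substituting into (\ref{Faddeev0}) and tracking the $O(\xi)$ and $O(1)$ contributions yields the first two matrix terms in (\ref{xilimitJost}). The first-order correction $-\int_x^H\bG(x,y;\xi)V(y)\bF_0^+(y,\xi)\,dy$ is then expanded using the explicit form of $\bG$ from Lemma \ref{l-ABC}: the pieces $\cos((x-y)q_P)$ and $\sin((x-y)q_P)/q_P$ combined with the $e^{\ii y q_P}$ factor of $\bF_0^+$ split into a nonoscillating contribution that produces exactly the $\int_x^H V(y)(\cdots)\,dy$ matrix in (\ref{xilimitJost}), and oscillating contributions of the form $\int_x^H e^{\ii(q_S\pm q_P)y}(\cdots)\,dy$. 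The genuinely oscillatory one (involving $q_S-q_P$) is $o(1)$ by the Riemann--Lebesgue lemma applied to the $C^3$ density coming from the Lam\'e parameters, and the other is exponentially small since $\Im(q_S+q_P)\sim 2|\xi|$. The iterates $k\ge2$ are $O(|\xi|^{-1})$ smaller than the leading term by the factorial bound, hence absorbed into $o(1)e^{-x\xi}$. Formula (\ref{xilimitDJostmoredoublebis}) then follows by differentiating the same expansion in $x$; the $\xi^2$ prefactor arises when $\partial_x e^{\ii x q_P}\sim -\xi e^{-x\xi}$ hits the $O(\xi)$ term of $\bF_0^+$.

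I expect the main obstacle to be the analysis on the unphysical sheets $\cR_{+-},\cR_{-+},\cR_{--}$, where $\gamma(\xi)\sim 2|\xi|$ and $\bH_0^+$ itself is no longer bounded in $\xi$. The delicate point is that the exponential prefactor $e^{(H-x)_+\gamma(\xi)}$ appearing in the single-step estimate of Lemma \ref{l-kernel} must not compound upon iteration, as a naive composition would suggest $e^{k(H-x)\gamma(\xi)}$ and defeat convergence. Verifying that only one such exponential survives -- essentially because $\gamma$ depends on $\xi$ alone and the inner integrals $\int_{y_{j-1}}^{y_j}\,dy_j$ supply only polynomial-in-$(H-x)$ growth that is compensated by $\max\{|\xi|,1\}^{-k}/k!$ -- is where the argument is most delicate. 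The templates of Korotyaev and Iantchenko--Korotyaev cover the scalar Schr\"odinger case, but the presence of two quasimomenta $q_P,q_S$ doubles the number of exponential regimes and requires a careful case split based on the signs of $\Im q_P,\Im q_S$ encoded by $\gamma$.
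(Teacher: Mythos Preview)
Your proposal is correct and follows essentially the same route as the paper: iterate (\ref{V_P}) for $\bH^+$, control each iterate via Lemma~\ref{l-kernel} and the simplex reordering as in \cite[Sec.~2.4]{BerezinShubin1991} and \cite{Korotyaev2004,IantchenkoKorotyaev2014a,IantchenkoKorotyaev2014b}, then extract the large-$\xi$ asymptotics on $\cK_{++}$ via the Riemann--Lebesgue lemma. On the obstacle you flag, the reason a single factor $e^{(H-x)_+\gamma(\xi)}$ survives is that the sharp bound on $\widetilde{\bG}(x,y;\xi)$ implicit in Lemma~\ref{l-ABC} is actually $Ce^{(y-x)\gamma(\xi)}/\max\{|\xi|,1\}$ (the oscillatory factors depend on $y-x$), and these telescope over the ordered simplex $x<y_1<\cdots<y_k<H$ to a single $e^{(y_k-x)\gamma(\xi)}\le e^{(H-x)\gamma(\xi)}$; the paper summarizes this step as combining the boundedness of $\bH_0^+$ with Lemma~\ref{l-kernel}.
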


By lengthy calculations, we get the following asymptotic estimate for the determinant of the relevant Jost solution at $x=0$. We write
$$\ma v_1 & v_2\\ \\ v_3 & v_4\am=\frac{1}{2}\int_0^HV(y)dy\frac{\mu_0 }{\omega^2}\ma G_{11}^H & G_{11}^H
\\ \\
G_{12}^H&G_{12}^H\am,\qq \mbox{satisfying}\,\,v_1-v_2=v_3-v_4=0,$$
and
\begin{align*}\ma w_1 & w_2\\ \\ w_3 & w_4\am=\frac{1}{2\xi}\int_0^HV(y) \ma -\frac{c_0}{2} G_{11}^H\left(y-H\right)+G_{21}^H& -\frac12y c_0 G_{11}^H\\ \\ -\frac{c_0}{2} G_{12}^H\left(y-H\right)+G_{22}^H&  -\frac12y c_0G_{12}^H   \am dy.
\end{align*}
Then
\begin{align*}\det\bF^+(0,\xi)=\xi\frac{\mu_0}{\omega^2} &+\frac{\mu_0 }{\omega^2}G_{11}^H\left(w_3-w_4\right)+ \frac{\mu_0 }{\omega^2}G_{12}^H\left(w_2-w_1\right)
\\
-&\left( \frac12 G_{11}^Hc_0 H+G_{21}^H\right)v_4+ \left( \frac12 G_{12}^Hc_0 H+G_{22}^H\right)v_2+o(1).\end{align*}

\subsection{Analytic properties of the Jost function}

We define the Weyl matrix, $\bM$ as \cite{MdHoopIantchenko2022} 
\begin{equation*}
   \bM(\xi) = \bF^+(0,\xi) [\bF_\Theta (\xi)]^{-1} .
\end{equation*}
Thus
$$\det\bF_\Theta(\xi)=\det\bF^+(0,\xi)\det \bM^{-1}(\xi),\qq \xi\in\cK_{+,+}.$$
Expanding the determinant of (108) in previous work  \cite[Lemma~VI.1]{MdHoopIantchenko2022}, we find that
\begin{align*}
  \det \bM^{-1}(\xi)=&\xi^2\left(1- 2\varpi\theta_2\right)+\xi \left(\theta_3+\varpi Q_{12}\right) +{\mathcal O}\left(1\right),\\ &\varpi=\frac{\mu_0}{\mu(0)},\qq 1 - 2 \varpi
  \theta_2 = c(0) = \frac{\lambda(0) + \mu(0)}{\lambda(0) + 2
    \mu(0)} ,\qq \theta_3=\frac{\dot{\mu}(0)}{\mu(0)},
\end{align*}  
where we note the appearance of a potential matrix element. We get the following asymptotic expansions for the determinant of the Jost function and the Rayleigh determinant in

\begin{lemma}[Physical sheet $\cK_{+,+}$] Let  $\omega>0$ be fixed. We have, for $\Im q_P\geq 0,$ $\Im q_S\geq 0,$ as $|\xi|\rightarrow\infty$ (so also as $\xi\in \ii \R,$ $|\xi|\rightarrow\infty$),
\begin{multline*}
\det\bF_\Theta(\xi)=\xi^3 c(0)\frac{\mu_0}{\omega^2}+\xi^2\left(\frac{\mu_0}{\omega^2}\left(\theta_3+\varpi Q_{12}\right)\right.\\
\left.+c(0)\left(\frac{\mu_0 }{\omega^2}G_{11}^H\left(w_3-w_4\right)+ \frac{\mu_0 }{\omega^2}G_{12}^H\left(w_2-w_1\right)- \left(\frac12 G_{11}^H c_0 H + G_{21}^H\right) v_4+ \left( \frac12 G_{12}^H c_0 H + G_{22}^H\right) v_2 \right)\right)
\\
+ o(\xi^2)
\end{multline*}
and
\begin{align*}
   \Delta = \det\mB = \ii \xi^2 A - \ii \xi B + o(\xi),
\end{align*}
where
\begin{equation}
   A = -2\omega^2 \mu(0) c(0)
\end{equation}
and
\begin{multline}
   B=\frac{\omega^42\mu(0)}{ \mu_0}\left(\frac{\mu_0}{\omega^2}\left(\theta_3+\varpi Q_{12}\right)\right.
\\
   \left.+ c(0)\left(\frac{\mu_0 }{\omega^2}G_{11}^H\left(w_3-w_4\right)+ \frac{\mu_0 }{\omega^2}G_{12}^H\left(w_2-w_1\right)-\left(\frac12 G_{11}^Hc_0 H+G_{21}^H\right)v_4 + \left( \frac12 G_{12}^Hc_0 H+G_{22}^H\right)v_2\right)\right).
\end{multline}
\end{lemma}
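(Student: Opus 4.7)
The plan is to combine three ingredients already in hand: the factorization $\det\bF_\Theta(\xi) = \det\bF^+(0,\xi) \cdot \det\bM^{-1}(\xi)$, the asymptotic expansion of $\det\bF^+(0,\xi)$ derived at the end of the previous subsection, the expansion of $\det\bM^{-1}(\xi)$ quoted from the earlier paper \cite{MdHoopIantchenko2022}, and finally the algebraic identity (\ref{BFTheta}) tying $\bF_\Theta$ to the boundary matrix $\mB$. The entire argument is essentially a controlled multiplication of asymptotic expansions.

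First, I would write
\[
  \det\bF^+(0,\xi) = \xi\,\frac{\mu_0}{\omega^2} + K + o(1),\qquad |\xi|\to\infty,\ \xi\in\cK_{+,+},
\]
where $K$ denotes the $\xi$-independent constant exhibited just before this lemma (the combination involving $v_2,v_4,w_1,\dots,w_4$ and the entries $G_{ij}^H$). This expansion is legitimate on $\cK_{+,+}$ because Theorem~\ref{th-Jost-solutions} gives the required control of the remainder (the $o(1)$ is uniform after factoring out $e^{-x\xi}$ and setting $x=0$). Then, inserting also the cited expansion
\[
  \det\bM^{-1}(\xi) = c(0)\,\xi^2 + (\theta_3 + \varpi Q_{12})\,\xi + \mathcal{O}(1),
\]
and multiplying, the cubic term yields $c(0)\tfrac{\mu_0}{\omega^2}\xi^3$, while the $\xi^2$-coefficient collects both the ``$\xi\cdot\xi$'' contribution $\tfrac{\mu_0}{\omega^2}(\theta_3 + \varpi Q_{12})$ and the ``$K\cdot\xi^2$'' contribution $c(0)K$. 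The remainder is $o(\xi^2)$, provided one checks that the $\mathcal{O}(1)$ in $\det\bM^{-1}$ multiplied by the leading $\xi$ of $\det\bF^+(0,\xi)$ is indeed absorbed into $o(\xi^2)$ — this is automatic as long as we are content with an $o(\xi^2)$ (rather than $\mathcal{O}(\xi)$) remainder, which is what the statement claims.

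Next, I would take determinants in (\ref{BFTheta}). The two flanking $2\times 2$ factors are triangular and easy to evaluate:
\[
  \det\ma -\mu(0) & 0 \\ -2\mu_0\tfrac{\mu'(0)}{\mu(0)} & 2\mu_0\xi \am = -2\mu(0)\mu_0\,\xi,
  \qquad
  \det\ma \ii & 0 \\ 0 & -1 \am = -\ii.
\]
Together with the scalar prefactor $1/(2\mu(0)\omega^2)^2$ this yields
\[
  \det\bF_\Theta(\xi) = \frac{\ii\,\mu_0\,\xi}{2\mu(0)\,\omega^4}\,\Delta(\xi),
  \qquad
  \Delta(\xi) = -\frac{2\ii\,\mu(0)\,\omega^4}{\mu_0\,\xi}\,\det\bF_\Theta(\xi).
\]
Substituting the expansion of $\det\bF_\Theta$ into the right-hand side and dividing by $\xi$ produces
\[
  \Delta(\xi) = -2\ii\,\mu(0)\,\omega^2\,c(0)\,\xi^2
   - \frac{2\ii\,\mu(0)\,\omega^4}{\mu_0}\!\left[\tfrac{\mu_0}{\omega^2}(\theta_3+\varpi Q_{12}) + c(0)K\right]\xi + o(\xi),
\]
which matches $\Delta = \ii\xi^2 A - \ii\xi B + o(\xi)$ with the stated $A$ and $B$. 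I would then verify the sign and constant conventions match line-by-line.

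The main bookkeeping obstacle is simply the identification of $K$ inside the $\xi^2$-coefficient of $\det\bF_\Theta$ and the matching of $B$; these expressions are long but completely mechanical. The only genuine analytical input is that the $o$-terms in the expansions of $\det\bF^+(0,\xi)$ and $\det\bM^{-1}(\xi)$ are truly controlled on $\cK_{+,+}$ (where $\gamma(\xi)=0$), so that their product remainder is $o(\xi^2)$. This is furnished by Theorem~\ref{th-Jost-solutions} together with the cited expansion from \cite{MdHoopIantchenko2022}, so no further estimates are required beyond verifying that the error terms close up correctly after division by $\xi$ in the final step.
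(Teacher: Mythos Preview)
Your proposal is correct and follows precisely the approach the paper itself sets up: the lemma in the paper is stated without its own proof, immediately after the paper records the factorization $\det\bF_\Theta=\det\bF^+(0,\xi)\det\bM^{-1}(\xi)$, the expansion of $\det\bF^+(0,\xi)$, and the expansion of $\det\bM^{-1}(\xi)$, so the intended argument is exactly the multiplication you carry out, followed by passing to $\Delta$ via the determinant of (\ref{BFTheta}). Your care in tracking why the remainder is $o(\xi^2)$ (the $\mathcal{O}(1)\cdot\xi$ cross term) and in checking the scalar factor $\det\bF_\Theta=\tfrac{\ii\mu_0\xi}{2\mu(0)\omega^4}\Delta$ is appropriate and matches the constants in the statement.
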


Next, we consider the unphysical sheets. We note that the Jost solution $\bF(x,\xi)$ is bounded on the physical sheet $ \cK_{++}$ and unbounded on the unphysical sheets. We denote with subindex $\sigma_1,\sigma_2$ the continuation of Jost solution and function to the Riemann sheets $\cK_{\sigma_1,\sigma_2}$, $\sigma_1,\sigma_2 \in \{\pm 1\}$. We have \[\lb{identification1}\bF_{\Theta,+-}(\xi)=\bF_\Theta(w_S(\xi)),\qq \bF_{\Theta,-+}(\xi)=\bF_\Theta(w_P(\xi)), \qq\bF_{\Theta,--}(\xi)=\bF_\Theta(w_{PS}(\xi)).\]
We aim to obtain the asymptotic expansion of $F(\xi) := \Delta(\xi) \Delta(w_P(\xi)) \Delta(w_S(\xi)) \Delta(w_{PS}(\xi))$ as $|\xi| \rightarrow \infty$, $\xi \in \ii\R$.
Below, we use that \[\lb{identification2}\bF_\Theta(w_P(\xi))|_{\cK_{++}}=\bF_\Theta(\xi)|_{\cK_{-+}},\qq \bF_\Theta(w_S(\xi))|_{\cK_{++}}=\bF_\Theta(\xi)|_{\cK_{+-}},\qq \bF_\Theta(w_{PS}(\xi))|_{\cK_{++}}=\bF_\Theta(\xi)|_{\cK_{--}}\]
and that these functions are bounded on their respective Riemann sheets. By using the asymptotic expansions of $\bF(0,\xi)$, $\bF'(0,\xi)$ on the unphysical sheets, the definition \begin{equation} \label{FThetaF}
   \bF_\Theta(\xi) = \bF'(0,\xi) + \Theta(\xi) \bF(0,\xi),\qq \bF(x,\xi)=\bF^+(x,\xi)
\end{equation}
(here we may not use the expansion for the inverse of the Weyl function, $\bM^{-1}$),
by tedious calculations we get the following asymptotic expansions 

\begin{lemma}[Riemann surface]
Let
\begin{align}
C = 8\mu_0 (G_{11}^H)^2,\qq D=8\omega^2\mu_0G_{11}^H \left(\frac12 G_{11}^Hc_0 H+G_{21}^H\right).
\end{align}
Let  $\omega>0$ fixed. Then, for $\xi \in \cK_{++}$ as $|\xi|\rightarrow\infty$,
\begin{eqnarray}
   \Delta(\xi)&=&\ii \xi^2 A + \ii \xi B + o(\xi),
\lb{Exp-1}\\
   \Delta(w_{PS}(\xi))&=&\ii \xi^2 A - \ii \xi B + o(\xi),
\\
   \Delta(w_P(\xi))&=& \ii\xi^4 C + \ii \xi^3 D + o(\xi^3),
\\
   \Delta(w_S(\xi))&=&\ii \xi^4 C - \ii \xi^3 D + o(\xi^3).
\lb{Exp-0}
\end{eqnarray}
\end{lemma}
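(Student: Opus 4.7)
The approach is via the decomposition $\Delta = d_1 + q_P d_2 + q_S d_3 + q_P q_S d_4$ from (\ref{Rd}), in which $d_1, d_2, d_3, d_4$ are entire in $\xi$ (Remark~\ref{r-entirecond}), so that the sheet-dependence of $\Delta$ is carried entirely by $q_P, q_S$. Since $w_P, w_S, w_{PS}$ simply flip the signs of $q_P$ and/or $q_S$ while leaving the $d_i$ untouched, one has
\begin{align*}
   \Delta(w_P(\xi))    &= d_1 - q_P d_2 + q_S d_3 - q_P q_S d_4, \\
   \Delta(w_S(\xi))    &= d_1 + q_P d_2 - q_S d_3 - q_P q_S d_4, \\
   \Delta(w_{PS}(\xi)) &= d_1 - q_P d_2 - q_S d_3 + q_P q_S d_4.
\end{align*}
This reduces the four expansions to computing the top two orders of $d_1, \ldots, d_4$ and combining with the physical-sheet asymptotics $q_P, q_S \sim \ii \xi$ and $q_P q_S = -\xi^2 + \tfrac{\omega^2}{2}(1/\sigma_0 + 1/\mu_0) + O(\xi^{-2})$.

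On $\cK_{++}$ the expansion $\Delta(\xi) = \ii \xi^2 A + \ii \xi B + o(\xi)$ follows from the previous (physical sheet) lemma via the link (\ref{BFTheta}) between $\bF_\Theta$ and $\mB$. Mechanistically, the leading $\xi^4$ contribution from $d_1$ cancels the one from $q_P q_S d_4$, leaving the degree-$2$ coefficient $\ii A$ with $A = -2\omega^2 \mu(0) c(0)$; the $\ii \xi B$ term then combines $\ii \xi (d_2 + d_3)$ with the subleading pieces of $d_1$ and $q_P q_S d_4$. For $\Delta(w_{PS}(\xi))$ both $q_P$ and $q_S$ flip, so $q_P q_S$ is unchanged, the same $\xi^4$ cancellation persists, and only the combination $q_P d_2 + q_S d_3$ switches sign; this produces $\Delta(w_{PS}(\xi)) = \ii \xi^2 A - \ii \xi B + o(\xi)$.

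For $\Delta(w_P(\xi))$ and $\Delta(w_S(\xi))$ exactly one of $q_P, q_S$ flips, so the product $q_P q_S$ itself changes sign and the former cancellation between $d_1$ and $q_P q_S d_4$ becomes an addition. The degree jumps to $4$ with leading coefficient $\ii C$ coming from the doubled $\xi^4$ contributions of $d_1$ and $-q_P q_S d_4$; the $\xi^3$ coefficient $\pm \ii D$ arises by crossing the subleading $\xi^2$-correction of $q_P q_S d_4$ with the leading $\ii\xi$ behaviour of $\mp q_P d_2$ and $\pm q_S d_3$, and the sign flips between $w_P$ and $w_S$ because $d_2$ and $d_3$ enter with opposite signs in the two formulas.

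The principal technical obstacle is extracting the top two orders of $d_1, \ldots, d_4$. These are obtained from the values of $\psi_P^\pm, \psi_S^\pm$ at $Z = 0$, which come from $\bF^\mp$ through the Markushevich transform (\ref{MarkTr}) combined with the expansions (\ref{xilimitJost})-(\ref{xilimitDJostmoredoublebis}) of Theorem~\ref{th-Jost-solutions}; forming $\vartheta_{P,S}$ and $\varphi_{P,S}$ via (\ref{eq:JtoEnt}) and then the boundary tractions $\gamma_1, \ldots, \gamma_8$ via (\ref{eq:defgam13})-(\ref{eq:defgam68}) yields $d_1, \ldots, d_4$ as $2 \times 2$ determinants. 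The rank-one structure of the leading-order Jost matrix $-\xi (\mu_0/\omega^2) \ma G_{11}^H & G_{11}^H \\ G_{12}^H & G_{12}^H \am$ means that many of these determinants pick up their true leading behaviour only from interactions between leading and next-to-leading terms, and carrying this bookkeeping through is what ultimately produces the explicit constants $C = 8 \mu_0 (G_{11}^H)^2$ and $D = 8 \omega^2 \mu_0 G_{11}^H (\tfrac{1}{2} G_{11}^H c_0 H + G_{21}^H)$.
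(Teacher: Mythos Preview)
Your approach is correct but organized differently from the paper. The paper proceeds sheet-by-sheet: it computes $\bF_\Theta(\xi)=\bF'(0,\xi)+\Theta(\xi)\bF(0,\xi)$ directly on each of the four sheets (using that $\bF_{\Theta}$ restricted to $\cK_{\sigma_1\sigma_2}$ coincides with $\bF_\Theta\circ w_\bullet$ on $\cK_{++}$, cf.~(\ref{identification1})--(\ref{identification2})), and then reads off $\Delta$ via (\ref{BFTheta}). You instead factor through the entire decomposition (\ref{Rd}), compute the leading orders of $d_1,\dots,d_4$ once, and then recover the four expansions by sign-flipping $q_P,q_S$. The underlying asymptotic input is the same in both routes: obtaining $d_1,\dots,d_4$ requires the boundary tractions of \emph{both} $\psi^+$ and $\psi^-$, hence both $\bF^+$ and $\bF^-$ at $x=0$, which (through the identifications above) is exactly the data the paper uses when it expands $\bF^+$ on the unphysical sheets. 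What your organization buys is that the $\pm B$, $\pm D$ sign pattern and the jump in degree from $2$ to $4$ become structurally transparent (cancellation versus reinforcement of the $\xi^4$ pieces of $d_1$ and $q_Pq_Sd_4$), whereas in the paper these emerge only after four separate ``tedious calculations.'' One point worth making explicit in your write-up: Theorem~\ref{th-Jost-solutions} is stated for $\bF^+$, so you should say how you obtain the corresponding expansion for $\bF^-$ on $\cK_{++}$ --- either by an analogous Volterra argument or, more economically, by invoking the sheet identifications just mentioned.
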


Supposing better regularity of Lam{\'e} parameters, $\mu,\lambda\in C^N$ for $N$ large enough, we can prove existence of expansions
\begin{eqnarray}
   \Delta(\xi)&=&\sum_{j=0}^M c_{1,j} \xi^{2-j} + {\mathcal O}(\xi^{1-M}),
\\
   \Delta(w_{PS}(\xi))&=&\sum_{j=0}^M c_{2,j} \xi^{2-j} + {\mathcal O}(\xi^{1-M}),
\lb{Exp1}\\
   \Delta(w_P(\xi))&=&\sum_{j=0}^M c_{3,j} \xi^{4-j} + {\mathcal O}(\xi^{3-M}),
\\
   \Delta(w_S(\xi))&=&\sum_{j=0}^M c_{4,j} \xi^{4-j} + {\mathcal O}(\xi^{3-M}),
\lb{Exp2}
\end{eqnarray}
where $M$ depends on $N$.

\subsection{Cartwright class of entire functions}\label{ss-C}

In this subsection, we summarize some well-known facts from the theory of
entire functions. We follow \cite{IantchenkoKorotyaev2014a, IantchenkoKorotyaev2014b} and originally \cite{Koosis1988} and
skip the proofs given there. We write $z = x + \ii y$. An entire function $f = f(z) $ is in Cartwright class ${Cart}_{\rho_+,\rho_-},$   if $f$ is of exponential type, that is, there exists a constant $A > 0$ such that
\begin{equation}\label{first}
|f(z)| \leq {\rm const}\ e^{A|z|} 
\end{equation} 
for $z \in \C$,
\begin{equation}\label{second} \rho_{\pm}(f) \equiv \lim \sup_{y\to \infty} \frac{\ln|f(\pm \ii y)|}{y}
\end{equation}
($\rho_{\pm}$ is called exact ``type'') and
\begin{equation}\label{third}
\int_{\mathbb{R}} \frac{\log(1+ |f(x)| )dx}{1 + x^2} < \infty.
\end{equation}
Assume now that $f$ belongs to a Cartwright class and denote by
$(z_n)_{n=1}^\infty$ the sequence of its zeros $\neq 0$ (counted
with multiplicity), so arranged that $0<|z_1|\leq|z_2|\leq\ldots.$
Then we have the Hadamard factorization
\begin{equation}\label{Hadamard}f(z)= z^{\ell} f_0\lim_{R\rightarrow\infty}\prod_{|z_n|\leq R}\left(1-\frac{z}{z_{n}}\right),\qq f_0 =\frac{f^{(\ell)}(0)}{\ell !}\end{equation}
 for some integer $m,$ where
the product converges uniformly in every bounded disc and
\begin{equation}
\label{sumcond}
\sum \frac{|\Im z_n |}{|z_n|^2} <\infty.
\end{equation}
We will only need the following sub-class of ${ Cart}_{\rho_+,\rho_-},$ satisfying $ \rho_+(f) =\rho_-(f)=:\rho ,$ which we will denote by
${ Cart}_\rho$. 

We denote the
number of zeros of a function $f$ having modulus  $\leq r$ by $\cN
(r,f)$, each zero being counted according to its multiplicity. We denote by $\cN_+
(r,f)$ (or $\cN_-
(r,f)$) the
number of zeros of function $f$  counted in $\cN (r,f)$ with non-negative (negative) imaginary part  having modulus  $\leq r$, each zero being counted according to its multiplicity. We need the following
 well known result (see  \cite{Koosis1988}, p. 69).

\begin{theorem}[Levinson]\label{th-Levinson} Let the function $f$ belong to the Cartwright class ${ Cart}_\rho$  for some $\rho >0.$
Then
$$
  \cN_+(r,f)= \cN_-(r,f) =\frac{\rho\, r}{ \pi }(1+o(1)),\ \ \ r\to \iy .
$$
For each $\delta >0$ the number of zeros of $f$ with modulus $\leq r$
lying outside both of the two sectors $|\arg z | , |\arg z -\pi |<\delta$
is $o(r)$ for large $r$.
\end{theorem}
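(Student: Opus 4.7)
The plan is to deduce the zero-counting asymptotics from Jensen's formula combined with the Poisson--Jensen representation for Cartwright class functions, and to get the angular concentration directly from the convergence of the series $\sum |\Im z_n|/|z_n|^2$ given by (\ref{sumcond}).

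First, I would apply Jensen's formula on a disk of radius $r$ to convert zero-counting into a mean-growth statement:
$$\int_0^r \frac{\cN(t,f)}{t}\rmd t=\frac{1}{2\pi}\int_0^{2\pi}\log|f(re^{\ii\theta})|\rmd\theta-\log|f_0|+O(\log r),$$
the $O(\log r)$ absorbing the multiplicity at the origin. On each half-plane, a function in $Cart_\rho$ admits a representation
$$\log|f(z)|=\frac{|\Im z|}{\pi}\int_{\R}\frac{\log|f(t)|}{|t-z|^2}\rmd t+\rho|\Im z|+\sum_{\pm\Im z_n>0}\log\left|\frac{z-z_n}{z-\overline{z_n}}\right|,\qq \pm\Im z>0,$$
with the sum taken over zeros in the same half-plane as $z$. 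The integrability (\ref{third}) forces the Poisson integral to contribute $o(r)$ when averaged over a semicircle of radius $r$, while (\ref{sumcond}) provides an $o(r)$ control for the Blaschke sum in the same sense. The dominant term $\rho|\Im z|$, averaged over $\theta\in[0,\pi]$ or $[\pi,2\pi]$ on $|z|=r$, yields $\tfrac{2\rho r}{\pi}$.

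Combining these ingredients with the semicircle-by-semicircle version of Jensen's formula yields the integrated statement
$$\int_0^r \frac{\cN_\pm(t,f)}{t}\rmd t=\frac{\rho\, r}{\pi}(1+o(1)),\qq r\to\iy.$$
A standard Tauberian argument, exploiting the monotonicity of $\cN_\pm$ and the fact (established below) that zeros accumulate near the real axis, upgrades this to the pointwise asymptotic $\cN_\pm(r,f)=\tfrac{\rho r}{\pi}(1+o(1))$; this is where I expect the main technical obstacle, since one has to rule out pathological oscillations in $\cN_\pm$ between consecutive zeros. The classical way around this is to invoke the Ahlfors--Heins representation for functions of completely regular growth, which is guaranteed for Cartwright class functions.

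For the angular concentration statement, fix $\delta>0$ and restrict attention to zeros $z_n=r_ne^{\ii\theta_n}$ with $\min(|\theta_n|,|\theta_n-\pi|)\ge\delta$; for such a zero, $|\Im z_n|/|z_n|^2\ge \sin\delta/r_n$. If the number $\cN^{\delta}(r)$ of such zeros with $r_n\le r$ were not $o(r)$, a dyadic packing argument (decomposing into annuli $2^k\le r_n<2^{k+1}$ and observing that each annulus carries $\gtrsim 2^k$ such zeros for infinitely many $k$) would yield a subseries of $\sum |\Im z_n|/|z_n|^2$ bounded below by $\sin\delta \cdot \sum r_n^{-1}$, which diverges and contradicts (\ref{sumcond}). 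Hence $\cN^{\delta}(r)=o(r)$, completing Levinson's angular concentration statement.
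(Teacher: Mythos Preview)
The paper does not prove this theorem; it is quoted as a classical result with a reference to Koosis's monograph \cite{Koosis1988}, p.~69, and no argument is given. Your sketch is in the spirit of the standard proof found there (and in Levin's book): Nevanlinna/Poisson--Jensen representation in each half-plane, the linear term $\rho|\Im z|$ furnishing the leading contribution, and the Blaschke condition (\ref{sumcond}) controlling the angular distribution.

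Two remarks on the sketch itself. First, what you call the ``semicircle-by-semicircle version of Jensen's formula'' is really Carleman's formula for a half-disk; that, or the indicator-function route via the Ahlfors--Heins theorem, is the precise tool that converts the half-plane representation into the integrated counting asymptotics for $\cN_\pm$. The Tauberian passage from the integrated to the pointwise asymptotic is, as you say, where one must appeal to completely regular growth, and this is not a triviality. Second, your angular-concentration argument is correct in substance but the dyadic phrasing is unnecessary: from $\sum_{\text{bad}} r_n^{-1}<\infty$ one gets $\int_1^\infty \cN^\delta(t)\,t^{-2}\,\rmd t<\infty$ by partial summation, and then monotonicity of $\cN^\delta$ gives $\cN^\delta(r)=o(r)$ directly (if $\cN^\delta(r_k)\ge c\,r_k$ along a sparse sequence, the integral over $[r_k,2r_k]$ is bounded below by $c/2$, a contradiction).
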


\begin{lemma} \label{L3.2}
Let $f \in { Cart}_\rho$, $\rho > 0$. Assume that for some $p \geq 0$ there exist  a rational function $G_{m,p}(z) = z^m + z^{m-1} c_1 + \ldots + c_m + \sum_{l = 1}^p c_{m+l} z^{-l}$ for some  $m \in \mathbb{N}$ and a constant $C_p$ such that
\begin{equation} \label{3.10}
   C_p = \sup_{x \in \R} |x^{p+1} (f(x) - G_{m,p}(x))| < \iy .
\end{equation}
Then for each zero $z_n \in \C_-$, $n = 1,2,\ldots$, of $f$ the following estimate holds true
\begin{equation} \label{3.11}
   |G_{m,p}(z_n))| \leq C_p |z_n|^{-p-1} e^{-\rho y_n} ,\qq y_n = \Im z_n .
\end{equation}
\end{lemma}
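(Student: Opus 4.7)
The plan is to apply a Phragm\'en--Lindel\"of/Plancherel--P\'olya argument to the auxiliary function
\begin{equation*}
   g(z) := z^{p+1}\bigl(f(z) - G_{m,p}(z)\bigr).
\end{equation*}
First I would verify that $g$ is entire and belongs to $Cart_\rho$. Since $G_{m,p}$ is rational with a single pole of order at most $p$ at the origin, the product $z^{p+1} G_{m,p}(z)$ is a polynomial (of degree $m+p+1$); hence $g(z) = z^{p+1} f(z) - z^{p+1} G_{m,p}(z)$ is the difference of an entire function and a polynomial, and therefore entire. Multiplication by $z^{p+1}$ and subtraction of a polynomial do not alter the exact types $\rho_{\pm}$, so $\rho_{\pm}(g) = \rho_{\pm}(f) = \rho$. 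Moreover, hypothesis (\ref{3.10}) gives directly $|g(x)| \leq C_p$ for all $x \in \R$, so the integrability condition (\ref{third}) is trivially satisfied and $g \in Cart_\rho$.

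The core step is to promote this real-line $L^\infty$ bound to a pointwise estimate on all of $\C$. For a Cartwright-class entire function with $\rho_+ = \rho_- = \rho$, the classical Cartwright indicator theorem yields $h_g(\theta) = \rho |\sin\theta|$; together with boundedness of $g$ on the real axis, a standard Phragm\'en--Lindel\"of argument (applied to $g(z) e^{\ii \rho z}$ in the upper half-plane and to $g(z) e^{-\ii \rho z}$ in the lower half-plane, both of which are then of zero exponential type and bounded on the boundary by $C_p$) produces
\begin{equation*}
   |g(z)| \leq C_p\, e^{\rho\, |\Im z|}, \qquad z \in \C.
\end{equation*}

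Finally, at any zero $z_n \in \C_-$ of $f$, the identity $f(z_n) = 0$ reduces $g(z_n)$ to $-\,z_n^{p+1}\, G_{m,p}(z_n)$. Substituting into the global bound above, and using $y_n = \Im z_n < 0$ (so that $|\Im z_n| = -y_n$), we obtain
\begin{equation*}
   |z_n|^{p+1}\, |G_{m,p}(z_n)| \;=\; |g(z_n)| \;\leq\; C_p\, e^{\rho |y_n|} \;=\; C_p\, e^{-\rho y_n},
\end{equation*}
which rearranges to the required estimate (\ref{3.11}). The only delicate step is the Phragm\'en--Lindel\"of upgrade, where the full Cartwright hypothesis $\rho_+(f) = \rho_-(f) = \rho$ must be used to control $g$ along the imaginary axis; this is a standard result in the Cartwright-class literature cited in Section~\ref{ss-C}, so no further work is needed beyond invoking it.
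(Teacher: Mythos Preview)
The paper does not supply its own proof of Lemma~\ref{L3.2}: the subsection opens by stating that the facts are taken from \cite{IantchenkoKorotyaev2014a, IantchenkoKorotyaev2014b} and \cite{Koosis1988}, and the proofs are explicitly skipped. Your argument is precisely the standard one found in those references: pass to the entire function $g(z)=z^{p+1}(f(z)-G_{m,p}(z))$, observe it is bounded on $\R$ by $C_p$, use the Cartwright indicator theorem together with Phragm\'en--Lindel\"of to upgrade this to $|g(z)|\le C_p e^{\rho|\Im z|}$ on all of $\C$, and then evaluate at a zero $z_n$ of $f$.

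Two small points of precision. First, you only need $\rho_\pm(g)\le\rho$ for the half-plane bound, and that inequality is immediate; the equality you assert is true but unnecessary. Second, the claim that $g(z)e^{\pm i\rho z}$ is ``of zero exponential type'' in the respective half-plane is exactly the content of Cartwright's theorem on the indicator (the a~priori exponential-type bound $|f(z)|\le Ce^{A|z|}$ for some possibly large $A$ does not by itself give this), so the invocation of the Cartwright-class hypothesis is essential there, as you correctly flag at the end. With those caveats, the proof is complete and matches what the cited sources do.
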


\begin{corollary} \label{Cor3.3.} 
Let $f \in{ Cart}_\rho$, $\rho > 0$. Let $z_n$, $n = 1,2,\ldots$, be the
zeros of $f$. 
\begin{itemize}
\item[(i)] Assume that $C_0 = \sup_{x \in \R} |x (f(x) - x^m -
x^{m-1} c_1 - \ldots - c_m)| < \iy$. Then each zero $z_n\in\C_-$, $n =
1,2,\ldots,$ satisfies
\begin{equation} \label{3.14a}
   |z_n (z_n^m + z_n^{m-1} c_1 + \ldots + c_m)| \leq C_0 e^{-\rho y_n} .
\end{equation}
\item[(ii)]  Assume that $C_1 = \sup_{x \in \R} |x^2 (f(x) - x^m -
x^{m-1} c_1 - \ldots - c_m-c_{m+1}x^{-1})| < \iy$. Then each zero $z_n\in\C_-$, $n =
1,2,\ldots,$ satisfies
\begin{equation} \label{3.14b}
   |z_n^2 (z_n^m + z_n^{m-1} c_1 + \ldots + c_m+c_{m+1} z_n^{-1})| \leq C_1 e^{-\rho y_n} .
\end{equation}
\end{itemize}
\end{corollary}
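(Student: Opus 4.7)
The plan is to deduce both parts of Corollary~\ref{Cor3.3.} as immediate specializations of Lemma~\ref{L3.2}, choosing the free parameter $p$ and the rational function $G_{m,p}$ to match each hypothesis.

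For part (i), I would apply Lemma~\ref{L3.2} with $p = 0$ and the rational function $G_{m,0}(z) = z^m + z^{m-1} c_1 + \ldots + c_m$ (no negative-power tail). The hypothesis $C_0 = \sup_{x \in \R}|x(f(x) - G_{m,0}(x))| < \infty$ is then exactly condition (\ref{3.10}) of the lemma with $p = 0$. Conclusion (\ref{3.11}) of the lemma reads $|G_{m,0}(z_n)| \leq C_0 |z_n|^{-1} e^{-\rho y_n}$ for each zero $z_n \in \C_-$, and multiplying through by $|z_n|$ yields (\ref{3.14a}).

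For part (ii), I would apply the same lemma with $p = 1$ and $G_{m,1}(z) = z^m + z^{m-1} c_1 + \ldots + c_m + c_{m+1} z^{-1}$. The stated hypothesis on $C_1$ matches condition (\ref{3.10}) with $p = 1$, so Lemma~\ref{L3.2} gives $|G_{m,1}(z_n)| \leq C_1 |z_n|^{-2} e^{-\rho y_n}$; multiplying by $|z_n|^2$ produces (\ref{3.14b}).

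The corollary is therefore a bookkeeping step: there is no genuine obstacle beyond correctly identifying $G_{m,p}$ and $p$ in each case. The substantive work lies upstream in Lemma~\ref{L3.2} itself (whose proof uses the Hadamard factorization (\ref{Hadamard}) for Cartwright class functions together with a Phragm\'en--Lindel\"of type bound in $\C_-$ to convert a real-axis decay rate of $f - G_{m,p}$ into an exponential bound at each lower half-plane zero). Assuming that lemma, nothing further needs to be verified: both (\ref{3.14a}) and (\ref{3.14b}) follow by the two choices of $(p,G_{m,p})$ indicated above.
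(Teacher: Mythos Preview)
Your proposal is correct and matches the paper's (implicit) approach: the corollary is stated immediately after Lemma~\ref{L3.2} with no separate proof, precisely because it is the specialization $p=0$ (for (i)) and $p=1$ (for (ii)) of that lemma, followed by clearing the $|z_n|^{-p-1}$ factor.
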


\begin{remark}
In the next section, we will prove results that imply that the (determinant of) the Jost function of the Rayleigh problem is in a Cartwright class. A function in such a class can be reconstructed from its zeros via the Hadamard factorization formula (\ref{Hadamard}), as was shown in the scalar Schr{\"o}dinger  case in \cite{Korotyaev2004}. In a forthcoming paper, we will analyze the inverse wavenumber resonances problem using this fact.
\end{remark}

\subsection{Analytic properties of the relevant entire functions and proof of the main results}\label{ss-A}

We now present the analytic properties of all the relevant entire functions in the analysis. Let \[\lb{functionF}F(\xi) = \Delta(\xi)\Delta(w_S(\xi))\Delta(w_{P}(\xi))\Delta(w_{PS}(\xi)),\,\,  \omega>0.\] We note that $F$ is entire on $\C$ with zeros which are projection on $\C$ of all wave number resonances. In this subsection, to adhere to standard notation, we will identify $\xi$ with $-\ii z$. Clearly, $z \in \C_+$ corresponds to $\{\xi,\,\,\Re\xi\geq 0\}$. The main result of this subsection is

\begin{theorem}[Cartwright property]\label{th-Cp}
The components of the the boundary matrix, the entire functions $\gamma_j(-\ii z)$, $j=1,\ldots,8$, are in Cartwright class ${Cart}_{2H}$. The entire functions $d_1(-\ii z)$, $d_2(-\ii z),$ $d_3(-\ii z),$ $d_4(-\ii z)$ and $\cS(-\ii z) = \cP(-\ii z)$ are in Cartwright class ${Cart}_{4H}$, and the entire function $F(-\ii z)$ is in Cartwright class ${Cart}_{8H}$.
\end{theorem}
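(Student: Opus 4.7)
The plan is to verify the three defining properties of $Cart_\rho$---entirety with exponential type, matching upper/lower exact type $\rho$, and logarithmic integrability on the real axis of $z$---for the $\gamma_j$ first, then inherit the statements for $d_k,\cS,\cP$ and $F$ from the fact that the Cartwright class is closed under products with additive exact type.

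\textbf{Bounded exponential type.} Entirety in $\xi$ of $\vartheta_P,\varphi_P,\vartheta_S,\varphi_S$ is Remark~\ref{r-entirecond}, so the $\gamma_j,d_k,\cS,\cP,F$ are entire. For the exponential bound I would iterate the Volterra equation~(\ref{V_P}) using Lemma~\ref{l-kernel} to obtain $|\bF^+(0,\xi)|\le C(1+|\xi|)^{N}\,e^{H\gamma(\xi)}$ uniformly on the Riemann surface, where $\gamma(\xi)\sim 2|\xi|$ on the unphysical sheets $\cR_{--},\cR_{-+}$ and vanishes on $\cR_{++},\cR_{+-}$. Transferring this bound through the Markushevich substitution~(\ref{MarkTr})---a first-order matrix differential operator with polynomial-in-$\xi$ coefficients---yields the same estimate (up to polynomial factors) for $\Psi^\pm(0,\xi)$ and $\partial_Z\Psi^\pm(0,\xi)$. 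Since $\vartheta_P=\tfrac12(\psi_P^++\psi_P^-)$ and $\varphi_P=(2q_P)^{-1}(\psi_P^+-\psi_P^-)$, together with their $S$-analogues, are single-valued entire on $\C$ (the $q$-dependent branches cancel between the two sheets of the $P$-covering), I obtain $|\gamma_j(\xi)|\le C(1+|\xi|)^{N}e^{2H|\xi|}$ on $\C$. As $2\times 2$ determinants in the $\gamma_j$, the $d_k,\cS,\cP$ are bounded by $C(1+|\xi|)^{N'}e^{4H|\xi|}$, and via the decomposition $\Delta=d_1+q_Pd_2+q_Sd_3+q_Pq_Sd_4$ and the analogous sign-changed expressions for $\Delta(w_\bullet)$, the entire product $F$ is bounded by $C(1+|\xi|)^{N''}e^{8H|\xi|}$.

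\textbf{Exact type and logarithmic integrability.} The logarithmic-integrability condition reduces to $\xi\in\ii\R$, where $q_P,q_S$ are real; here the Jost solutions are purely oscillatory and hence bounded, and by the asymptotic expansions of the two lemmas preceding the Cartwright subsection the $\gamma_j,d_k,\cS,\cP,F$ are polynomially bounded on $\ii\R$, so the integrals $\int_\R\log(1+|f(x)|)(1+x^2)^{-1}\,dx$ all converge. For the exact type I would examine $\xi=\pm y\in\R$ with $y\to\infty$: at the base of the slab $|\psi_{P,0}^+(-H,\pm y)|\sim e^{H|y|}$ (since $q_P\sim \ii|\xi|$), and propagation to $Z=0$ through the Rayleigh system (whose characteristic exponents are of order $|\xi|$) supplies a further factor $e^{H|y|}$, yielding $|\psi_{P,S}^+(0,\pm y)|\sim|y|\,e^{2H|y|}$. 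A direct expansion then gives $\rho_\pm(\gamma_j(-\ii z))=2H$, while $\rho_\pm=4H$ for the $d_k,\cS,\cP$ and $\rho_\pm=8H$ for $F$ follow provided the dominant exponentials in each product combine without leading-order cancellation.

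\textbf{Main obstacle.} The delicate step is this last nondegeneracy for $F$: one must verify that the four factors $\Delta(\xi),\Delta(w_P(\xi)),\Delta(w_S(\xi)),\Delta(w_{PS}(\xi))$ contribute complementary combinations of the growing modes $\psi_P^+,\psi_S^+$ at $Z=0$ which multiply to produce $e^{8H|\xi|}$ with a nonvanishing leading coefficient---schematically, one factor contributes polynomial growth, two contribute $e^{2H|\xi|}$ by promoting one of $\psi_P^-$ or $\psi_S^-$ to its $+$ partner, and one contributes $e^{4H|\xi|}$ by promoting both. I expect this to follow from the linear independence of the four Jost solutions at $Z=0$, which is encoded in the nondegeneracy of the Wronskian in Lemma~\ref{l-W}; the resulting leading coefficient is a polynomial in the boundary values of the Lam\'e parameters at $Z=0$ and $Z=-H$ which does not vanish identically, and so is nonzero everywhere except for an exceptional analytic set of configurations.
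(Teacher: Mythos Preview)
Your overall strategy---Volterra bounds for the global exponential estimate, polynomial behavior along $\xi\in\ii\R$ for logarithmic integrability, and multiplicativity for products---matches the paper's route. The gap is in your exact-type computation.

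The heuristic ``$|\psi_{P,0}^+(-H,\pm y)|\sim e^{H|y|}$, then propagation through the slab supplies another $e^{H|y|}$'' is not correct. First, with the branch choice $q_P(-y)\sim-\ii y$ one has $\psi_{P,0}^+(-H,-y)\sim e^{-Hy}$, not $e^{Hy}$: at negative real $\xi$ it is $\psi_P^-$ that carries the growth, not $\psi_P^+$. Second, and more seriously, in the unperturbed case ($V\equiv0$) the Jost solution $\psi_P^+$ continues as the pure mode $e^{-yZ}$, so $\psi_P^+(0,y)$ is polynomial, not $e^{2Hy}$; ``propagation'' alone never adds a factor $e^{H|y|}$. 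The $e^{2H|\xi|}$ growth enters only through the first Volterra correction $\int_0^H\bG(0,y)V(y)\bF_0^+(y,\xi)\,dy$: it is the Green's function that carries $e^{H\gamma(\xi)}\sim e^{2H|\xi|}$ on the relevant unphysical sheet (Lemma~\ref{l-kernel}), and it is switched off when $V\equiv0$. So the right picture is not ``base value times propagation'' but ``scattering by $V$ converts the decaying mode into the growing one once''.

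The paper handles this without such heuristics by working sheet by sheet: one uses $\psi_P^+(\cdot,\xi)=\psi_P^-(\cdot,w_P(\xi))$ (your (\ref{symmetry-1})) together with the Volterra estimate on $\cR_{-+}$, where $\gamma(\xi)\sim2|\xi|$, to read off the exponential behaviour of each building block of the $\gamma_j$. For the $d_k$ it then inverts the decomposition~(\ref{Rd}) to write
\[
4d_1=\Delta+\Delta(w_S)+\Delta(w_P)+\Delta(w_{PS}),\quad 4q_Pd_2=\Delta+\Delta(w_S)-\Delta(w_P)-\Delta(w_{PS}),\ \text{etc.},
\]
so that the type of each $d_k$ follows immediately from the types of $\Delta$ on the four sheets, without tracking possible cancellations among products of $\gamma_j$'s.

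Finally, your ``main obstacle''---nondegeneracy of the leading exponential in $F$---is not settled by the Wronskian of Lemma~\ref{l-W}: that Wronskian is computed in the lower half space and tells you the four Jost solutions are independent, but it does not prevent the leading coefficients of $\Delta(w_\bullet(\xi))$ from conspiring. The paper resolves this by computing those leading terms explicitly (the two Lemmas labelled ``Physical sheet $\cK_{+,+}$'' and ``Riemann surface'' preceding \S\ref{ss-C}), obtaining $\Delta\sim\ii A\xi^2$, $\Delta(w_{PS})\sim\ii A\xi^2$, $\Delta(w_P)\sim\ii C\xi^4$, $\Delta(w_S)\sim\ii C\xi^4$ with $A=-2\omega^2\mu(0)c(0)$ and $C=8\mu_0(G_{11}^H)^2$, so that $F\sim A^2C^2\xi^{12}$ with a coefficient that is manifestly nonzero under the ellipticity hypothesis. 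This explicit computation replaces your abstract nondegeneracy argument and is what you should invoke.
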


\begin{proof}
With (\ref{BFTheta}), it is sufficient to consider the asymptotics of the entries of $\bF_\Theta(\xi)$ as $|\xi| \rightarrow \infty$ for $\xi \in \R$ and $\xi \in \ii \R$. Property (\ref{first}) and the exact type $2H$ (property (\ref{second}) of the components of vector functions $\gamma_j$, $j=1,\ldots,8$, follow from the bounds and  the asymptotics of $\gamma$ as $\xi \rightarrow \pm\infty$ as $\xi \in \R$ which follow from Theorem~\ref{th-Jost-solutions}. Moreover, from (\ref{FThetaF}) we get the limits of the entries of $\mB$ as $\xi \rightarrow \pm\infty$: $\rho_+ = 0$ on the physical sheet ($\xi \rightarrow +\infty$), $\rho_- = 2H$ on the unphysical sheet ($\xi \rightarrow -\infty$). Now, taking the determinant of $\mB$, we get $\rho_+(\Delta (\xi)) = 0$, $\rho_-(\Delta (\xi)) = 4H$ and $\rho_+(\Delta (w_S(\xi)) = 4H$, $\rho_-(\Delta (w_S(\xi)) = 0$. As from $\Delta = d_1+q_Pd_2+q_Sd_3+q_Pq_Sd_4$ and the definitions of the maps $w_\bullet$ it follows that
$$\begin{array}{rl} 4d_1=&\Delta(\xi)+\Delta(w_S(\xi))+\Delta(w_P(\xi))+\Delta(w_{PS}(\xi)),\\
4q_Pd_2= &\Delta(\xi)+\Delta(w_S(\xi))-\Delta(w_P(\xi))-\Delta(w_{PS}(\xi)),\\
4q_Sd_3=&\Delta(\xi)-\Delta(w_S(\xi))+\Delta(w_P(\xi))-\Delta(w_{PS}(\xi)),\\
4q_Pq_Sd_4=&\Delta(\xi)-\Delta(w_S(\xi))-\Delta(w_P(\xi))+\Delta(w_{PS}(\xi)),
\end{array}$$
we find that $d_1,\ldots,d_4$ have exact type $4H$. As
\begin{align*}
   F = (d_1^2-q_P^2d_2^2-q_S^2d_3^2+q_P^2q_S^2   d_4^2)^2-4q_P^2q_S^2(d_1d_4-d_2d_3)^2 ,
\end{align*}
$F$ must have exact type $8H$. Moreover, as $\cS^2 = 4 (d_1 d_4-d_2 d_3)$, we find that $\cS^2 = \cP^2$ have exact type $8H$, so that $\cS = -\cP$ have exact type $4H$. 

To verify property (\ref{third}) for $F$, we use (\ref{Exp-1})-(\ref{Exp-0}), which imply that as $|\xi| \rightarrow \infty$, $\xi \in \ii\R$, 
\begin{align*}F=A^2C^2
\xi^{12}
+o(\xi^{12}).
\end{align*}
To verify property (\ref{third}) for the $\gamma_j$, $j=1,\ldots,8$, and for $d_1,\ldots,d_4$, we use the asymptotics of both $\bF^\pm(Z,\xi)$ and their derivatives with respect to $\xi$ implying at most polynomial growth as $|\xi| \rightarrow \infty$, $\xi \in \ii\R$, due to properties (\ref{symmetry-1}--\ref{symmetry-2}) and identifications (\ref{identification1}--\ref{identification2}).
\end{proof}

Finally, we show how the results on the distribution of wavenumber resonances in the complex plane, presented in the introduction, are obtained. Using that wavenumber resonances are identified with the zeros of $F$, we get these directly from the Cartwright character of function $F(-\ii z)$ proved in Theorem \ref{th-Cp} together with general  properties of the zeros of an entire function from a Cartwright class summarized in Section~\ref{ss-C}.

First, using  (\ref{sumcond}) we obtain (\ref{xisumcond}). Second, from Theorems~\ref {th-Levinson} and \ref{th-Cp} we immediately find that
\[\label{Weylas+-}
   \cN_+(r,F) = \cN_-(r,F)
   =\frac{8 H \, r}{\pi} (1 + o(1)),\qq r\rightarrow \infty ;
\] 
moreover we deduce (\ref{Weylas}). Third, if $\mu, \lambda \in C^N$ for some $N$ large enough, we use (\ref{Exp1})-(\ref{Exp2}) to conclude that, as $|\xi| \rightarrow \infty$, $\xi \in \ii\R$ and $\xi \in \cK_{++}$, \[\lb{ExpF}
   F(\xi) = A^2 C^2 (\xi^{12} - c_2 \xi^{10} + c_4 \xi^{8} - c_6 \xi^{6}
      + c_8 \xi^4 - c_{10} \xi^2 + c_{12}-c_{14}\xi^{-2}) + o(\xi^{-2}).
\]
Applying Corollary \ref{Cor3.3.} to $f = F$ with $z = \ii \xi$, $m = 12$ and using that $F$ is even, imply that each wavenumber resonance $\xi_n$, $n = 1,2,\ldots$, satisfies (\ref{3.14abisbis}). For the zeros of $\cS$, $\cP$, that is, the ``no-mode conversions wavenumbers'', we obtain similar observations. Here, we note that
$$
   \cP(\xi) = -\cS(\xi) = \ii C (-\ii \xi^3 - c_1 \xi^2 + c_2 \ii \xi
   + c_3) + o(1)
$$
as $|\xi| \rightarrow \infty$, $\xi \in \ii\R$ and $\xi \in \C$.

\section{Discussion}

We analyzed wavenumber resonances of the Rayleigh system on the appropriate Riemann surface. To this end, we introduced and studied Jost solutions, the associated boundary matrix and their analytic properties both on the physical and nonphysical sheets of the Riemann surface. Here, conjugation properties between sheets appeared to be important. The determinant of the boundary matrix is the Rayleigh determinant. We developed a representation for the Rayleigh resolvent admitting an analytic continuation, the poles of which coincide with the wavenumber resonances. Using the mentioned analytic properties, we then obtained an estimate for the distribution of the wavenumber resonances on the mentioned physical sheet as well as an asymptotic estimate for their counting function.

With the results presented here, we prepared the analysis of the associated inverse problems that will be developed in a forthcoming paper. We conjecture that the wavenumber resonances together with the no-mode-conversion wavenumbers determine the reflection matrix and that the reflection matrix determines the boundary matrix making use of the Hadamard factorizations for $F$ and $\cS = -\cP$. By (\ref{BFTheta}), the boundary matrix is directly related to the Jost function and this function determines the \textit{P}- and \textit{S}-wave speeds as functions of depth (the boundary normal coordinate) as we proved in a previous paper \cite{MdHoopIantchenko2022}.

\section*{Acknowledgments}

MVdH was supported by the Simons Foundation under the MATH $+$ X program, the National Science Foundation under grant DMS-1815143, and the corporate members of the Geo-Mathematical Imaging Group at Rice University.

\appendix

\section{Markushevich transform to two adjoint matrix Sturm-Liouville
  problems}
\label{sec:Mark}

We perform an analogue of the calibration transform on the Rayleigh system to obtain a matrix-valued (essentially non-diagonalizable) Sturm-Liouville problem. We follow \cite{ArgatovIantchenko2019}. Based on the Pekeris substitution \cite{Pekeris1934}, it was shown by Markushevich \cite{Markushevich1989, Markushevich1994} that the boundary value problem (\ref{Rayleighsystem}) with the Neumann boundary conditions (\ref{Rayleighboundary1})-(\ref{Rayleighboundary2}) can be reduced to two matrix Sturm-Liouville problems with mutually transposed potentials and boundary conditions. Here, we briefly review the transformations for arbitrary boundary values, $\chi_1, \chi_2$ for the traction instead of zero on the right-hand side of (\ref{Rayleighboundary1})-(\ref{Rayleighboundary2}), following very closely  our earlier work \cite{MdHoopIantchenko2022}. For conciseness of notation while suppressing the coordinate dual to $\xi$, in the remainder of the analysis, we use a ${}'$ to denote differentiation with respect to $x$.

Let $G$ be a $2 \times 2$-matrix solving the Cauchy problem,
\begin{equation}
   G' = \frac{1}{2} L G ,\quad G(0) = I_2 ,
\label{1rw(2.1)}
\end{equation}
where $I_2$ is the unit matrix, and 
\begin{equation}
   L = \left(\begin{array}{cc}
              0 & -d \\
             -c &  0 \end{array}\right)\quad\text{with}\quad
   c = \frac{1}{g_0} \frac{\mu (\lambda + \mu)}{(\lambda + 2 \mu)} ,\
   d =- 2 g_0 \biggl(\frac{1}{\mu}\biggr)'' .
\label{1rw(2.2)}
\end{equation}
We have $\det G(x) = 1,$ cf.~\cite{Markushevich1986}. 
We adopt the notation of Markushevich \cite{Markushevich1989}, where
$g_0$ stands for an arbitrary positive constant. It is convenient to
put $g_0 = \mu_0$, which we do from now onwards.
By the substitution ($x \in [0,\infty)$)
\begin{equation} \label{MPtr}
   \gM^{-1}(F) = \left(\begin{array}{c}
         w_1 \\
         w_2 \end{array}\right)
\end{equation}
with
\begin{equation}
  \gM^{-1}
  = \left(\begin{array}{cc}
          \displaystyle \dede{}{x} & 1 \\
          -\xi & 0 \end{array}\right)
    \left(\begin{array}{cc}
    \displaystyle \frac{\mu_0}{\mu} & 0 \\
    0 & \displaystyle \frac{\mu}{\lambda+2\mu} \end{array}\right)
    \bigl(G^{\rm T}\bigr)^{-1}
\label{1rw(2.10)new}
\end{equation}
signifying the \textit{inverse} Markushevich transform, the boundary value problem
(\ref{Rayleighsystem}) 
with the Neumann boundary conditions (\ref{Rayleighboundary1})-(\ref{Rayleighboundary2})
 reduces to the matrix
Sturm-Liouville form
\begin{eqnarray}
   F'' - \xi^2 F &=& Q F ,\quad\ \ \ x \in (0,\infty) ,
\label{1rw(2.11)new} \\
   F'\ + \Theta F &=& \bigl(D^{\rm a}\bigr)^{-1} \chi ,\quad
   x = 0 .
\label{1rw(2.12)new}
\end{eqnarray}
Here, $\Theta = \Theta(\xi) = (D^{\rm a}(\xi))^{-1} C^{\rm a}(\xi)$
with
\begin{multline}
   D^{\rm a}(\xi) = \left(\begin{array}{cc}
         \displaystyle -2 \mu_0 \frac{\mu'(0)}{\mu(0)} & \mu(0) \\
         -2 \mu_0 \xi & 0 \end{array}\right) ,
\\   
   C^{\rm a}(\xi) = \left(\begin{array}{cc}
     \displaystyle \mu_0 \biggl(2 \xi^2 - \frac{\omega^2}{\mu(0)}
                       + \frac{\mu''(0)}{\mu(0)}\biggr) &
     \displaystyle -\frac{\mu'(0) \mu(0)}{\lambda(0) + 2 \mu(0)} \\
     \displaystyle 2 \mu_0 \xi \frac{\mu'(0)}{\mu(0)} &
     \displaystyle -\xi \frac{\mu^2(0)}{\lambda(0) + 2 \mu(0)}
     \end{array}\right) ,
\label{1rw(2.14)new}
\end{multline}
so that 
\begin{equation} \label{Da}
  \left(D^{\rm a}(\xi)\right)^{-1}
  = \frac{1}{2 \mu_0 \mu \xi}
    \left(\begin{array}{cc} 0 & -\mu(0) \\
    2 \mu_0 \xi & \displaystyle -2 \mu_0 \frac{\mu'(0)}{\mu(0)} 
    \end{array}\right)
\end{equation}
and
\begin{align}
   \Theta(\xi) &= \left(\begin{array}{cc}
   \displaystyle -\frac{\mu'(0)}{\mu(0)} &
   \displaystyle \frac{1}{2 \mu_0}
        \frac{\mu^2(0)}{(\lambda(0)+2\mu(0))} \\
   \displaystyle \frac{\mu_0}{\mu(0)}
        \biggl(2 \xi^2 - \frac{\omega^2}{\mu(0)}
   - \mu(0) \biggl(\frac{1}{\mu}\biggr)''(0)\biggr) & 0
   \end{array}\right)
\label{1rw(2.15)new} \\
\nonumber \\
  &=: \ma -\theta_3 & \theta_2 \\ \\
      \displaystyle 2 \frac{\mu_0}{\mu(0)} \xi^2 - \theta_1 & 0 \am .
\nonumber
\end{align}
Furthermore, $Q$ is the matrix-valued potential given by
\begin{equation} \label{1rw(2.8)new}
   Q = (G^{-1} B G)^{\rm T} ,\quad B = B_1 + \omega^2 B_2 ,
\end{equation}
with
\begin{equation}
   B_1 = \left(\begin{array}{cc}
   \displaystyle -\frac{1}{2}\biggl(\frac{1}{\mu}\biggr)''
   \frac{\mu (\lambda + \mu)}{\lambda + 2 \mu}
   + \frac{\mu''}{\mu} &
   \displaystyle \mu_0\biggl(2\frac{\mu'}{\mu}
   \biggl(\frac{1}{\mu}\biggr)''
       + \biggl(\frac{1}{\mu}\biggr)'''\biggr)
   \\
   \displaystyle \frac{1}{\mu_0} \biggl(\frac{\lambda' \mu^2
   + \mu' \lambda (\lambda + \mu)}{(\lambda + 2 \mu)^2}
   - \frac{1}{2} \biggl(\frac{\mu (\lambda + \mu)}{
       \lambda + 2 \mu}\biggr)'\biggr) &
   \displaystyle \frac{1}{2} \biggl(\frac{1}{\mu}\biggr)''
     \frac{(\lambda - \mu)}{\lambda + 2 \mu}
   \end{array}\right) ,
\label{1rw(2.9a)new}
\end{equation}
\begin{equation}
   B_2 = \left(\begin{array}{cc}
     \displaystyle -\frac{1}{\mu} &
     \displaystyle \mu_0 \biggl(\frac{1}{\mu^2}\biggr)'
   \\
   \displaystyle 0 & \displaystyle -\frac{1}{\lambda + 2 \mu}
   \end{array}\right) .
\label{1rw(2.9b)new} 
\end{equation}
We note that the potential is not a symmetric matrix, that is, $Q \neq
Q^{\rm T}$.

By the adjoint substitution 
\begin{equation}
   \left(\gM^{\rm a}\right)^{-1}(F^{\rm a})
   = \left(\begin{array}{c}
         w_1 \\
         w_2 \end{array}\right)
\end{equation}
with
\begin{equation}
  \left(\gM^{\rm a}\right)^{-1}
  = \left(\begin{array}{cc}
    0 & -\xi \\ 1 & \displaystyle \dede{}{x}
    \end{array}\right)
    \left(\begin{array}{cc}
      1 & \displaystyle -2 \mu_0 \biggl(\frac{1}{\mu}\biggr)'
      \\
      0 & \displaystyle \frac{\mu_0}{\mu}\end{array}\right)
    G ,
\label{1rw(2.3)new}
\end{equation}
the boundary value problem
(\ref{Rayleighsystem}) 
with the Neumann boundary conditions (\ref{Rayleighboundary1})-(\ref{Rayleighboundary2})
 transforms to the matrix
Sturm-Liouville form
\begin{eqnarray}
  (F^{\rm a})'' - \xi^2 {F^{\rm a}} &=& Q^{\rm a} {F^{\rm a}} ,
  \quad x \in (0,\infty) ,
\label{1rw(2.4)new} \\
  (F^{\rm a})' + \Theta^{\rm a} {F^{\rm a}} &=&
           D^{-1} \chi ,\quad x = 0 .
\label{1rw(2.5)new}
\end{eqnarray}
Here,
\begin{equation}
    Q^{\rm a} =  Q^{\rm T} ,\quad
    \Theta^{\rm a} = \Theta^{\rm T} = \Theta^{\rm T}(\xi)
    = D^{-1}(\xi) C(\xi)
\end{equation}
is a $2 \times 2$-matrix with $D(\xi)$ and $C(\xi)$ being the
matrices,
\begin{multline}
   D(\xi) = \left(\begin{array}{cc}
       0 & -2 \xi \mu_0 \\[0.25cm]
       \mu(0) & 0 \end{array}\right) ,
\\
   C(\xi) = \left(\begin{array}{cc}
   \displaystyle -\xi \frac{\mu^2(0)}{(\lambda(0) + 2 \mu(0))} & 0
   \\
   -\mu'(0) & \displaystyle \frac{\mu_0}{\mu(0)}
   \biggl(2 \mu(0) \xi^2 - \omega^2 - 2 \frac{(\mu'(0))^2}{\mu(0)}
   + \mu''(0)\biggr) \end{array}\right) .
\label{1rw(2.7)new}
\end{multline}

\subsection*{Homogeneous half space, $x \in (H,\infty)$}

In components, (\ref{1rw(2.1)}) has the form 
\begin{equation}
   G_{11}' = -\frac{d}{2} G_{21} ,\quad
   G_{12}' = -\frac{d}{2} G_{22} ,\quad
   G_{21}' = -\frac{c}{2} G_{11} ,\quad
   G_{22}' = -\frac{c}{2} G_{12} ,
\label{1rw(3.1)new}
\end{equation}
in which, in view of (\ref{1rw(2.2)}), the coefficient $d$ is zero if
$\mu$ is constant. 
We consider the (homogeneous) half space $x \in (H,\infty)$ and write
\begin{equation}
   G_{11}(H) = G_{11}^H ,\quad
   G_{12}(H) = G_{12}^H ,\quad
   G_{21}(H) = G_{21}^H ,\quad
   G_{22}(H) = G_{22}^H .
\label{1rw(3.6)new}
\end{equation}
Then the matrix function, $G$, inside $x \in (H,\infty)$ can be
determined from the Cauchy problem
\begin{equation}
   G' = -\frac{c_0}{2} \left(\begin{array}{cc}
            0 & 0 \\[0.25cm] G_{11} & G_{12} \end{array}\right) ,\quad
   G(H) = \left(\begin{array}{cc}
            G_{11}^H & G_{12}^H \\[0.25cm]
            G_{21}^H & G_{22}^H \end{array}\right) ,
\label{1rw(3.7)new}
\end{equation}
in which
\begin{equation}
   c_0 = \frac{\lambda_0 + \mu_0}{\lambda_0 + 2 \mu_0} .
\label{1rw(3.8)new}
\end{equation}
The solution is
\begin{equation}
\begin{array}{c}
   \displaystyle \phantom{{}_{\Bigr)}}
   G_{11}(x) = G_{11}^H ,\quad G_{12}(x) = G_{12}^H ,
   \phantom{{}_{\Bigr)}} \\
   \displaystyle 
   G_{21}(x) = -\frac{c_0}{2} G_{11}^H(x - H) + G_{21}^H ,\quad
   G_{22}(x) = -\frac{c_0}{2} G_{12}^H(x - H) + G_{22}^H .
\end{array}
\label{1rw(3.9)new}
\end{equation}
As $\det G(x) = 1$ (see \cite{Markushevich1986, Markushevich1989}), the inverse matrix follows to be
\begin{equation}
   G^{-1} = \left(\begin{array}{rr}
                G_{22} & -G_{12} \\[0.25cm]
               -G_{21} & G_{11} \end{array}\right) .
\label{1rw(3.10)new}
\end{equation}
Thus, in the homogeneous elastic half space, $x \in (H,\infty)$,
according to (\ref{1rw(2.8)new})-(\ref{1rw(2.9b)new}) and
(\ref{1rw(3.10)new}), we have
\begin{equation} \label{1rw(3.11)new}
   Q = \omega^2 \left(\begin{array}{cc}
   \displaystyle \frac{G_{12} G_{21}}{\lambda_0 + 2 \mu_0}
   - \frac{G_{11}G_{22}}{\mu_0} &
   \displaystyle G_{11} G_{21} \frac{c_0}{\mu_0}
   \\[0.25cm]
   \displaystyle -G_{12} G_{22} \frac{c_0}{\mu_0} &
   \displaystyle \frac{G_{12} G_{21}}{\mu_0}
                 - \frac{G_{11} G_{22}}{\lambda_0 + 2 \mu_0}
   \end{array}\right) ,
\end{equation}
where the components of the transformation matrix $G$ are given by
(\ref{1rw(3.9)new}). It is of interest to observe that if $G_{12}^H
\not= 0$, then all components of the potential matrix, $Q$, will have
linear growth as $x \to \infty$.

In the main text, we denote $Q(x)$ for $x \geq H$ by $Q_0(x)$.
Using, again, that $\det G(x) = 1$, we obtain
\begin{align}
  Q_0(x) =&
  \omega^2 \ma \displaystyle -\frac{1}{\mu_0} & 0 \\
  0 & \displaystyle -\frac{1}{\lambda_0 + 2 \mu_0} \am
  + \omega^2 \frac{\lambda_0 + \mu_0}{\mu_0 (\lambda_0 + 2 \mu_0)}
  \ma -G_{12} G_{21} & G_{21}G_{11} \\[0.25cm]
      -G_{12} G_{22} & G_{12}G_{21} \am
\label{Q0} \\[0.25cm]
  =& \omega^2 \ma \displaystyle - \frac{1}{\mu_0} & 0 \\
  0 & \displaystyle -\frac{1}{\lambda_0 + 2 \mu_0} \am
\nonumber\\[0.25cm] &\qquad
  + \omega^2\frac{c_0}{\mu_0}
  \ma \displaystyle
  -G_{12}^H \left[-\frac{c_0}{2}G_{11}^H(x - H) + G_{21}^H\right] &
  \displaystyle 
  G_{11}^H \left[-\frac{c_0}{2} G_{11}^H(x - H) + G_{21}^H\right]
  \\[0.25cm]
  \displaystyle
  -G_{12}^H \left[-\frac{c_0}{2} G_{12}^H(x - H) + G_{22}^H\right] &
  \displaystyle 
  G_{12}^H \left[-\frac{c_0}{2} G_{11}^H(x - H) + G_{21}^H\right]
  \am .
\nonumber
\end{align}
We extend $Q_0 = Q_0(x)$ to $x \in (0,H]$ linear in $x$, and refer to
it as the background potential. Then we introduce the perturbation
potential $$V(x) = Q(x) - Q_0(x)$$ so that $V(x) = 0$ for $x \geq H$.

\bibliographystyle{ws-m3as}

\begin{thebibliography}{10}
\newcommand{\enquote}[1]{#1}

\bibitem{ArgatovIantchenko2019}
I.Argatov and A.Iantchenko, \enquote{Rayleigh surface waves in functionally
  graded materials -- long-wave limit}, {\it The Quarterly Journal of Mechanics
  and Applied Mathematics} \textbf{72} (2019) 197--211.

\bibitem{BerezinShubin1991}
F.~Berezin and M.~Shubin, {\it The {S}chr{\"o}dinger equation} (Kluwer. Acad.
  Pub., The Netherlands, 1991).

\bibitem{Chapman1972}
C.~Chapman, \enquote{Lamb's {P}roblem and {C}omments on the {P}aper '{O}n
  {L}eaking {M}odes' by {U}sha {G}upta}, {\it Pure and Applied Geophysics}
  \textbf{94} (1972) 233--247.

\bibitem{Chen1991}
Y.-C. Chen, \enquote{On strong ellipticity and the {L}egendre-{H}adamard
  condition}, {\it Archive for Rational Mechanics and Analysis} \textbf{113}
  (1991) 165--175.

\bibitem{Christiansen2005}
T.~Christiansen, \enquote{Resonances for steplike potentials: forward and
  inverse results.}, {\it Trans. Amer. Math. Soc.} \textbf{358} (2005)
  2071--2089.


\bibitem{CohenKappeler1985}
A.~Cohen and T.~Kappeler, \enquote{Scattering and inverse scattering for
  steplike potentials in the {S}chr{\"o}dinger equation}, {\it Indiana Univ.
  Math.J.} \textbf{34} (1985) 127--180.

\bibitem{MdHoopIantchenko2022}
M.~de~Hoop and A.~Iantchenko, \enquote{Inverse problem for the {R}ayleigh
  system with spectral data}, {\it Journal of Mathematical Physics} \textbf{63}
  (2022) 031505.

\bibitem{dHINZ}
M.~de~Hoop, A.~Iantchenko, G.~Nakamura and J.~Zhai, \enquote{Semiclassical
  analysis of elastic surface waves}, {\it arXiv preprint arXiv:1709.06521} .

\bibitem{CdV2006}
Y.~C. de~{V}erdi\`ere, \enquote{Elastic wave equation}, in {\it S{\'e}minare de
  th{\'e}orie spectrale et g{\'e}om{\'e}trie} (Institut Fourier, Grenoble I,
  2006--2007), volume~25, pp. 55--69.

\bibitem{DermenjianGaitan2000}
Y.~Dermenjian and P.~Gaitan, \enquote{Study of {G}eneralized {E}igenfunctions
  of a {P}erturbed {I}sotropic {E}lastic {H}alf-space}, {\it Math.Methods in
  Appl.Scien.} \textbf{23} (2000) 685 --708.

\bibitem{DyatlovZworski2022}
S.~Dyatlov and M.~Zworski, {\it Mathematical {T}heory of {S}cattering
  {R}esonances}, Graduate Studies in Mathematics 200 (AMS, 2022).






\bibitem{Gupta1970}
U.~Gupta, \enquote{On {L}eaking {M}odes}, {\it Pure Appl. Geophys} \textbf{80}
  (1970) 27--37.

\bibitem{Haddon1984}
R.~Haddon, \enquote{Computation of {S}ynthetic {S}eismograms in {L}ayered
  {E}arth {M}odels {U}sing {L}eaking {M}odes}, {\it Bulletin of the
  Seismological Socity of America} \textbf{74} (1984) 1225--1248.

\bibitem{Haddon1986}
R.~Haddon, \enquote{Exact {E}valuation of the {R}esponce of a {L}ayered
  {E}lastic {M}edium to an {E}xplosive {P}oint {S}ource {U}sing {L}eaking
  {M}odes}, {\it Bulletin of the Seismological Socity of America} \textbf{76}
  (1986) 1755--1775.

\bibitem{Haddon1987}
R.~Haddon, \enquote{A simple exact method for {G}reen's functions for {S}{H}
  motion in a layered elastic medium using leaking modes}, {\it Geophys. J.R.
  astr. Soc.} \textbf{88} (1987) 379--392.

\bibitem{HarrisAchenbach2002}
J.~Harris and J.~D. Achenbach, \enquote{Comment on ``{O}n the complex conjugate
  roots of the {R}ayleigh equation: {T}he leaky surface wave'' [{J}. {A}coust.
  {S}oc. {A}m. 110, 2867 (2001)] ({L})}, {\it J. Acoust. Soc. Am.} \textbf{112}
  (2002) 1747--1748.

\bibitem{IantchenkoKorotyaev2013a}
A.~Iantchenko and E.~Korotyaev, \enquote{Resonances for the {D}irac operators
  on the half-line}, {\it J. of Math. Anal. Appl.} \textbf{420} (2014)
  279--313.

\bibitem{IantchenkoKorotyaev2014a}
A.Iantchenko and E.Korotyaev, \enquote{Resonances for 1{D} massless {D}irac
  operators}, {\it J. of Diff. Eq.} \textbf{256} (2014) 3038.

\bibitem{IantchenkoKorotyaev2014b}
A.Iantchenko and E.Korotyaev, \enquote{Resonances for the {D}irac operators on
  the half-line}, {\it J. of Math. Anal. Appl.} \textbf{420} (2014) 279.


\bibitem{Iantchenko2022}
A.Iantchenko, \enquote{Semiclassical inverse spectral and resonance problems in semiclassical surface-wave tomography: Lectures at l'IHP, Paris, 20-21 June 2022},  \href{https://mau.diva-portal.org/smash/get/diva2:1682834/FULLTEXT01.pdf}{fulltext}.

\bibitem{Koosis1988}
P.~Koosis, {\it The logarithmic integral {I}} (Cambridge University Press,
  1988).

\bibitem{Korotyaev2004}
E.~Korotyaev, \enquote{Inverse resonance scattering on the half line}, {\it
  Asymptot. Anal.} \textbf{37} (2004) 215--226.

\bibitem{Korotyaev2005}
E.~Korotyaev, \enquote{Inverse resonance scattering on the real line}, {\it
  Inverse Problems} \textbf{21} (2005) 325--341.

\bibitem{Lietal2021}
Z.~Li, C.~Shi, H.~Ren and X.~Chen, \enquote{Multiple {L}eaking {M}ode
  {D}ispersion {O}bservations and {A}pplications {F}rom {A}mbient {N}oise
  {C}ross-{C}orrelation in {O}klahoma}, {\it Geophysical Research Letters}
  \textbf{49}, doi:10.1029/2021GL096032.

\bibitem{Lodgeetal1999}
R.~Lodge, G.~Steblov and D.~Gubbins, \enquote{Fundamental leaking mode ({PL})
  propagation along the {T}onga-{K}ermadec-{H}ikurangi-{M}acquarie margin},
  {\it Geophys. J. Int.} \textbf{137} (1999) 675--690.

\bibitem{Markushevich1986}
V.M.Markushevich, \enquote{The determination of elastic parameters of a
  half-space using a monochromatic vibration field at the surface}, {\it Wave
  Motion} \textbf{9} (1987) 37--49.

\bibitem{Markushevich1989}
V.M.Markushevich, \enquote{Pekeris substitution and some spectral properties of
  the {R}ayleigh boundary problem}, {\it Comput. Seismol} \textbf{22} (1989)
  117--126.

\bibitem{Markushevich1994}
V.M.Markushevich, \enquote{Representation of matrix potentials in the
  {R}ayleigh wave equation by a symmetric matrix}, {\it Computational
  Seismology and Geodynamics} \textbf{1} (1992) 70--73.

\bibitem{Pekeris1934}
C.L.Pekeris, \enquote{An inverse boundary value problem in seismology}, {\it
  Physics} \textbf{5} (1934) 307--316.


\bibitem{Phinney1961b}
R.~Phinney, \enquote{Leaking {M}odes in the {C}rustal {W}aveguide. {P}art 1.
  {T}he {O}ceanic {PL} {W}ave}, {\it J. Geoph. Res.} \textbf{66} (1961)
  1445--1469.

\bibitem{Phinney1961c}
R.~Phinney, \enquote{Propagation of {L}eaking {I}nterface {W}aves}, {\it Bull.
  of the Seism. Soc. of Am.} \textbf{51} (1961) 527--555.

\bibitem{Phinney1961}
R.~Phinney, \enquote{Propagation of {L}eaking {M}odes in a plane {S}eismic
  {W}aveguide}, Ph.D. thesis, Caltech, Division of Geological Sciences, 1961.

\bibitem{Pilant1972}
W.~Pilant, \enquote{Complex {R}oots of the {S}toneley-{W}ave {E}quations}, {\it
  Bull. of the Seism. Soc. of Am.} \textbf{62} (1972) 285--299.

\bibitem{Rayleigh1885}
J.~Rayleigh, \enquote{On waves propagated along the plane of an elastic solid},
  {\it Proc. London Math. Soc.} \textbf{17} (1885) 4--11.

\bibitem{SchroderScott2001}
C.~Schr{\"o}der and W.~R. Scott-Jr, \enquote{On the complex conjugate roots of
  the {R}ayleigh equation: {T}he leaky surface wave}, {\it J. Acoust. Soc. Am.}
  \textbf{110} (2001) 2867--2877.

\bibitem{Secher1998}
P.~S{\'e}cher, \enquote{{\'E}tude spectrale du syst{\`e}me diff{\'e}rentiel
  $2\times 2$ associ{\'e} {\`a} un probl{\`e}me d'{\'e}lasticit{\'e}
  lin{\'e}are}, {\it Ann. Fac. Sc. Toulouse} \textbf{7} (1998) 699--726.

\bibitem{Stickler1986}
D.~C. Stickler, \enquote{Inverse {S}cattering for {S}tratified {E}lastic
  {M}edia}, {\it Wave Motion} \textbf{8} (1986) 101--112.



\bibitem{Watson1972}
T.~Watson, \enquote{A {R}eal {F}requency, {C}omplex {W}ave-number {A}nalysis of
  {L}eaking {M}odes}, {\it Bull. of Seism. Soc. of Am.} \textbf{62} (1972)
  369--384.

\bibitem{WuChen2017}
B.~Wu and X.~Chen, \enquote{Accurate computation of leaky modes for anomalous
  layered models}, {\it Annals of Geophysics} \textbf{60}, doi:10.4401/ag-7477.

\bibitem{Zworski1987}
M.~Zworski, \enquote{Distribution of poles for scattering on the real line},
  {\it J. Funct. Anal.} \textbf{73} (1987) 277--296.

\end{thebibliography}

\end{document}